\documentclass[11pt, reqno]{amsart}

\usepackage[top=1in,bottom=1in,left=1.2in,right=1.2in]{geometry}

\usepackage{appendix}
\usepackage{amsmath,amssymb,amsthm,amsfonts,enumerate, mathrsfs, mathtools}
\usepackage{relsize}
\usepackage{stmaryrd}
\usepackage{esint}
\usepackage{xcolor}
\usepackage{graphicx}
\usepackage{tikz}
\usetikzlibrary{patterns,positioning,arrows,shapes,trees}

\usepackage{url}
\usepackage[
colorlinks=true,
linkcolor=blue,
filecolor=blue,
urlcolor=red,
citecolor=magenta]{hyperref}

\usepackage{fancyhdr}
{}

\makeatletter
\newcommand{\mylabel}[2]{#2\def\@currentlabel{#2}\label{#1}}
\makeatother

\newcounter{counterConstant}
\newcommand{\const}[1]{
	\addtocounter{counterConstant}{1}
	\edef#1{\arabic{counterConstant}}
}

\numberwithin{equation}{section}
\def\theequation{\arabic{section}.\arabic{equation}}

\newtheorem{theorem}{Theorem}[section]
\newtheorem{lemma}[theorem]{Lemma}
\newtheorem{remark}[theorem]{Remark}
\newtheorem{definition}[theorem]{Definition}
\newtheorem{proposition}[theorem]{Proposition}

\newtheorem{assumption}{Assumption}

\def\cB{{\mathcal B}}
\def\cC{{\mathcal C}}

\def\cE{{\mathcal E}}
\def\cF{{\mathcal F}}

\def\cM{{\mathcal M}}

\def\cP{{\mathcal P}}

\def\cX{{\mathcal X}}

\def\mN{{\mathbb N}}

\def\mR{{\mathbb R}}

\def\mZ{{\mathbb Z}}

\def\bE{{\mathbf E}}

\def\bP{{\mathbf P}}

\def\sB{{\mathscr B}}

\def\l{\left}
\def\r{\right}
\def\<{\langle}
\def\>{\rangle}

\def\geq{\geqslant}
\def\leq{\leqslant}
\def\ge{\geqslant}
\def\le{\leqslant}

\def\1{{\mathbf{1}}}
\def\p{\partial}
\def\d{\mathrm{d}}
\def\e{\mathrm{e}}
\def\eps{\varepsilon}

\allowdisplaybreaks

\def\R{\mR}

\usepackage{soul}

\begin{document}
	
\title{Persistence and local extinction for superprocesses in random environments}
	
\author{Zhen-Qing Chen, \ Yan-Xia Ren \ and \   Guohuan Zhao}
	
\address{Department of Mathematics, University of Washington, Seattle, WA 98195, USA. }
\email{zqchen@uw.edu}

\address{LMAM School of Mathematical Sciences \& Center for
		Statistical Science, Peking University, Beijing, 100871, P.R. China.}
\email{yxren@math.pku.edu.cn}
	
\address{State Key Laboratory of Mathematical Sciences, Academy of Mathematics and Systems Science, CAS, Beijing, 100190, China}
\email{gzhao@amss.ac.cn}
	
\thanks{ The research of Zhen-Qing Chen is supported in part by a Simons Foundation grant.
The research of Yan-Xia Ren is supported by National Natural Science Foundation of China (No. 12231002)  and the Fundamental Research Funds for Central Universities, Peking University LMEQF. The research of Guohuan Zhao is supported by the National Key Research and Development Program of China (No. 2024YFA1013503) and the NSFC grants (Nos. 12271352, 12201611).}
	
\begin{abstract}
	We consider a super-Brownian motion $\{X_t, t\geq 0\}$ in a random environment described by a centered Gaussian field
	$\{W(t,x),t\geq 0, x\in\mathbb{R}^d\}$
	whose correlation function is given by $\mathcal{C} (x,y)(t\wedge s)$.  The process takes values in ${\mathcal M}(\mR^d)$, the space of Radon measures on $\mR^d$.  It can be characterized through  a conditional Laplace transform by a parabolic stochastic partial differential equation driven by $W(t, x)$.
	Suppose that $\mathcal{C} (x, y)\leq g(x-y)$ for some  bounded positive function $g$ on $\R^d$   and the initial distribution of
	process $X$ is the Lebesgue measure $m$ on $\mathbb{R}^d$. We  prove  that for dimension  $d\geq 3$, whenever
	$$
		\sup_{x\in \mathbb{R}^d} \int_{\mathbb{R}^d} |x-y|^{2-d} g(y)dy<  \frac{8 (d-2) \pi^{d/2}}{d 2^d \Gamma \left(d/2-1\right)},
	$$
	the distribution of $X_t$ converges weakly as $t \to \infty$ to a non-trivial invariant probability distribution $\pi^m$ on $\mathcal{M}(\mathbb{R}^d)$ with mean measure $m$. This result in particular gives an affirmative answer to {\bf Conjecture 1.4 } of  \cite{MX2007local}. We further show that given $ 
	\Theta \in C^\beta(\mathbb{R}^d)$ $(\beta>1)$, when $\cC (x,y)= a  
	 \Theta (x-y)$ with $a$ being large enough, the superprocess $X$ suffers local extinction.
	
\bigskip
 
\noindent \textbf{Keywords}: Superprocess,  Random environment, SPDE,  Parabolic Anderson model,  Persistence, Extinction
	
\medskip
	
\noindent  {\bf AMS 2020 Mathematics Subject Classification:} 60J68, 60K37, 60H15

\end{abstract}
	
\maketitle
\tableofcontents

\section{Introduction}

The purpose of this work is twofold. Firstly, we give an affirmative answer to Conjecture 1.4  of  \cite{MX2007local} about the persistence property of $d$-dimensional
super-Brownian motion  with $d\geq 3$
 in ``weak" random environment constructed in \cite{Myt1996environments} and \cite{MX2007local}. Secondly, we sought to investigate which environmental properties influence and change the persistence/extinction properties of the same process.
	
\smallskip
	
Let us briefly review the probabilistic models with which we are concerned. In \cite{Myt1996environments}, Mytnik  introduced a family of  superprocesses in random environments. Let $\{\xi_k(\cdot): k\in\mathbb{N}\}$ be a sequence of independent identically distributed random fields on $\mR^d$ with
\begin{equation*}
	\bE \left[ \xi_k(x) \right] =0 \ \hbox{ and } \   \sup_{x\in \mR^d}\bE \left[ |\xi_k(x)|^3 \right] <\infty
	\quad \hbox{for every } x\in \mathbb{R}^d     \hbox{ and }       k\in\mathbb{N}.
\end{equation*}
It follows that the   correlation function
$$\cC (x, y):=\bE \left[ \xi_k(x)\xi_k(y) \right]$$ is bounded on $\R^d\times \R^d$. The random fields $\{\xi_k(x): x\in \mR^d, k\in\mN\}$ serve as random environments for the following stochastic models. For each fixed $n\in \mathbb{N}$, define
$$
\xi_k^{(n)}(x):=\left\{\begin{array}{ll} \sqrt{n},\quad & \xi_k(x)>\sqrt{n},\\
-\sqrt{n}, & \xi_k(x)<-\sqrt{n},\\
\xi_k(x), & \mbox{otherwise}.\end{array}\right.
$$
Consider a particle system in which there are $K_n$ particles  in $\mR^d$, each of them moves independently  as a Brownian motion until time $t=1/n$. Given $\{\xi_k(x): x\in \mR^d, k\in\mN\}$, at time $1/n$, 
each particle located at $x$
splits into two particles with probability $\frac{1}{2}+\frac{1}{2\sqrt{n}} 
	\xi_1^{(n)}(x)$
	or dies with probability
	$\frac{1}{2}-\frac{1}{2\sqrt{n}} 
	\xi_1^{(n)}(x)$.
	The new particles then moves in space independently as  standard  Brownian motions starting at their places of birth,
	during the time interval $[1/n, 2/n)$. In general,  at time $i/n$, each
	surviving  particle  located at $x$ splits into two particles with probability
	$\frac{1}{2}+\frac{1}{2\sqrt{n}} \xi_i^{(n)}(x)$
	or dies with probability
	$\frac{1}{2}-\frac{1}{2\sqrt{n}} \xi_i^{(n)}(x)$,
	and in the time interval $[i/n, (i+1)/n)$ particles independently according to Brownian motions.
	Let $X^n_t$ be the empirical measure-valued Markov process, defined as
	$$
	X^n_t(A)=\frac{\hbox{number of particles in }A \hbox{ at time }t} {n}  \quad \hbox{ for }  A \in \sB( \mR^d),
	$$
	where $\cB(\R^d)$ denotes the Borel $\sigma$-field on $\R^d$.
	It is proved by Mytnik \cite[Theorem 3.1]{Myt1996environments} that if $X^n_0$ converge weakly to a finite measure $\nu$
	on $\mR^d$, then the processes $X^n=\{X^n_t, t\ge 0\}$ converges weakly to a  finite-measure-valued process
	$X=\{X_t,t\ge 0\}$, where $X$ is the unique solution to the following martingale problem:
	for every $f\in C^2_c(\mR^d)$,
	\begin{equation} \label{eq:MP}
		\begin{aligned}
			& M_t^f   := \langle f, X_t\rangle-\langle f, \mu\rangle-\frac{1}{2}\int_0^t \l\langle \Delta f,  X_s\r\rangle \d s
			\ \mbox{ is a continuous martingale}\\
			&\mbox{ with quadratic variation }
			\langle M_t^f \rangle_t= \int_0^t \langle f^2, X_s\rangle \d s+ \int_0^t \langle \cC\cdot f\otimes f, X_s\otimes X_s\rangle \d s,
		\end{aligned} \tag{{\bf MP}}
	\end{equation}
	where $(f\otimes h)(x,y): =f(x)h(y)$  and $C^2_c(\R^d)$ is the space of twice continuously differentiable functions with compact support.
	 Later, Mytnik-Xiong \cite{MX2007local} introduced  a conditional martingale problem  with noise $W$, and characterized the conditional Laplace transform of
	 solutions of \eqref{eq:MP}.
	The  random noise  $W$ is a Gaussian field with zero mean that is white in time and colored in space with
	covariance function
	$$
	    \bE\left[ W(s,x)W(t,y) \right]=\cC (x,y)(s\wedge t).
	$$

\medskip

For $\rho >0$, set
\begin{equation}\label{def-phi}
    \phi_\rho(x):=(1+|x|^2)^{-\rho/2}.
\end{equation}
Denote by $\cM(\mR^d)$  the space of Radon measures on $\mR^d$ equipped with vague topology  and
$$
\cM_\rho(\mR^d):=\{\mu\in \cM(\mR^d): \langle \phi_\rho, \mu\rangle<\infty\}.
$$	

The following definition of conditional martingale problem is essentially taken from  \cite{MX2007local}.
\begin{definition}\label{D:1.1}
Let $X$ be a measure-valued process on $\mR^d$ defined on the probability filtered
space $(\Omega,\mathcal{F},  \mathcal{F}_t, \bP)$   so that $X_t\in \cF_t$ for every $t\geq 0$ and  $W=\{W (t, x); t>0, x\in \mR^d\}$  is
a centered Gaussian field defined on $(\Omega,\mathcal{F},  \bP)$ with
		$$
		 \bE \l[W(s,x)W(t,y)\r] =\cC (x,y) (s\wedge t) \quad\mbox{ for any }s,t\geq 0 \mbox{ and x,y }\in \mR^d.
		$$
Let $\cF^W_\infty = \sigma(\{W(s,x) : s \geq 0,x \in\R^d\})$.
Denote by $\bP^W$ the conditional probability given $W$ and define $\mathcal{G}_t=\mathcal{F}_t\vee\mathcal{F}_\infty^W$. 
For $\mu\in \cM(\mR^d)$, we say $X$ is a solution to  conditional martingale problem with respect to $W$ with initial value $X_0 = \mu$ if for every $f\in C^2_c (\mR^d)$,
		\begin{equation}\label{eq:CMP}
			\begin{aligned}
				&N_t^f=\langle f, X_t \rangle-\langle f, \mu\rangle- \frac{1}{2} \int_0^t\langle \Delta f, X_s\rangle \d s-\int_0^t\!\!\int_{\mR^d}f(x)W(\d s,x)X_s(\d x) \\
				&\mbox{ is a continuous }
				\{\mathcal{G}_t\}\mbox{-martingale under $\bP^W$ with quadratic variation }\\
				&\langle N^f\rangle _t = \int_0^t \langle f^2, X_s\rangle \d s  \  \hbox{ and } \  N_0^f=0.
			\end{aligned}\tag{{\bf CMP}}
		\end{equation}
	\end{definition}

\begin{remark} \label{rem:well-posedness} \rm
 The well-posedness of the martingale problem \eqref{eq:MP} for finite initial measures is established in  \cite[Theorem 3.1]{Myt1996environments}.
	Although it is claimed on p.1960 of \cite{Myt1996environments} that the above can be extended to the case where the initial measure is the Lebesgue
	measure on $\R^d$, which is  subsequently utilized in the proof of \cite[Corollary 2.4]{MX2007local}, 	
	a rigorous treatment of the well-posedness of the martingale problem \eqref{eq:MP} for infinite initial measures  is  
	currently still lacking in the literature. 
	Specifically, the uniqueness proof in \cite[Section 4]{Myt1996environments} constructs a dual process taking values in $C_0(\mR^d)$ rather than $C_c(\mR^d)$,
	which works only for  the finite initial measure case. Here $C_0(\R^d)$ (resp. $C_c(\R^d)$) is the space  of all continuous functions on $\R^d$
	that vanishes at infinity (resp. have compact support).
	To accommodate infinite initial measures, the framework must be refined by restricting the initial measures to a suitable class, such as $\cM_\rho(\mR^d)$.
	In this paper, we address these gaps in literature as follows:
    \begin{enumerate}[\rm (i)]
    \item In Appendix \ref{Sec:App-MP}, we provide a proof for the uniqueness of solutions to \eqref{eq:MP} for
     any initial measure in $\cM_\rho(\mR^d)$ with $\rho>0$, which contains the Lebesgue measure on $\R^d$ as a particular member. 
      This validates  the proof of and hence the application of
    \cite[Corollary 2.4]{MX2007local} in this setting.

    \item In Proposition \ref{prop:CMP}, we directly establish the  existence and uniqueness for solutions to \eqref{eq:CMP} (viewed as \(\cM_{\rho}(\mR^d)\)-valued continuous processes), without resorting a priori to
    the equivalence between \eqref{eq:MP} and \eqref{eq:CMP}  with infinite initial measures in $\cM_\rho(\mR^d)$.
    \end{enumerate}
\end{remark}

Using \cite[Lemma 2.3 and Corollary 2.4]{MX2007local} and Proposition \ref{prop:CMP} below, we see that both \eqref{eq:MP} and \eqref{eq:CMP} are well-posed for initial measures in $\cM_\rho(\mathbb{R}^d)$, and the two problems are equivalent in the sense that the law of the solution to one problem coincides with that of the other.
 In view of these equivalences, and noting that solutions to \eqref{eq:MP} arise as scaling limits of branching Brownian motion in random environments, we shall henceforth refer to any solution of \eqref{eq:MP} or \eqref{eq:CMP} as a super-Brownian motion in random environment (SBMRE).

\medskip	
	
It is established in \cite[Corollary 2.4 and Theorem 2.15]{MX2007local}
(in light of Theorem \ref{T:A.4} of this paper)
that for any
\(\mu\in \cM_\rho(\mR^d)\) and $f\in C_c(\mR^d)$, the conditional Laplace transform of  a solution $X_t$ of \eqref{eq:CMP} with $X_0=\mu$ is characterized by
	\begin{equation}\label{eq:clt}
		\bP_\mu^W \exp(-\langle f, X_t\rangle):=\bP_\mu [\exp(-\langle f, X_t\rangle)|W]=\exp(-\langle u(t,\cdot),\mu\rangle),
    \end{equation}
	where
	$u$ is the pathwise unique solution to the  following stochastic partial differential equation (SPDE):
	\begin{equation}\label{eq:spde-clt}
		\p_t u= \frac{1}{2} \Delta u- \frac{1}{2} u^2+ u\, \p_t W, \quad u(0)=f.
	\end{equation}
	 The solution is understood in analytically weak and probabilistically strong sense.

	With the help of conditional Laplace transformation, Mytnik-Xiong \cite{MX2007local} proved that if
	\begin{equation}\label{eq:cond-cC}
		c(|x-y|^{-\alpha}\wedge 1)\leq \cC (x,y)\leq C
		\quad \hbox{for }  x, y\in \mathbb{R}^d,
	\end{equation}
	for some $c$, $C>0$ and $0\leq \alpha\leq 2$, then starting form the Lebesgue measure $m$ on $\mR^d$, the process $X_t(K)\rightarrow 0$ in probability,
 for any compact subset $K$ of $\mathbb{R}^d$. This means in the presence of a heavy-tailed correlation function $\cC$, SBMRE exhibits weak local extinction for any $d\geq 1$.
	
	On another hand, based on Dawson-Salehi \cite{DS1980spatially}, Mytnik and Xiong conjectured in  \cite[Conjecture 1.4]{MX2007local}
	   that if $\cC (x,y)\leq \eps (|x-y|^{-\alpha}\wedge 1)$ with $\alpha>2$, $\eps$ sufficiently small and $d\geq 3$,
	 SBMRE $X$ starting from the Lebesgue measure $m$ on $\mR^d$ will converge weakly to some non-trivial random measure $X_\infty$ on $\mR^d$ as $t\to \infty$.
		Here the assumption on $\cC $  means the environments are \textit{ ``weakly" correlative} in the sense that $\cC (x, y)$ goes to $0$ sufficiently fast  as  $|x-y|\to \infty$ and the variance  $\bE  \left[ \xi (x)^2 \right] = \cC(x, x)$ of the environment is sufficiently small for all $x\in \mathbb{R}^d$.

	We should mention that Rosati-Perkowski \cite{PR2021rough} recently studied the scaling limit of a branching random walk in static random environment in dimension $d=1,2$, and showed that it is given by a super-Brownian motion in a white noise potential  (that is, with $\cC(x,y)=\delta(x-y))$). In both dimensions 1 and 2,
	they prove the persistence property of this super-Brownian motion in rough  random  environment, which is
 opposite to what happens for the classical super-Brownian motion.
	
	\medspace
	
	\subsection{Main results}
    In this paper, we consider the limit  behaviors of the SBMRE $X$
   considered in \cite{MX2007local} starting from the Lebesgue measure $m$. In particular, we give  an affirmative answer to Conjecture 1.4 of  \cite{MX2007local}. We then further study the long-term behavior of  SBMRE when the correlation function of the random field is large near the diagonal (but there is no requirement on the decay speed of the correlation function as $|x-y|\to\infty$).

    \smallskip

    Throughout this work, 
    we  always assume
    $$
      \bE \l[W(s,x)W(t,y)\r]=\cC (x,y) (s\wedge t) \, \mbox{ with }\,  \|\cC\|_\infty<\infty.
    $$
   We consider SBMRE starting from
an infinite measure $\mu \in \cM_\rho(\mR^d)$ with $\rho > 0$ as a solution to the conditional martingale problem \eqref{eq:CMP}. In Proposition \ref{prop:CMP}, we establish  the existence and
uniqueness of solutions to equation
\eqref{eq:CMP} when $X_0 =\mu\in \cM_\rho(\mR^d)$, 
 which includes the Lebesgue measure $m$ on $\R^d$ as a special case provided $\rho>d$. 
The family of probability
measures on the space $\mathcal{M}_\rho(\mathbb{R}^d)$ will be denoted by  $\cP(\mathcal{M}_\rho(\mathbb{R}^d))$.

The following is the first main result of this paper.

  \begin{theorem}\label{thm:1}
	Let $\rho>d\geq 3$. Assume that
	\begin{equation}\label{cond:C}
    	\begin{aligned}
    		\cC (x,y)\leq&  g(x-y) ~ \mbox{ for some  bounded non-negative function }g
    	\end{aligned} \tag{\bf{H$_\cC$}}
    \end{equation}
    and
	\begin{equation}\label{cond:g}
		\theta:= \sup_{x\in \mR^d} \int_{\mathbb{R}^d} |x-y|^{2-d} g(y)\d y< \frac{8 (d-2) \pi^{d/2}}{d 2^d \Gamma \left(d/2-1\right) }.  \tag{{\bf H$_g$}}
	\end{equation}
	Then the distribution of $X_t$ starting from $m$ converges weakly to a non-trivial distribution $\pi^m$ in $\cP(\cM_\rho(\mR^d))$ as $t\to\infty$
	with mean $\int \mu\pi^m(\d \mu)=m$. Consequently, $\pi^m$ is an invariant probability distribution of $X_t$.
  \end{theorem}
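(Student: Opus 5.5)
Via the conditional Laplace transform \eqref{eq:clt}, for $f\in C_c^+(\mR^d)$ we have
$$\bP_m \exp(-\langle f,X_t\rangle)=\bE\exp\Big(-\int_{\mR^d} u_f(t,x)\,\d x\Big),$$
where $u_f$ solves \eqref{eq:spde-clt}. Since $W$ is white in time, the law of $\{u_f(t,\cdot)\}$ is unchanged if we run the noise backwards. Concretely, for fixed $t$ let $v^{(t)}(s,\cdot)$, $s\le t$, solve the same equation driven by the time-reversed noise $W^{(t)}(s,x)=W(t,x)-W(t-s,x)$. The plan is to show that the random variable $\int u_f(t,x)\,\d x$, realised on a common probability space, converges as $t\to\infty$ in $L^1(\bP)$ or in probability to some $U_\infty(f)$.

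Monotonicity in time is not available here because of the noise, so instead I would prove that $\int u_f(t,x)\,\d x$ is Cauchy in $L^2$ by a moment argument. Set
$$M_t:=\int u_f(t,x)\,\d x .$$
Integrating the SPDE in $x$ gives
$$\d M_t=-\tfrac12\int u^2\,\d x\,\d t+\int u(t,x)\,W(\d t,x)\,\d x,$$
so $\bE M_t$ is decreasing and bounded below by $0$, and hence has a limit. The quadratic variation of the martingale part is
$$\int\!\!\int u(t,x)u(t,y)\,\cC(x,y)\,\d x\,\d y\,\d t\le \int\!\!\int u u\, g(x-y)\,\d t .$$
The key step is then to bound $\bE\int_0^\infty\!\int\!\!\int u(s,x)u(s,y)g(x-y)\,\d x\,\d y\,\d s$ by a multiple of $\bE\int_0^\infty\!\int u^2$, using the heat kernel. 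Since $u\le P_t f\,\mathcal{E}_t$, with $\mathcal{E}_t$ a Feynman–Kac exponential of the noise, the two-point function $\bE[u(s,x)u(s,y)]$ is controlled by a two-particle Feynman–Kac expectation $\bE_{x,y}\exp\big(\int_0^s g(B_r-B'_r)\,\d r\big)$. The condition \eqref{cond:g}, with the Green function constant $\frac{\Gamma(d/2-1)}{2\pi^{d/2}}|x|^{2-d}$ of $\tfrac12\Delta$ applied to the relative motion $B-B'$ (which has generator $\Delta$), together with Khasminskii's lemma, gives
$$\sup_z\bE_z\exp\Big(\lambda\int_0^\infty g(\sqrt2\,\tilde B_r)\,\d r\Big)<\infty$$
for the relevant $\lambda$. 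The threshold's extra factors $8/d\cdot 2^{-d}$ presumably come from choosing $\lambda$ and using Hölder, and I would calibrate this constant at that point.

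With these bounds, $\sup_t\bE M_t^2<\infty$, $\bE M_t\to\int f\,\d x-\tfrac12\bE\int_0^\infty\!\int u^2$, and the martingale part converges in $L^2$. The decreasing drift term converges almost surely, so $M_t\to M_\infty$ both a.s. and in $L^1$. This gives convergence of the Laplace functionals $L_t(f)\to L_\infty(f):=\bE e^{-M_\infty}$. For tightness in $\cP(\cM_\rho)$, use $\bE\langle\phi_\rho,X_t\rangle=\langle\phi_\rho,m\rangle$. Since $L_\infty$ is continuous along monotone limits, it defines a law $\pi^m$. The mean identity $\int\langle f,\mu\rangle\pi^m(\d\mu)=\int f$ follows from uniform integrability of $\langle f,X_t\rangle$, which comes from a second-moment bound
$$\bE\langle f,X_t\rangle^2\le C\Big(\int\!\!\int f(x)f(y)\,\bE_{x,y}e^{\int_0^t g}\,\d x\,\d y+\text{branching term}\Big).$$
The branching term is $\int_0^t\!\int (P_s f)^2$, which is finite uniformly in $t$ exactly when $d\ge3$. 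These bounds also show $\pi^m\neq\delta_0$. Invariance follows from the Markov property and the Feller continuity of the semigroup on $\cM_\rho$.

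The main obstacle is obtaining the uniform-in-time exponential moment bound for the two-point correlation under the sharp constant in \eqref{cond:g}. This requires carefully matching the Green function normalisation, and checking that the nonlinear term does not spoil the comparison $u\le$ Feynman–Kac.
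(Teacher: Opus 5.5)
Your architecture (tightness from the first moment, convergence of Laplace functionals, the mean via uniform integrability, invariance via Feller continuity) is reasonable. However, the step where \eqref{cond:g} actually has to be used is misidentified, and as written it is unsupported. Note that \eqref{cond:g} is equivalent to $\sup_x\int G(x,y)g(y)\,\d y<\frac{d-2}{2d}\,2^{2-d}$. In your route the mean identity rests on $\sup_t\bE_m\langle f,X_t\rangle^2<\infty$. By \eqref{eq:second} with $\nu=m$ (legitimate by Proposition \ref{prop:CMP}), the branching term is \emph{not} $\int_0^t\!\int (P_sf)^2\,\d x\,\d s$; that is the classical super-Brownian term. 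It is $\int_0^t\!\int \pi Q_s^{\cC}(f\otimes f)(x)\,\d x\,\d s=\int_0^t\!\int \bE[v(s,x)^2]\,\d x\,\d s$, i.e. two Brownian particles started at the \emph{same} point and weighted by $\exp(\int_0^s\cC(B_r,B'_r)\,\d r)$. Its boundedness in $t$ does not follow from $d\ge 3$; for large $g$ it can grow exponentially. It is exactly Lemma \ref{lem:intEv2}, which needs three ingredients: H\"older with some $p>2d/(d-2)$; Khasminskii for $pg$, i.e. $p\sup_x\int G(x,y)g(y)\,\d y<1$; and the decay $\int (P_sf^{p'})^{2/p'}\,\d x\lesssim s^{-d(1/2-1/p)}$. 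This is where the factor $(d-2)/(2d)$ comes from. The other term is fine: by symmetry of the Feynman--Kac kernel it equals $\iint f(x)f(y)\,E_{(x,y)}[e^{\int_0^t\cC(B_s,B'_s)\d s}]\,\d x\,\d y\le \langle f,m\rangle^2\sup_z E_z[e^{\int_0^\infty g(\beta_s)\d s}]$.

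Conversely, the estimate you put at the centre, $\bE\int_0^\infty\!\iint u\,u\,g$, is not needed for convergence. $M_t\ge 0$ is a nonnegative supermartingale, so it converges a.s. with no hypothesis on $g$, and bounded convergence gives $\bE e^{-M_t}\to \bE e^{-M_\infty}$; this is the paper's argument. Your sketch of that estimate would also not go through. Comparing with $\bE\int_0^\infty\!\int u^2$ requires $g\in L^1$, which \eqref{cond:g} does not give (e.g. $g(y)=\eps(1+|y|^{\alpha})^{-1}$ with $2<\alpha\le d$). A uniform gauge bound, even combined with H\"older, only yields decay $|z|^{2-d/p'}$ in the starting point, which is not integrable against every admissible $g$. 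The paper's Lemma \ref{lem:intEvv} needs the conditional bound $G^g\le CG$ via Doob's $h$-transform and the 3G inequality, which is the source of the factor $2^{2-d}$. Once the branching term is handled correctly, your route is a genuinely different and leaner proof of the mean identity. The paper differentiates $\lambda\mapsto \bE\exp(-\lambda\langle f,m\rangle+\xi(\lambda))$ at $0$ and needs Lemma \ref{lem:intEvv} to bound $\bE|\partial_\lambda m(\tau)|^2$. Uniform integrability plus weak convergence needs only the gauge bound and Lemma \ref{lem:intEv2}, so it bypasses the $h$-transform entirely and would only use $\sup_x\int G(x,y)g(y)\,\d y<(d-2)/(2d)$.
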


\begin{remark} \rm
\begin{enumerate} [\rm (i)]
    \item Let $d\geq 3$, and $\theta$ be the constant given by \eqref{cond:g}. When $W$ is spatially homogeneous and  $\cC(x,y)=g(x-y)$ satisfies \eqref{eq:cond-cC} with $\alpha\in [0,2]$, then $\theta=\infty$.

\smallskip

    \item If $0\leq g(y)\leq \frac{\eps}{1+|y|^{\alpha}}$ for some $\alpha>2$, then
   	   \begin{align*}
   		 \theta
		  :=  & \sup_{x\in \mR^d} \int_{\mathbb{R}^d} |x-y|^{2-d} g(y)\d y
   		 \leq  \sup_{x\in\mR^d} \int_{\mR^d} \frac{\eps }{(1+ |y|^{\alpha}) |x-y|^{d-2}} dy \\
    	 \leq& \int_{\mR^d} \frac{\eps }{(1+ |y|^{\alpha}) |y|^{d-2}} dy
	 < \frac{8 (d-2) \pi^{d/2}}{d 2^d \Gamma \left(d/2-1\right)},
   	   \end{align*}
   	   provided that $\eps$ is sufficiently small. Therefore, Theorem \ref{thm:1} in particular gives an affirmative answer to \cite[Conjecture 1.4]{MX2007local}.
	
\smallskip

	\item  It is recently shown in \cite{FHX2025quenched} that for $d\geq 3$ and $\alpha >2$,  there is a positive constant $\eps_{0}=\eps_{0}(d, \alpha)$ so that when
		\begin{equation} \label{e:1.4}
	    \cC (x,y)\leq g(x-y) = \frac{\eps}{1+|x-y|^{\alpha}}
	\end{equation}
	with $0<\eps <\eps_{0}$,
	then $\frac{1}{T}\int_0^T \<f, X_t\> \d t$ converges to $ \<f,m\>$ as \(T\to\infty\) in both quenched and annealed sense   for any \(f\in C_c(\mR^d)\). As we noted in  part (ii) of this remark, condition \eqref{e:1.4} is a special case under which conditions \eqref{cond:C}-\eqref{cond:g} hold. Our Theorem \ref{thm:1} establishes that, under  conditions \eqref{cond:C}-\eqref{cond:g},
 the process \(X_t\) itself converges weakly to a non-trivial random measure $\pi^m$ with mean \(m\) as \(t\to\infty\).
\end{enumerate}
\end{remark}

  As mention before, it is proved in \cite[Theorems 1.1 and 1.2]{MX2007local}   that  if the covariance $\cC(x, y)$ of
  $\xi_k(x)$ and $\xi_k(y)$
  does not  decay too fast as $|x-y|\to \infty$, then the process $X_t$ suffers local extinction in weak sense. The following theorem
  is the second main result of this paper, which
   gives another sufficient condition for almost sure  local extinction.

  \begin{theorem}\label{thm:2}
  	Let $d\geq 1$. Suppose that   $a>0 $ and
	  	 \begin{equation}\label{cond:Gamma}
  	 	\begin{aligned}
  \cC(x, y)=a \Theta(x-y)   \quad \hbox{ for some } \Theta\in C^\beta(\mR^d) \mbox{ with } \beta>1 \hbox{ and } \Theta(0)=1.
	 	\end{aligned}
  	 	 \tag{{\bf H$_\Theta$}}
  	\end{equation}
  	There exists  a constant $N_0:=N_0(d, \Theta)\geq 1$ such that if $a\geq N_0$, then for any compact set $K\in \mR^d$ there exists a random time $T_K<\infty$ a.s. so that
  	$X_t(K)=0$ for every $ t\geq T_K$.
  \end{theorem}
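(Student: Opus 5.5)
We work through the conditional Laplace transform \eqref{eq:clt}: for a compact $K$, $\bP^W_m(X_t(K)=0)=\lim_{\lambda\to\infty}\exp(-\langle u^\lambda(t,\cdot),m\rangle)$, where $u^\lambda$ solves \eqref{eq:spde-clt} with $u(0)=\lambda \1_K$, or rather a smooth approximation of it. Monotonicity in $\lambda$ and the comparison principle for \eqref{eq:spde-clt} give a limit $\bar u(t,x)=\lim_{\lambda\to\infty}u^\lambda(t,x)$. The goal is to show that $\int_{\mR^d}\bar u(t,x)\,\d x\to0$ almost surely as $t\to\infty$. Since $\{X_t(K)=0\}$ is not monotone in $t$, we instead consider the event $\{X_s(K)=0\ \forall s\ge t\}$. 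This is handled by enlarging $K$ and applying the Markov property, or, more cleanly, by using the extinction-time representation: the event that the process started from $m$ restricted to a neighbourhood has died out.

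The plan has four steps.

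\textbf{Step 1 (remove the noise term).} Apply the Cole--Hopf type substitution $u=Z v$, where $Z$ solves the parabolic Anderson model $\p_t Z=\frac12\Delta Z+Z\,\p_t W$, $Z(0)=1$, written in Stratonovich/Itô form with the $\frac a2\Theta(0)$ correction. The equation for $v$ then becomes a random PDE of the form $\p_t v=\frac12\Delta v+\nabla\log Z\cdot\nabla v-\frac12 Zv^2$.

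\textbf{Step 2 (exponential decay of the noise).} Because $\cC=a\Theta$ with $\Theta(0)=1$, the Itô correction makes $Z(t,x)$ behave like $\exp\bigl(W_t(x)-\frac a2 t\bigr)$, up to the smoothing effect of the Laplacian. The key point is that for large $a$ the Lyapunov exponent $\gamma(a)=\lim_t t^{-1}\log Z(t,x)$ is strictly negative. Moreover, the regularity $\Theta\in C^\beta$ with $\beta>1$ keeps the spatial correlation length of order one, so $Z$ is locally almost constant. Since the noise is smooth in $x$, $\gamma(a)\approx-\frac a2+O(\sqrt a)$ for large $a$, which is negative.

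\textbf{Step 3 (translating decay into extinction).} Simpler than working with $v$ directly: bound $\bar u$ using a local maximum-principle argument. The solution with infinite initial data is dominated, on each unit ball $B(z,1)$ and time interval $[t,t+1]$, by an a priori bound of the form $\bar u\le C/s$ that comes from the $-\frac12u^2$ term. The noise multiplies this by a factor $\sup_{B(z,1)\times[t,t+1]}Z/\inf Z$, which has controlled moments because $\beta>1$ makes $W$ differentiable in $x$ and we can use Kolmogorov-type estimates. Combining this with the Feynman--Kac representation $u(t,x)=E_x\bigl[\exp\bigl(\int_0^t W(\d s,B_{t-s})-\frac a2 t-\frac12\int_0^t u\bigr)\,u(0,B_t)\bigr]$ applied on $[1,t]$ yields
\[
\bE\langle \bar u(t),m\rangle\le C_K\,e^{-ct}
\]
for some $c>0$ when $a\ge N_0$. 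The first moment is taken in annealed form, using a fractional moment $\bE[\cdot^{p}]$ with $p<1$, since the first moment of $Z$ equals $1$.

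\textbf{Step 4 (almost sure statement).} Given the decay in Step 3, Borel--Cantelli along integer times shows that $\bP(X_n(K')\neq0$ for some $n$ large$)=0$ for a slightly larger compact set $K'$. The survival probability over $[n,n+1]$ of mass in $K$ is then controlled by the mass in $K'$ at time $n$ together with the mass coming from far away, and the latter is also controlled by $\bar u$. This gives the random time $T_K$.

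The main obstacle is Step 3: obtaining a negative fractional-moment Lyapunov exponent, $\bE[Z(t,x)^p]\le e^{-ct}$ for some $p\in(0,1)$, that survives the nonlinear infinite-initial-data limit uniformly over space. I would first try the approach via Jensen's inequality and the Gaussian computation on the Feynman--Kac side. For the Brownian paths, $\int_0^t W(\d s,B_s)$ has quadratic variation $a\int_0^t\Theta(0)\,\d s=at$, so the $p$-th moment produces the factor $\exp\bigl(\frac{p(p-1)}2at\bigr)$. Choosing $p=\frac12$ then gives decay $e^{-at/8}$, which beats the polynomial spatial growth factors once $a\ge N_0$.
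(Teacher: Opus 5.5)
There is a genuine gap, and it sits where you place the main obstacle. Your route to $\bE[Z(t,x)^p]\le \e^{-ct}$ uses Jensen's inequality in the wrong direction. Write $Z(t,x)=E[Y]$ with $Y=\exp\bigl(\int_0^tW(\d s,x+B_{t-s})-\frac a2t\bigr)$. For $p\in(0,1)$, concavity of $y\mapsto y^p$ gives $(E[Y])^p\ge E[Y^p]$. So the Gaussian computation $\bE E[Y^p]=\e^{p(p-1)at/2}$ is a \emph{lower} bound for $\bE[Z(t,x)^p]$, not an upper bound. It ignores the entropy of the Brownian paths, which can track regions where the noise is favourable. Beating that effect, i.e. showing the quenched growth rate of $\widetilde v$ lies strictly below the annealed rate $a/2$, is the entire difficulty.

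The argument also proves too much. The factor $\e^{-at/8}$ beats polynomial factors for every $a>0$, so your Steps 3--4 never use $a\ge N_0$ in an essential way. They would give local extinction for all $a>0$ and all $d$. That contradicts Theorem \ref{thm:1}: take $d\ge3$, $\Theta(x)=\e^{-|x|^2}$ and $a$ small, so that \eqref{cond:g} holds with $g=a\Theta$. Local extinction for every compact $K$ would force $\pi^m=\delta_0$, against $\int\mu\,\pi^m(\d\mu)=m$.

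Likewise, $\gamma(a)\approx-\frac a2+O(\sqrt a)$ in Step 2 is an unsupported heuristic standing in for exactly this nontrivial input. The paper imports it as follows:
\begin{itemize}
\item after the rescaling $\widetilde v(t,\cdot)\overset{d}{=}w_a(at,\cdot)$, Carmona--Viens give $\lambda(a)\le C/\log a$ for $w_a$;
\item Cranston--Mountford give the single-point bound $\bP\bigl(w_a(t,x)>\e^{(\lambda(a)+1/100)t}\bigr)\le\e^{-ct}$;
\item a Harnack-type comparison, combined with chaining over the grids $2^{-i}t^{-10}\mZ^d\cap[-t,t]^d$, yields Lemma \ref{lem:LDP}: $\bP\bigl(\sup_{|x|\le t}v(t,x)>\e^{-at/3}\bigr)\le\e^{-ct}$.
\end{itemize}
The regularity of $\Theta$ enters only in the Harnack step, through $1-\Theta(h)\le C|h|$. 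Note also that $\Theta\in C^\beta$ with $\beta\in(1,2)$ does not make $W$ differentiable in $x$, contrary to what you assert.

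Beyond this, your reduction is heavier than necessary, and Step 4 does not close. The paper needs neither the Cole--Hopf substitution nor a local maximum principle with a factor $\sup Z/\inf Z$. It combines three ingredients: the Markov property at time $n/2$, Jensen in the form $\bE\e^{-\<u,\mu\>}\ge\e^{-\<\bE u,\mu\>}$, and the annealed comparison $\bE u^k(t)\le(t/2+1/k)^{-1}$. Together these give $\bP_{m_n}(X_n=0)\ge\bE\exp\bigl(-\frac4n\int_{B_n(0)}v(n/2,x)\,\d x\bigr)$, so only the linear equation \eqref{eq:PAM} on $B_n(0)$ has to be controlled.

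In Step 4, $X_n(K')=0$ for a fixed, slightly larger $K'$ says nothing about the infinite mass just outside $K'$ that can enter $K$ during $[n,n+1]$. Controlling it requires an estimate on a region growing with $n$ plus a range bound, and your sketch leaves both unspecified. The paper instead splits the \emph{initial} measure, $m=m_n+m'_n$. The $m_n$-part is totally extinct at time $n$ with probability at least $1-\e^{-cn}$, and $0$ is absorbing. The $m'_n$-part satisfies the Mytnik--Xiong range estimate $\bP\bigl(\int_0^{n+1}\overline X^n_s(K)\,\d s\neq0\bigr)\le C\e^{-n^{\delta'}}$. Borel--Cantelli then produces $T_K$.
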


  \begin{remark} \rm
   \begin{enumerate}[(i)]
     \item Roughly speaking, Theorem \ref{thm:2} indicates that when the variance of $\xi_k(x)$ is substantial, regardless of how rapidly the correlation function $\cC(x,y)$ diminishes as $|x-y|\to\infty$, the SBMRE is almost surely to experience local extinction.

 \smallskip

     \item An interesting question is whether SBMRE also adheres to the extinction/persistence dichotomy.
    	 Suppose that the spatial covariance function for $W(t,x)$ is given by $\cC(x,y)=g(x-y)$. Let $\theta$ be the constant given by \eqref{cond:g}.  Drawing from the insights provided by the main results in \cite{MX2007local}
      and our two main results above, we conjecture
      there exists a critical index, denoted as $\theta_c$, such that when $\theta>\theta_c$, $X$ suffers local extinction
     and when $\theta<\theta_c$, $X_t$ converges weakly to a nontrivial random measure on $\mR^d$.
   \end{enumerate}
  \end{remark}

   \subsection{Strategy of  our approach}

   Let  $v$ be the uniqueness mild solution to the following parabolic Anderson model (PAM) in $\mR^d$:
   \begin{equation}\label{eq:PAM}
   	\p_t v=\frac{1}{2} \Delta  v+v\,  \p_t W   \quad \hbox{with } v(0)=f.
   \end{equation}
   As in \cite{MX2007local},  we
   employ the conditional Laplace transformation and the comparison principle for semilinear SPDEs to examine the limiting characteristics of SBMRE. This entails investigating the asymptotic behavior of $v(t,x)$ as $t$ approaches infinity. Demonstrating the persistence of $X$ under condition \eqref{cond:g} necessitates more refined estimates for
   $$
     \bE \int_0^\infty\!\!\int_{\mR^d} v^2(t,x) \d x\d t ~\mbox{ and }~ \bE \int_0^\infty\!\!\int_{\mR^{2d}} v(t,x)\cC(x,y) v(t,y) \d x \d y \d t.
   $$
   To establish the boundedness of the above two quantities, we undertake an analysis of the
estimates for the Green function associated with a Feynman-Kac semigroup defined by the
function $g$. These estimates are elaborated in the proof of Lemmas \ref{lem:intEv2} and \ref{lem:intEvv} below.

   \medskip
   The proof of finite-time local extinction of X under condition \eqref{cond:Gamma} requires a more detailed
examination of the rate at which $v$ converges to zero as time approaches infinity. In fact, PAM \eqref{eq:PAM} with colored Gaussian noise was studied in a number of papers (see e.g. \cite{CV1998almost}, \cite{FV2006Lyapunov} and \cite{CM2006lyapunov}). Under the setting of Theorem \ref{thm:2}, it can be deduced from \cite[Theorem 2]{CV1998almost} that there is a constant $N_0>0$ so that when $a\geq N_0$ and $t\gg 1$, for each $x\in \mR^d$, it holds that $\log v(t,x)/t \leq -{a}/{3}$ with high probability.
  In fact, we show that there is constant $N_0>0$
   so that for
    $a\geq N_0$ and $t\gg 1$,
   $$
     \log \l(\sup_{|x|\leq t}v(t,x) \r)\Big/t \leq -a/3 \quad \mbox{with probability at least } 1-\e^{-c t}.
   $$
   Its proof will be given in Appendix \ref{Sec:App-LDP} of this paper, where we utilize a Harnack-type inequality and a chaining argument. Armed with this estimate, to obtain the local extinction result, we employ a Borel-Cantelli argument, following the approach outlined in \cite{MX2007local}.

   \medskip
   \begin{remark} \rm While the paper is just finalized now, a major portion of its work was
     completed some time ago.
   The results had been reported at several conferences. A preliminary version of this paper was made available
   to some scholars in the field upon their requests, and it had been cited  in  \cite{FHX2025quenched} and   \cite{CHY25occupation}. 
       \end{remark}

   \medskip

  The rest of this paper is organized as follows.
  In Section \ref{Sec:TR},  we prepare some preliminary results that will be used later.
  In Section \ref{Sec:MF}, we give the first and second moments formulas for SBMRE starting from finite initial measure. Section \ref{Sec:persistence} is dedicated to addressing the existence and uniqueness of solutions to equation \eqref{eq:CMP} with $\mu\in {\cM}_\rho(\mR^d)$ and proving Theorem \ref{thm:1}. Section \ref{Sec:extincition} is focused on providing the proof of Theorem \ref{thm:2}. Some results about positivity and comparison principle for semilinear SPDEs are stated in Appendix \ref{Sec:App-SPDE}. A  large deviation result, which plays a crucial role in the proof for Theorem \ref{thm:2} is presented in Appendix  \ref{Sec:App-LDP}.

  \smallskip

  We close this section by mentioning some conventions and notations used throughout this paper:
  \begin{itemize}
  	\item we use  $:=$ as a way of definition.
  	
  	\item For $a, b\in \mathbb{R}$, $a\vee b:= \max \{a, b\}$ and $a\wedge b :=\min \{a, b\}$.

  	\item For  a measurable function $f$,  $f^{+}:= f \vee 0$ and $f^{-}:=(-f) \vee 0$
	 stand  for its positive and negative part, respectively.
  	
  	\item For any measurable functions $f_i$ defined on $\cX_i$ for $i=1,2$, let $f_1\otimes f_2$ denote the real-valued function on $\cX_1\times \cX_2$ defined by  $(f_1\otimes f_2)(x,y):=f_1(x_1)f_2(x_2)$ for all $(x_1,x_2)\in \cX_1\times \cX_2$.
  	
  	\item $\mR_+:=(0,\infty)$ and $\overline{\mR}_+:=[0,\infty)$.
  	
  	\item $B_r(x):=\{y\in \mR^d: |y-x|<r\}$ is the open ball with radius $r>0$ centered at $x\in \mR^d$.

  	\item Let $\cX$ be a metric space.
  	\begin{itemize}
  	    \item $\sB(\cX)$: the collection of all Borel subsets of $\cX$.
  	
  		\item $\mathcal{B}(\mathcal{X})$ [resp. $\mathcal{B}_b(\mathcal{X})$]: the space of all [resp. bounded] Borel measurable functions on $\mathcal{X}$; the superscript '$+$' restricts these to non-negative functions.
  		
  		\item $C_b(\cX)$ ($C_b^+(\cX)$):  the space of all (positive) bounded continuous functions on $\cX$.
  		
  		\item $C_c(\cX)$ ($C^+_c(\cX)$):  the space of all (positive) continuous functions on $\cX$ with compact support.
  		
  	\end{itemize}
  	
  	\item For any $u,v\in L^2(\mR^d)$,
  	$$
  	(u, v):= \int_{\mR^d} u(x) v(x) \d x,  \quad \| u\|_2^2:=(u, u).
  	$$
  	\item
  	Set \(H^1(\mR^d)\) as the usual Sobolev space defined by
  	$$
  	H^1(\mR^d):=\left\{u\in L^2(\mR^d): \nabla u\in L^2(\mR^d; \mR^d)\right\}.
  	$$
  	For any $u\in H^1(\mR^d)$, $\| u\|_{H^1}^2 :=  \| u\|_2^2 +\| \nabla u\|_2^2$.
  	
  	\item $\mathcal{M}(\mathbb{R}^d)$: the space of Radon measures on $\mR^d$ equipped with vague topology.
  	
  	\item $\mathcal{M}_F(\mathbb{R}^d)$: the space of finite measures on $\mR^d$ equipped with weak topology.
  	
  	\item $\cM_\rho(\mR^d):=\{\mu: \mu\in\mathcal{M}(\mathbb{R}^d)\mbox{ such that }\langle \phi_\rho, \mu\rangle<\infty\}$.
  	
  	\item $\cP(\mathcal{M}(\mathbb{R}^d))$: the set of all probability measures on $\mathcal{M}(\mathbb{R}^d)$.
  	
  	\item Throughout this paper, we use $C$ or $c$ to denote positive constants whose exact value is not important and can change from line to line, and also use the notation $C(a, b, \cdots)$ or $c(a,b,\cdots)$ to denote a constant which depends only on the numbers $a,b,\cdots$.
  \end{itemize}
  In this paper, we use  $p(t,x):=(2\pi t)^{-d/2} \e^{-\frac{|x|^2}{2t}}$ to denote the transition probability density of Brownian motion in $\mR^d$, and
  $\{P_t; t\geq 0\}$ to denote the transition semigroup associated with Brownian motion,
 that is, 
  for $f\in \cB^+ (\R^d)$,
  $P_0f=f$ and
  $$
  P_t f (x) = \int_{R^d} p(t, x-y) f (y)dy \quad \hbox{for } t>0 \hbox{ and }x\in \R^d.
  $$
   It is well known that when $d\geq 3$, the Green function of $\Delta$ is given by
  $$
  G (x, y):=\int_0^\infty p(2t, x-y) \d t = \frac{\Gamma\left(d/2-1\right)}{4 \pi^{d/2}} |x-y|^{2-d}.
  $$

  \section{Some Technical  Results}\label{Sec:TR}

   Here and in the subsequent discussions,
  we assume that $B_t$ and $B'_t$ are  two independent $d$-dimensional Brownian motions starting respectively from $x,y\in\R^d$ under $P_{(x,y)}$. We  use $E_{(x,y)}$ to 
   denote expectation with respect to $P_{(x,y)}$.

  \subsection{Moment estimates for PAM}

  We need some moment estimates for the mild solution
$v(\omega,t,x)$ of PAM \eqref{eq:PAM}. Note that $v$ is random and is defined on $\Omega\times[0,\infty)\times\R^d$, but unless
otherwise specified, we typically suppress $\omega$ from the notation $v(\omega,t,x)$ and denote it simply
by $v(t,x)$.  An explicit representation for $\bE[v(t,x)v(t,y)]$ is given in the following lemma.
  \begin{lemma}\label{lem:Evv}
  	Suppose $f\in C_c^+  (\mR^d) $
  	and $v$ is the unique solution to \eqref{eq:PAM}.  For each $(t, x, y)\in (0, \infty)\times \mR^d \times \mR^d$,
  	\begin{align}\label{eq:E-v2}
  			\bE   \left[v(t,x)v(t,y)\right]
  			= E_{(x,y)} \left[ \exp\left(\int_0^t \cC (B_s, B'_s)ds\right) (f\otimes f)(B_t, B'_t)\right].
	\end{align}
  \end{lemma}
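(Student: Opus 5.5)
We prove the formula by a duality argument. Fix $t>0$ and $y$, and study the space–time function
\[
w(s,x,y) := \bE\big[v(s,x)v(s,y)\big].
\]
The aim is to show that $w$ solves a deterministic parabolic equation on $\mR^{2d}$ with a bounded potential, and then to identify its solution through the Feynman–Kac formula. We avoid a direct Itô computation on the pointwise product $v(s,x)v(s,y)$, since $v$ is only a mild solution. Instead we work with the mild formulation. First we recall
\[
v(t,x)=P_tf(x)+\int_0^t\!\!\int_{\mR^d} p(t-s,x-z)\,v(s,z)\,W(\d s,z)\,\d z,
\]
where the stochastic integral is in the Walsh/Dalang sense with covariance $\cC(z,z')\,\d s$. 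Because $\cC$ is bounded and $f\in C_c^+$, standard Picard iteration gives $\sup_{s\le t,x}\bE[v(s,x)^2]<\infty$. All the stochastic integrals below are therefore genuine $L^2$ martingales.

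Next we multiply the two mild equations, at $x$ and at $y$, and take expectations. The cross terms involving $P_tf$ times a stochastic integral have mean zero. The Itô isometry for two stochastic integrals then yields the renewal-type identity
\[
w(t,x,y)=P_tf(x)P_tf(y)+\int_0^t\!\!\int_{\mR^{2d}} p(t-s,x-z)\,p(t-s,y-z')\,\cC(z,z')\,w(s,z,z')\,\d z\,\d z'\,\d s .
\]
This is exactly the mild (Duhamel) form of the equation
\[
\p_s w=\tfrac12\Delta_{(x,y)}w+\cC w
\]
on $\mR^{2d}$, with initial datum $f\otimes f$. The heat kernel of $\frac12\Delta$ on $\mR^{2d}$ is $p(\cdot,x-z)\,p(\cdot,y-z')$, which is the transition density of the pair $(B,B')$ under $P_{(x,y)}$.

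Now set
\[
u(t,x,y):=E_{(x,y)}\Big[\exp\Big(\int_0^t\cC(B_s,B'_s)\,\d s\Big)(f\otimes f)(B_t,B'_t)\Big].
\]
This quantity is finite and bounded by $\e^{t\|\cC\|_\infty}\|f\|_\infty^2$. We check that $u$ satisfies the same integral equation. Using $\e^{\int_0^t \cC}=1+\int_0^t \cC(B_s,B'_s)\,\e^{\int_s^t\cC}\,\d s$ together with the Markov property at time $s$ gives
\[
u(t,x,y)=P_tf(x)P_tf(y)+\int_0^t E_{(x,y)}\big[\cC(B_s,B'_s)\,u(t-s,B_s,B'_s)\big]\,\d s .
\]
After the substitution $s\mapsto t-s$ this is the identity above.

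Finally, let $h=w-u$. It is bounded on $[0,t]\times\mR^{2d}$ and satisfies
\[
\sup_{x,y}|h(s,x,y)|\le\|\cC\|_\infty\int_0^s\sup|h(r,\cdot)|\,\d r .
\]
Gronwall's lemma then gives $h\equiv0$, which proves \eqref{eq:E-v2}.

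The main obstacle is justifying the second-moment identity for the mild solution, namely the Itô isometry with spatially correlated noise, together with the uniform boundedness of $w$ that the Gronwall step needs. Both follow from the Walsh–Dalang theory once $\cC$ is bounded, because the stochastic integrand $p(t-s,x-\cdot)\,v(s,\cdot)$ has finite norm
\[
\int_0^t\!\!\int p\,p\,|\cC|\,\bE|v\,v|\;\le\; C\,t .
\]
Measurability of $w$ in $(x,y)$ also needs to be noted; it follows from the $L^2$-continuity of $v$ in $x$.
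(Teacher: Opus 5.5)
Your proposal is correct and follows the paper's proof: the mild formulation together with the It\^o isometry gives the same renewal identity $h(t)=Q_t(f\otimes f)+\int_0^t Q_{t-s}(\cC\, h(s))\,\d s$, which is then identified with the Feynman--Kac functional. The only difference is in that last step. The paper simply invokes the Feynman--Kac representation, whereas you check directly, via the expansion of $\e^{\int_0^t\cC}$ and the Markov property, that the Feynman--Kac functional solves the same Duhamel equation, and then conclude by a Gronwall uniqueness argument for bounded solutions; this makes the identification fully rigorous when $\cC$ is merely bounded and measurable.
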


  \begin{proof}
  	 Since $v$ is a mild solution to
  \eqref{eq:PAM},
  	$$
  	    v(t,x)=P_tf(x)+\int_0^t\!\!\int_{\mathbb{R}^d}p(t-s,x-z)v(s,z)W(\d s,z)\d z.
  	$$
  	By It\^o isometry,
  	\begin{equation} \label{eq:E-square-v}
  		\begin{aligned}
  			&h(t,x,y):=\bE  \left[ v(t,x)v(t,y) \right] \\
  			=&P_tf(x)  P_tf(y)+\bE  \left[ \int_0^t\!\!\int_{\mathbb{R}^d}p(t-s,x-z)v(s,z)W(\d s,z)\d z \right.\\
  			&  \qquad \qquad  \qquad  \qquad  \qquad    \left. \int_0^t\!\!\int_{\mathbb{R}^d}p(t-s,y-w)v(s,w)W(\d s,w)\d w\right] \\
  			=&P_tf(x)  P_tf(y)
  			+\int_0^t \d s\!\! \int_{\mathbb{R}^{2d}}p(t-s,x-z)p(t-s,y-w)\bE  [v(s,z)v(s,w)]\cC (z,w)\d z\d w\\
  			=&Q_t(f\otimes f)(x,y)+\int_0^t Q_{t-s}(\cC (\cdot, \cdot) h(s, \cdot, \cdot))(x,y)ds,
  		\end{aligned}
  	\end{equation}
  	where $Q_tF (x,y):=\int_{\mR^{2d}} p(t,x-z)p(t,y-w) F(z,w)\d z \d w$  for any non-negative function $F$
defined on $\R^d\times\R^d$.
  	Equation \eqref{eq:E-square-v} means that  $h(t,x,y)$ is a solution on $\R^{2d}$ of
  	$$
  	  \partial_t h= \frac{1}{2}\Delta h+\cC h, \quad h(0)=f\otimes f.
  	$$
  	It follows from the Feynman-Kac representation that
  	\begin{eqnarray*}
  		\bE   \left[v(t,x)v(t,y)\right]=h(t, x, y)
  		= E_{(x,y)} \left[ \exp\left(\int_0^t \cC (B_s, B'_s)\d s\right) (f\otimes f)(B_t, B'_t)\right] .
  	\end{eqnarray*}
    This proves our assertion.
  \end{proof}

  One can also derive explicit expressions about the $n$th moment $\bE [v^n(t,x)]$ following the
argument in the proof of Lemma \ref{lem:Evv}.  However, such a result will not be used in this paper.
In fact, we only need the following very simple estimate.
  \begin{lemma}\label{lem:Evp}
  	Let $T>0$. Suppose $f\in C_c^+  (\mR^d)$
  	and $v$ is the unique solution to  \eqref{eq:PAM}. Then for any $p\geq 2$,
  there is a constant $C = C(d,p,T,\|\cC\|_\infty) > 0$ so that
  	\begin{equation}\label{eq:E-vp}
	  \sup_{x\in \mR^d, t\in[0, T]} \bE [ |v (t, x) |^p ] \leq C \|f\|_\infty^p.
  	\end{equation}
  \end{lemma}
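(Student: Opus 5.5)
We argue from the mild formulation
$$
v(t,x)=P_tf(x)+\int_0^t\!\!\int_{\mR^d}p(t-s,x-z)v(s,z)W(\d s,z)\d z
$$
and combine the Burkholder--Davis--Gundy inequality with a Gronwall argument. Fix $T>0$, $p\ge 2$, and set $M(t):=\sup_{x\in\mR^d}\bE[|v(t,x)|^p]$. Since $f\ge 0$ we have $\|P_tf\|_\infty\le \|f\|_\infty$, so the deterministic term contributes at most $\|f\|_\infty$. For fixed $(t,x)$, the stochastic integral is the terminal value at $r=t$ of the martingale $r\mapsto \int_0^r\int p(t-s,x-z)v(s,z)W(\d s,z)\d z$. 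Its quadratic variation is
$$
\int_0^t\!\!\int_{\mR^{2d}} p(t-s,x-z)p(t-s,x-w)\,v(s,z)v(s,w)\,\cC(z,w)\,\d z\d w\,\d s .
$$
Using $|\cC|\le\|\cC\|_\infty$, this is at most $\|\cC\|_\infty\int_0^t\bigl(\int p(t-s,x-z)|v(s,z)|\d z\bigr)^2\d s$. Here the boundedness of $\cC$ is exactly what removes any singularity in time: no Dalang-type condition is needed.

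The first step is to apply BDG, which gives
$$
\bE\Bigl|\int_0^t\!\!\int p\,v\,W\Bigr|^p\le C_p\|\cC\|_\infty^{p/2}\,\bE\Bigl[\Bigl(\int_0^t\Bigl(\int p(t-s,x-z)|v(s,z)|\d z\Bigr)^2\d s\Bigr)^{p/2}\Bigr].
$$
Next we apply Minkowski's inequality in $L^{p/2}(\Omega)$ to the $\d s$-integral. Then we apply Jensen, or Minkowski again, with the probability measure $p(t-s,x-z)\d z$, which gives $\|\int p|v|\d z\|_{L^p(\Omega)}\le \sup_z\|v(s,z)\|_{L^p(\Omega)}$. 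Together these yield
$$
M(t)^{2/p}\le 2\|f\|_\infty^2+C\int_0^t M(s)^{2/p}\d s ,
$$
after using $(a+b)^2\le 2a^2+2b^2$ on the $L^p(\Omega)$-norms. Gronwall's lemma then gives $M(t)^{2/p}\le 2\|f\|_\infty^2 e^{Ct}$ on $[0,T]$, which is \eqref{eq:E-vp} with $C=C(d,p,T,\|\cC\|_\infty)$. In fact the bound is independent of $d$.

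The main obstacle is making the Gronwall step rigorous. For that we need to know a priori that $M$ is finite and measurable on $[0,T]$. We plan to handle this by running the same estimate on the Picard iterates $v^{(n+1)}(t,x)=P_tf(x)+\int_0^t\int p\,v^{(n)}W$, starting from $v^{(0)}=P_tf$. By induction each iterate has $\sup_{t\le T,x}\bE|v^{(n)}|^p<\infty$, and the same BDG computation gives the bound
$$
M_{n+1}(t)^{2/p}\le 2\|f\|_\infty^2+C\int_0^tM_n(s)^{2/p}\d s .
$$
Iterating this yields a uniform-in-$n$ bound of the form $2\|f\|_\infty^2e^{Ct}$. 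The iterates converge to the unique mild solution $v$ in $L^2(\Omega)$ for each $(t,x)$, which follows from the same estimate applied to the differences with $p=2$. Fatou's lemma then transfers the bound to $v$.
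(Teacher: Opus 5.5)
Your argument is correct, and its core (mild form, BDG, $|\cC|\le\|\cC\|_\infty$, Jensen against the heat kernel, Gronwall for the spatial supremum of the $p$-th moments) is the same as the paper's. The differences lie in two places.

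First, you handle the time integral by Minkowski's inequality in $L^{p/2}(\Omega)$ and run Gronwall for $M(t)^{2/p}$. The paper instead applies H\"older in time, which produces a harmless factor $t^{(p-2)/2}$, and then Jensen. This gives a linear inequality $h(t,x)\le C\|f\|_\infty^p+C\int_0^t (p_{t-s}*h(s))(x)\,\d s$ for $h=\bE|v|^p$ itself.

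Second, and more substantively, you justify the a priori finiteness of $\sup_{t\le T,x}\bE|v(t,x)|^p$ differently. You run the estimate along Picard iterates and pass to the limit by Fatou; an induction $M_n(s)^{2/p}\le 2\|f\|_\infty^2e^{Cs}$ suffices, so no separate Gronwall step is needed. The paper works with the given solution. It obtains finiteness from the $L^q$ bounds of Lemma \ref{lem:spde}(ii), which give $h\in L^\infty([0,T];L^d(\mR^d))$, combined with $\|p_{t-s}*h(s)\|_\infty\le C(t-s)^{-1/2}\|h(s)\|_{L^d}$.

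What your route buys: it is self-contained and does not use Appendix \ref{Sec:App-SPDE}. It uses only $\|f\|_\infty$, so it covers $f\in C_b(\mR^d)$ directly. The paper needs $f\in L^q$ for all $q$ and reaches $C_b$ only by approximation (Remark \ref{rmk:initial-bdd}).

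What it costs: your Picard limit is identified with $v$ only through uniqueness of mild solutions among random fields with $\sup_{t\le T,x}\bE|v(t,x)|^2<\infty$. If $v$ is regarded as the $C([0,T];L^2)$-valued solution of Lemma \ref{lem:spde}, membership in that class is not known beforehand; it is the case $p=2$ of the lemma. Placing $v$ in that class is exactly what the paper's a priori step accomplishes.
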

  \begin{proof}
    Recall that
  	$$
  	v(t,x)=P_tf(x)+\int_0^t\!\!\int_{\mathbb{R}^d}p(t-s,x-z)v(s,z)W(\d s,z)\d z.
  	$$
  	Set
  	$$
  	Y(t,x; r):=\int_0^r\!\!\int_{\mR^d} p(t-s,x-y)v(s,y)W(\d s,y)\d y, \quad r\in [0,t],
  	$$
  	and $Y(t,x):=Y(t,x;t)$. Noting that $r\mapsto Y(t,x; r)$ is a continuous martingale, by Burkholder-Davis-Gundy inequality,
  Jensen's inequality and the boundedness of $\cC$, there is a constant
$C =C(d,p)$, whose exact value may change from line to line, so that
  	\begin{equation*}
  		\begin{aligned}
  			&h(t,x):=\bE \left[  | v(t,x) |^p \right]\leq C (P_t f)^p(x) + C \bE\left[ \<Y(t,x;\cdot)\>_t^{p/2} \right] \\
  			\leq& C \|f\|_\infty^p+C \bE  \left[ \left( \int_0^t \d s\!\! \int_{\mathbb{R}^{2d}}p(t-s,x-y)p(t-s,x-z) v(s,y) \cC (y,z) v(s,z) \d y\d z \right)^{p/2} \right]\\
  			\leq& C \|f\|_\infty^p  +C t^{(p-2)/2}\|\cC\|^{p/2}_\infty \int_0^t \d s \int_{\mR^{2d}} p(t-s,x-y)p(t-s,x-z)  [h(s, y)+ h(s, z) ] \d y \d z \\
  			\leq & C \|f\|_\infty^p+ C t^{(p-2)/2}\|\cC\|^{p/2}_\infty \int_0^t  \left[p_{t-s}*h(s)\right](x) \d s.
  		\end{aligned}
  	\end{equation*}
  	Since $f\in C_c^+ (\R^d)$, by  Lemma \ref{lem:spde}(ii), $h\in L^\infty([0,T]; L^q(\mR^d))$
	 for any $q\in [1,\infty)$. Thus,
	 $$
	  \Big \| \int_0^t [p_{t-s}*h(s)] \d s \Big\|_{\infty}
	  \leq C\|h\|_{L^\infty([0,T]; L^d(\mR^d))} \int_0^t s^{-1/2}\d s
	 =2 C t^{1/2} \|h\|_{L^\infty([0,T]; L^d(\mR^d))} .
	 $$
		 Therefore,
  	$$
  	  \|h(t)\|_\infty \leq C \|f\|_\infty^p + C t^{(p-1)/2}\|\cC\|^{p/2}_\infty \int_0^t \| h(s) \|_\infty \d s,
  	$$
  	which  together with Gronwall's inequality yields \eqref{eq:E-vp}.
  \end{proof}

  Next we study the continuity and growth rate of the solution $v$ to the linear SPDE \eqref{eq:PAM}.
  \begin{lemma}\label{lem:control-v}
  	Let $T>0$, $\gamma\in (0, 1)$. Suppose $f\in C_c^+  (\mR^d) $
  	and $v$ is the unique solution of \eqref{eq:PAM}. Then for any $p\in\mN$,
	 there exists constant $C= C(d, p,\gamma, T, \|\cC\|_\infty, \|f\|_\infty)>0$ so that
	 for any $t\in [0,T]$ and \(x, \bar x\in \mR^d\),
  	\begin{equation}\label{eq:v-holder}
		\begin{aligned}
			\bE \left[ \big|\l[v(t,x)-P_tf(x)\r]-\l[v(t,\bar x)-P_tf(\bar{x})\r]\big|^{2p} \right] \leq C |x-\bar{x}|^{2\gamma p}.
		\end{aligned}
  	\end{equation}
  	Consequently, for any $\rho>0$,
  	\begin{equation}\label{eq:v-growth}
  		\bE  \left[ \left( \sup_{x\in \mR^d} |v(t,x) |
		\phi_\rho(x)
		\right)^{2p}\right] \leq C(d, p,\rho,T, \|\cC\|_\infty, \|f\|_\infty) <\infty  \quad  \hbox{for }  t\in [0,T],
  	\end{equation}
  where $\phi_\rho$ is defined by \eqref{def-phi}.
  \end{lemma}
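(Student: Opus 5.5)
We will prove \eqref{eq:v-holder} by writing the stochastic convolution as a difference of kernels and then combining BDG with the moment bound of Lemma \ref{lem:Evp}. Set $Y(t,x)=v(t,x)-P_tf(x)=\int_0^t\!\int p(t-s,x-z)v(s,z)W(\d s,z)\d z$. For fixed $t$, the quantity $Y(t,x)-Y(t,\bar x)$ is the terminal value of a continuous martingale in $r\in[0,t]$, with integrand kernel $k(s,z):=p(t-s,x-z)-p(t-s,\bar x-z)$.

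By the Burkholder--Davis--Gundy inequality, $\bE|Y(t,x)-Y(t,\bar x)|^{2p}$ is at most $C\,\bE\big[\big(\int_0^t\d s\int_{\mR^{2d}}|k(s,z)||k(s,w)|\,v(s,z)v(s,w)|\cC(z,w)|\,\d z\d w\big)^{p}\big]$. We bound $|\cC|\le\|\cC\|_\infty$ and apply Minkowski's inequality in $L^p(\bP)$. Then Hölder gives $\|v(s,z)v(s,w)\|_{L^p(\bP)}\le \sup\|v\|_{L^{2p}(\bP)}^2\le C\|f\|_\infty^2$, using Lemma \ref{lem:Evp}. The result is
\[
\bE|Y(t,x)-Y(t,\bar x)|^{2p}\le C\Big(\int_0^t \Big(\int_{\mR^d}|p(s,x-z)-p(s,\bar x-z)|\d z\Big)^2\d s\Big)^{p}.
\]

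It remains to control the time integral. By the standard heat-kernel estimate, $\int|p(s,x-z)-p(s,\bar x-z)|\d z\le C\min(1,|x-\bar x|s^{-1/2})\le C|x-\bar x|^{\gamma}s^{-\gamma/2}$. Squaring gives $s^{-\gamma}$, which is integrable on $[0,T]$ because $\gamma<1$. This yields the bound $C|x-\bar x|^{2\gamma p}$, i.e.\ \eqref{eq:v-holder}. Note that we use the $L^1$ kernel difference, not an $L^2$ one, since $v$ is controlled only in $L^\infty_x$ moments. This is the one point requiring care, and it is exactly what lets the range $\gamma<1$ work.

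For \eqref{eq:v-growth}, note first that $|P_tf|\le\|f\|_\infty$, so it suffices to treat $Y(t,\cdot)$. Cover $\mR^d$ by unit cubes $Q_k$, $k\in\mZ^d$. On each cube, Kolmogorov's continuity theorem (Garsia--Rodemich--Rumsey) applies once $p$ is chosen large enough that $2\gamma p>d$. Together with $\bE|Y(t,x)|^{2p}\le C$, which follows from Lemma \ref{lem:Evp}, it gives $\bE\sup_{x\in Q_k}|Y(t,x)|^{2p}\le C$, uniformly in $k$. Since $\phi_\rho\le C(1+|k|)^{-\rho}$ on $Q_k$, we get
\[
\bE\Big(\sup_x|Y(t,x)|\phi_\rho(x)\Big)^{2p}\le\sum_k C(1+|k|)^{-2p\rho}\,\bE\sup_{Q_k}|Y|^{2p},
\]
which is finite once $2p\rho>d$. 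Smaller $p$ then follow by Jensen's inequality. The main obstacle is this uniform-in-space summation, and it is resolved by taking $p$ large.
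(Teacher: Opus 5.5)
Your proposal is correct.

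\textbf{First part.} For \eqref{eq:v-holder} your argument is essentially the paper's: BDG, the bound $|\cC|\le\|\cC\|_\infty$, H\"older together with Lemma \ref{lem:Evp} to remove $v$, and a heat-kernel difference estimate. The paper uses the pointwise bound \eqref{eq:df-p} and expands the $p$-th power as a $p$-fold integral, where you use Minkowski in $L^p(\bP)$. Integrating \eqref{eq:df-p} in $z$ gives exactly your $L^1$ bound $C|x-\bar x|^\gamma s^{-\gamma/2}$, so the two computations coincide.

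\textbf{Second part.} For \eqref{eq:v-growth} your localization genuinely differs from the paper's. The paper controls the H\"older seminorms $M^k_\alpha$ of $Y(t,\cdot)$ on the dyadic balls $B_{2^k}(0)$. It rescales each ball to the unit ball to apply Garsia--Rodemich--Rumsey, obtaining $\bE (M^k_\alpha)^{2p}\le C2^{2(2\gamma-\alpha)kp}$. It then chains from the origin via $\sup_x|Y(t,x)|\phi_\rho(x)\le |Y(t,0)|+\sum_k M^k_\alpha 2^{k(\alpha-\rho)}$ and sums in $L^{2p}$ by the triangle inequality, which forces the choice $2\gamma<\rho$.

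You instead partition $\mR^d$ into unit cubes and bound the supremum over cubes by a sum. This uses two facts: \eqref{eq:v-holder} is uniform under translations, and $\sup_x\bE|Y(t,x)|^{2p}<\infty$. Together they give every cube the same moment bound. The cost is the condition $2p\rho>d$, which is harmless: both arguments already take $p$ large for GRR and recover small $p$ by Jensen.

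Your version is shorter and avoids the rescaling and the bookkeeping of how H\"older constants grow on large balls. The paper's chaining yields that polynomial growth of the local H\"older seminorm as a by-product, but the lemma needs nothing beyond the weighted moment bound.
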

  \begin{proof}
  	Noting that for $0<\gamma<1$, there exists $C(\gamma)>0$ such that
  	\begin{equation}\label{eq:df-p}
  		|p(t,x)-p(t,y)|\leq C |x-y|^\gamma t^{-\gamma/2} (p(t,x)+p(t,y)).
  	\end{equation}
	(see \cite{chen2017heat} for instance).
  	Let $Y(t,x; r)$ and $Y(t,x)$ be the same functions as in the proof of Lemma \ref{lem:Evp}. By Burkholder-Davis-Gundy inequality, for any  $x, \bar{x}\in \mR^d$ and $p\in \mN$, we have
  	\begin{equation*}
  		\begin{aligned}
			&\bE \left[ \left|\l[v(t,x)-P_tf(x)\r]-\l[v(t,\bar x)-P_tf(\bar{x})\r]\right|^{2p} \right]\\
  			= &\bE  \left[ |Y(t,x)-Y(t,\bar{x})|^{2p} \right] \leq C(p) \bE \left[ \<Y(t,x;\cdot)-Y(t,\bar x;\cdot)\>_t^p \right]\\
  			\leq &C \bE   \left[ \l\<\int_0^{\cdot}\int_{\mR^d} p(t-s,x-y)-p(t-s,\bar{x}-y)v(s,y)W(\d s,y)\d y\r\>_t^{p} \right] \\
  			\leq &C \bE  \left[ \Big|\int_0^t \d s\int_{\mR^{2d}}[p(t-s,x-y)-p(t-s,\bar{x}-y)][p(t-s,x-z)-p(t-s,\bar{x}-z)] \right.\\
  			&  \hskip 0.8truein  \left. \times v(s,y)v(s,z)\cC (y,z)\d y\d z \Big|^{p}\right] \\
  			\overset{\eqref{eq:df-p}}{\leq} & C  |x-\bar{x}|^{2\gamma p} \bE   \left[\Bigg( \int_0^t |t-s|^{-\gamma} \d s\int_{\mR^{2d}} \l[ p(t-s,x-y)+p(t-s,\bar{x}-y)\r] \right.\\
  			&   \hskip 0.8truein  \left. \cdot \l[ p(t-s,x-z)+p(t-s,\bar{x}-z)\r]
			 | v(s,y)v(s,z)|  \d y\d z \Bigg)^{p} \right]\\
			\leq & C\, |x-\bar{x}|^{2\gamma p} \int_{[0,t]^p} \prod_{i=1}^p |t-s_i|^{-\gamma} \, \mathrm{d} s_1 \cdots \mathrm{d} s_p\times\\
			&\int_{\mR^{pd}} \int_{\mR^{pd}} \prod_{i=1}^p \Big\{\l[p(t-s_i,x-y_i)+p(t-s_i,\bar{x}-y_i)\r]\cdot \l[ p(t-s_i,x-z_i)+p(t-s_i,\bar{x}-z_i)\r] \Big\}\\
  			& \hskip 0.5in \bE  \prod_{i=1}^p  |  v(s_i,y_i)v(s_i,z_i)  |   \d y_1 \cdots \d y_p ~ \d z_1 \cdots \d z_p .
  		\end{aligned}
  	\end{equation*}
  	On the other hand, by \eqref{eq:E-vp} and H\"older's inequality, we have
  	\begin{equation*}
  		\begin{split}
  			\bE  \prod_{i=1}^p  |  v(s_i,y_i)v(s_i,z_i)  |
			\leq &  \prod_{i=1}^p  \left( \bE\left[ | v (s_i,y_i) |^{2p}\right]\right)^{1/(2p)}
			\left(\bE \left[ | v(s_i,z_i)|^{2p} \right]\right)^{1/(2p)}
			\overset{\eqref{eq:E-vp}}{\leq} \|f\|_\infty^{2p}.
  		\end{split}
  	\end{equation*}
    	The above two estimates imply that
  	\begin{equation}\label{eq:Yholder}
  		\begin{aligned}
  			&\bE  \left[ |Y(t,x)-Y(t,\bar{x})|^{2p} \right] \leq C( |x-\bar{x}|^{2\gamma p} \l(\int_0^t |t-s|^{-\gamma}\d s\r)^p
  			  			\leq C |x-\bar{x}|^{2\gamma p},
  		\end{aligned}
  	\end{equation}
	which gives \eqref{eq:v-holder}.

  	\smallskip

  	To prove \eqref{eq:v-growth}, we define
  	$$
  	  M_\alpha^k:=\sup_{x\not=y \hbox{ \tiny in } B_{2^k}(0)} \frac{|Y(t,x)-Y(t,y)|}{|x-y|^\alpha},
  	$$
  	where $0<\alpha<\gamma <1/2$. Put $\tilde x=2^{-k} x$ and $\widetilde Y(\tilde x)=\frac{Y(t, x)}{2^{2k\gamma}}=\frac{Y(t, 2^k\tilde x)}{2^{2k\gamma}}$. By \eqref{eq:Yholder}, for any $\tilde x, \tilde y\in B_{1}(0)$,  it holds that for any $k\geq 0$,
  	\begin{equation}\label{eq:tYholder}
  		\bE \left[ |\widetilde Y(\tilde x)-\widetilde Y(\tilde y)|^{2p} \right]\leq C_1 2^{-4k \gamma p}|x-y|^{2\gamma p}2^{2k\gamma p}\leq C_1|\tilde x-\tilde y|^{2\gamma p}.
  	\end{equation}
  	We may choose $p$ large enough such that $d/(2p)<\gamma$. By Garsia-Rodemich-Rumsey inequality, for each $\alpha\in(0, \gamma -d/(2p))$,
  	\begin{align*}
  		&\bE  \left[ \left( \sup_{\tilde{x}, \tilde{y}\in B_1(0)}\frac{|\widetilde Y(\tilde x)-\widetilde Y(\tilde y)|}{|\tilde x-\tilde y|^\alpha}\right)^{2p} \right] \leq C \bE \iint_{B_1(0)\times B_1(0)} \frac{|\widetilde{Y}(\tilde x)-\widetilde{Y}(\tilde y)|^{2p}}{|\tilde{x}-\tilde{y}|^{2d+2\alpha p}}\d \tilde{x}\d \tilde{y}\\
  		\overset{\eqref{eq:tYholder}}{\leq}& C\iint_{B_1(0)\times B_1(0)} |\tilde{x}-\tilde{y}|^{2(\gamma-\alpha)p-2d}
\d \tilde{x} \d \tilde{y}
  \leq C \int_0^2 r^{2(\gamma-\alpha)p-d-1} \d r\leq C,
  	\end{align*}
  	which implies that
  	\begin{equation}\label{eq:mean-M}
  		\bE \left[ \left( M^k_\alpha \right)^{2p} \right] \leq C 2^{2(2\gamma-\alpha)kp}.
  	\end{equation}
  	This yields that
  	\begin{align*}
  		&\sup_{x\in \mR^d}\l|Y(t,x) \phi_\rho(x)\r|\leq |Y(t,0)|+\sup_{x\in \mR^d} \l|Y(t,x)-Y(t,0)\r|\phi_\rho(x)\\
  		\leq& |Y(t,0)|+\sum_{k=0}^\infty \sup_{x\in B_{2^k}(0)} M^k_\alpha |x|^{\alpha}\phi_\rho(x)\leq |Y(t,0)|+\sum_{k=0} M^k_\alpha 2^{k(\alpha-\rho)}.
  	\end{align*}
Combing this with \eqref{eq:mean-M}, using  the triangle inequality for the $L^{2p}$-norm,
and choosing \(p, \alpha, \gamma\) such that $d/(2p)<\alpha+d/(2p)<\gamma<1/2 \wedge \rho/2$,    we have
  	\begin{align*}
  		&\l[ \bE \l(\sup_{x\in\mR^d} |Y(t,x)\phi_\rho(x)|^{2p}\r) \r]^{1/(2p)} \\
		\leq&  \l[ \bE \left( |Y(t,0)|^{2p} \right) \r]^{1/(2p)}+  \sum_{k=1}^\infty \left\{ \bE \left[ \left( M^k_\alpha \right)^{2p} \right] \right\}^{1/(2p)}  2^{k(\alpha-\rho)}\\
		\leq &  \l[ \bE \left( |Y(t,0)|^{2p} \right) \r]^{1/(2p)}+C \sum_{k=0}^\infty 2^{(2\gamma-\rho)k}\\
  		\leq  C+ \l[ \bE \left(|Y(t,0)|^{2p}\right) \r]^{1/(2p)}.
  	\end{align*}
  	Then estimate \eqref{eq:v-growth} follows by using Lemma \ref{lem:Evp} and noting that $v(t,x)=P_tf(x)+Y(t,x)$
    with $P_tf $ being bounded.
  \end{proof}

  \begin{remark}\label{rmk:initial-bdd}
       Leveraging Lemma \ref{lem:comparion}, Remark \ref{rmk:bdd-initial} and the monotone convergence theorem, it becomes evident that all the conclusions in Lemma \ref{lem:Evv}, \ref{lem:Evp} and \ref{lem:control-v} hold for all $f\in C_b(\mR^d)$.
  \end{remark}

  Recall that $\phi_\rho(x)=(1+|x|^2)^{- \rho/2}$. When $0\leq f\leq C\phi_\rho$    for some
  $\rho>0$, we can refine the estimate \eqref{eq:v-growth} further. To do this, the following auxiliary result is needed.
    \begin{lemma}
  	  For any $\rho, T>0$, there is a constant $C=C(d ,\rho,T)$ such that
   \begin{equation}\label{eq:Ptphi}
  		P_t\phi_{\rho} \leq C \phi_{\rho},\quad \forall t\in [0,T].
  	\end{equation}
  \end{lemma}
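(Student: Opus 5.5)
We need $P_t\phi_\rho(x)\le C\phi_\rho(x)$ uniformly in $t\in[0,T]$ and $x\in\R^d$. The plan is to split the Gaussian integral according to whether the displacement is small or large compared with $1+|x|$.

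\emph{Key comparison inequality.} For all $x,z$ we have
\[
1+|x|^2\le 1+2|x-z|^2+2|z|^2\le 2(1+|z|^2)(1+|x-z|^2).
\]
Raising to the power $\rho/2$ gives
\[
\phi_\rho(z)\le 2^{\rho/2}\,\phi_\rho(x)\,(1+|x-z|^2)^{\rho/2}.
\]
This is Peetre's inequality, and it does all the work.

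\emph{Integrate against the heat kernel.} Write $P_t\phi_\rho(x)=\int p(t,x-z)\phi_\rho(z)\,\d z$ and insert the bound above:
\[
P_t\phi_\rho(x)\le 2^{\rho/2}\phi_\rho(x)\int_{\R^d}p(t,y)(1+|y|^2)^{\rho/2}\,\d y
=2^{\rho/2}\phi_\rho(x)\,E\big[(1+t|Z|^2)^{\rho/2}\big],
\]
where $Z$ is a standard Gaussian vector in $\R^d$. For $t\le T$ the expectation is at most $E[(1+T|Z|^2)^{\rho/2}]<\infty$, because Gaussian moments of all orders are finite. Hence we may take
\[
C=2^{\rho/2}E\big[(1+T|Z|^2)^{\rho/2}\big].
\]
The case $t=0$ is trivial.

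The only potential obstacle is the uniformity in $x$ when $|x|$ is large. A naive split into $|z|\ge |x|/2$ and $|z|<|x|/2$ would require Gaussian tail bounds of the form $\e^{-|x|^2/(8T)}\lesssim\phi_\rho(x)$. Peetre's inequality avoids that split entirely, so no step of the argument is delicate.
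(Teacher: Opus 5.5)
Your proof is correct, but it takes a different route from the paper.

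\textbf{The paper's route.} The paper first handles $|x|\le 1\vee T$ with the trivial bound $P_t\phi_\rho\le 1$. For $|x|>1\vee T$ it rescales $y=\sqrt t z$ and splits the Gaussian integral into $\{|z|\le|x|\}$ and $\{|z|>|x|\}$. The inner piece is bounded by $C\phi_\rho(x)$, and the outer piece by a Gaussian tail that decays faster than any power of $|x|$. This is essentially the splitting argument you set aside.

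\textbf{What your route buys.} Your Peetre inequality replaces both the case distinction and the splitting with one pointwise comparison. It gives the explicit constant $C=2^{\rho/2}E[(1+T|Z|^2)^{\rho/2}]$. It uses only that the kernel has finite moments of order $\rho$, uniformly for $t\le T$. The splitting argument, by contrast, needs the superpolynomial decay of the Gaussian tail.

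\textbf{A soft spot it avoids.} As literally written, the paper's inner bound $\phi_\rho(x-\sqrt t z)\le C\phi_\rho(x)$ on $\{|z|\le|x|\}$ fails when $t\ge 1$: take $z=x/\sqrt t$, so that $x-\sqrt t z=0$. To repair it, one must shrink the inner region to something like $\{|z|\le |x|/(2\sqrt T)\}$. Then exactly the tail estimate $\e^{-c|x|^2/T}\lesssim\phi_\rho(x)$ that you mention is needed for the remainder.

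Both routes prove the lemma; yours is shorter and has no delicate step.
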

  \begin{proof}
  	For any $t\in[0,T]$ and $|x|> 1\vee T$,
  	\begin{align*}
  		P_t\phi_{\rho}(x)&=(2\pi t)^{-d/2}\int_{\mR^d} \phi_{\rho}(x-y)\e^{-\frac{|y|^2}{2t}}\d y\\
  		&=(2\pi )^{-d/2}\int_{|z|\leq |x|}\phi_{\rho}(x-\sqrt{t}z)\e^{-|z|^2/2}\d z+(2\pi )^{-d/2}\int_{|z|> |x|}\phi_{\rho}(x-\sqrt{t}z)\e^{-|z|^2/2}\d z\\
  		&\leq C\phi_{\rho}(x)\int_{|z|\leq |x|}\e^{-|z|^2/2}\d z+C\int_{|z|> |x|}\e^{-|z|^2/ 2}\d z\\
  		&\leq C\phi_{\rho}(x)+C\int_{|x|}^\infty \e^{-r}\d r\leq C\phi_{\rho}(x).
  	\end{align*}
  	So \eqref{eq:Ptphi} holds for all $t\in [0,T]$ and $x\in \mR^d$, since $P_t \phi_\rho(x) \leq C$.
  \end{proof}

  The following result is an improvement of \eqref{eq:v-growth} when
  $f$ is dominated by some $\phi_\rho$.
  \begin{lemma}
  	Let $\rho, T>0$ and $p\in \mN$.
  If $0\leq f\leq N \phi_{\rho}$  for some constant $N>0$, then there is a constant $C=C(d, p, \rho, T, N, \|\cC\|_\infty)$ such that
  	\begin{equation}\label{eq:v-decay}
  		\bE  \left[ \left( \sup_{x\in \mR^d}  |v(t,x)  \phi^{-1}_{\rho/2}(x)|\right)^{2p} \right] \leq C(d, p,\rho,T, N, \|\cC\|_\infty)<\infty,  \quad  \forall  t\in [0, T].
  	\end{equation}
  \end{lemma}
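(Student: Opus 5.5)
The plan is a weighted version of the argument for Lemma \ref{lem:control-v}. Write again $v(t,x)=P_tf(x)+Y(t,x)$. The deterministic part is handled by \eqref{eq:Ptphi}: since $0\le f\le N\phi_\rho$, we get $P_tf\le CN\phi_\rho\le CN\phi_{\rho/2}$. Note that $\phi_{\rho/2}^{-1}\phi_\rho=\phi_{\rho/2}$. So everything reduces to proving
\[
\bE\Big[\big(\sup_x |Y(t,x)|\phi_{\rho/2}^{-1}(x)\big)^{2p}\Big]\le C .
\]

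\emph{Step 1: pointwise moment decay.} First I show that $\bE|v(t,x)|^{2p}\le C\phi_\rho(x)^{2p}$ for $t\le T$. Running the BDG computation of Lemma \ref{lem:Evp} with $h(t,x)=\bE|v(t,x)|^{2p}$ gives
\[
h(t,x)\le C(P_tf)^{2p}(x)+C\int_0^t\!\!\int_{\mR^{2d}} p(t-s,x-y)p(t-s,x-z)\,[h(s,y)+h(s,z)]\,\d y\,\d z\,\d s .
\]
Set $H(t)=\sup_x h(t,x)\phi_\rho(x)^{-2p}$. This is finite a priori by Lemma \ref{lem:Evp}, at least after a truncation or localization argument. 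By \eqref{eq:Ptphi} applied with $2p\rho$ in place of $\rho$, we have $P_{t-s}\phi_{2p\rho}\le C\phi_{2p\rho}=C\phi_\rho^{2p}$. Hence $H(t)\le C+C\int_0^tH(s)\,\d s$, and Gronwall's inequality gives the claim.

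\emph{Step 2: local Hölder moments with decay.} Repeat the proof of \eqref{eq:v-holder}, but this time keep the Hölder/product bound on $\bE\prod|v(s_i,y_i)v(s_i,z_i)|$ through Step 1. This bounds that expectation by $C\prod\phi_\rho(y_i)\phi_\rho(z_i)$. Integrating against the heat kernels and using \eqref{eq:Ptphi} once more, for $|x|,|\bar x|\in[2^{k-1},2^{k+1}]$ we obtain
\[
\bE|Y(t,x)-Y(t,\bar x)|^{2p}\le C\,|x-\bar x|^{2\gamma p}\,2^{-2k\rho p}.
\]
The same argument together with Step 1 gives $\bE|Y(t,x)|^{2p}\le C\phi_\rho(x)^{2p}$.

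\emph{Step 3: chaining over dyadic shells.} Cover the annulus $A_k=\{2^{k-1}\le|x|\le2^k\}$ by $O(2^{kd})$ unit balls $B_j$ with centers $x_j$. On each ball, Garsia–Rodemich–Rumsey and Step 2 give $\bE\sup_{B_j}|Y|^{2p}\le C2^{-2k\rho p}$. A union bound (sum of moments) over the balls then gives
\[
\bE\sup_{A_k}|Y|^{2p}\le C\,2^{kd}\,2^{-2k\rho p}.
\]
Multiplying by $\sup_{A_k}\phi_{\rho/2}^{-2p}\approx 2^{k\rho p}$ leaves $C2^{k(d-\rho p)}$. This is summable once $p>d/\rho$. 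Summing over $k$, we bound $\bE\sup_x|Y\phi_{\rho/2}^{-1}|^{2p}$ by the sum over shells. This gives the result for large $p$, and then for all $p$ by Hölder's inequality.

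The main obstacle is Step 1 together with the a priori finiteness of $H$. The extra half of the decay, $\phi_{\rho/2}$ rather than $\phi_\rho$, is exactly what absorbs the polynomial number of balls in Step 3.
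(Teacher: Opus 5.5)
Your argument is correct, but it takes a different route from the paper.

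\textbf{The paper's route.} The paper conjugates by the weight. It sets $w:=v\phi_\rho^{-1}$, which solves a PAM with the bounded smooth drift $\nabla\phi_\rho/\phi_\rho$, the bounded potential $\frac12\Delta\phi_\rho/\phi_\rho$ and the bounded initial datum $f\phi_\rho^{-1}$. It quotes Gaussian upper bounds and H\"older bounds for the heat kernel of $\partial_t-\frac12\Delta-\frac{\nabla\phi_\rho}{\phi_\rho}\cdot\nabla$ from the heat-kernel literature. It then reruns the proof of Lemma~\ref{lem:control-v} for $w$ with growth weight $\phi_{\rho/2}$, and concludes from $v\phi_{\rho/2}^{-1}=w\phi_{\rho/2}$.

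\textbf{Your route.} You keep the Brownian kernel and carry the weight through each step:
\begin{itemize}
\item the weighted Gronwall bound in Step~1, which is exactly the statement $\sup_x\bE|w(t,x)|^{2p}\le C$ that the paper's route uses implicitly;
\item weighted H\"older increments obtained via \eqref{eq:Ptphi};
\item shell-by-shell chaining, where the $2^{kd}$ cost of the union bound is paid by the spare decay once $p>d/\rho$.
\end{itemize}

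\textbf{Comparison.} The paper's version is shorter because it recycles Lemma~\ref{lem:control-v} wholesale. The price is importing kernel estimates for a drifted operator and handling an extra zeroth-order term. Yours is self-contained: it uses only \eqref{eq:Ptphi} and the Gaussian-kernel facts already in the paper, and it shows where the decay is spent. Both routes actually give the weight $\phi_{\rho'}$ for any $\rho'<\rho$, not just $\rho/2$.

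\textbf{A point to tighten in Step 1.} The finiteness of $H(t)=\sup_x h(t,x)\phi_\rho^{-2p}(x)$ does not follow from Lemma~\ref{lem:Evp}, since that lemma only gives $\sup_{s\le T,x}h(s,x)<\infty$ and $\phi_\rho^{-2p}$ is unbounded. It is easy to bypass, in either of two ways.
\begin{itemize}
\item Iterate $h\le a+Kh$, where $a=CN^{2p}\phi_\rho^{2p}$ and $Kh(t,x)=C\int_0^tP_{t-s}h(s)(x)\,\d s$. Only the boundedness of $h$ is needed to kill the remainder, since $K^nh\le \|h\|_\infty (Ct)^n/n!$. Meanwhile $K^ja\le C(C't)^j/j!\,\phi_\rho^{2p}$ by \eqref{eq:Ptphi} with $2p\rho$ in place of $\rho$.
\item Alternatively, run your Gronwall argument with the bounded weight $(\phi_\rho^{2p}+\varepsilon)^{-1}$. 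The corresponding $H_\varepsilon$ is finite by Lemma~\ref{lem:Evp}, and \eqref{eq:Ptphi} still gives $P_t(\phi_\rho^{2p}+\varepsilon)\le C(\phi_\rho^{2p}+\varepsilon)$ uniformly in $\varepsilon$. Then let $\varepsilon\to0$.
\end{itemize}
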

  \begin{proof}
	Let \(w:=v\phi_{\rho}^{-1}\). Then \(w\) satisfies the following SPDE:
	\[
	\partial_t w = \frac{1}{2}\Delta w + \nabla w \cdot \frac{\nabla \phi_{\rho}}{\phi_{\rho}} + w \left( \frac{1}{2} \frac{\Delta \phi_{\rho}}{\phi_{\rho}} + \partial_t W \right), \quad w(0,x) = f(x) \phi_{\rho}^{-1}(x)\in C_b(\mR^d).
	\]
    Let \(q(t,x,y)\) be the heat kernel associated to the parabolic operator \(\partial_t - \frac{1}{2}\Delta - \frac{\nabla \phi_{\rho}}{\phi_{\rho}} \cdot \nabla\). Since \(b:=\frac{\nabla \phi_{\rho}}{\phi_{\rho}}\) and \(c:=\frac{\Delta \phi_{\rho}}{\phi_{\rho}}\) are smooth and bounded, thanks to \cite[Theorem 1.1]{chen2017heat}, for each \(t\in (0,T]\) and \(x,y\in \mR^d\), we have
	\[
	q(t,x,y)\leq C t^{-\frac{d}{2}} \e^{-\frac{|x-y|^2}{Ct}}\leq C p(Ct,x-y),
	\]
	for some \(C=C(d,T)>0\), and for each \(\gamma\in (0,1)\), there exists a constant \(C= C(d,\gamma, T)>0\) such that
	\[
	|q(t,x,y)-q(t,\bar{x},y)|\leq C |x-\bar{x}|^\gamma t^{-\gamma/2} (p(Ct,x-y)+p(Ct,\bar{x}-y)).
	\]
	Noting that
	\[
	w(t,x)=\int_{\mR^d} q(t,x,y) f(y)\phi_{\rho}^{-1}(y) \d y + \int_0^t\!\!\int_{\mR^d} q(t-s,x,y) w(s,y) [c(y) \d s+ W(\d s,y)]\d y.
	\]
	Then following the same argument as in Lemma \ref{lem:control-v}, we can see that
	\[
	\bE  \left[\left( \sup_{x\in \mR^d}  |v(t,x)  \phi^{-1}_{\rho/2}(x)|\right)^{2p}\right]=\bE  \left[\left| \sup_{x\in \mR^d} w(t,x)  \phi_{\rho/2}(x) \right|^{2p}\right]\leq C(d, p,\rho,T, N, \|\cC\|_\infty).
	\]
	So we obtain our desired assertion.
  \end{proof}

  \subsection{Derivative of the conditional log-Laplace}
  Now let $u(\lambda)$ be the solution to \eqref{eq:spde-clt} with initial date $u(\lambda; 0)=\lambda f\in C^+_c(\mR^d)$ and $v$ be the solution to \eqref{eq:PAM}. Below, we investigate the derivative of \(u(\lambda)\) with respect to \(\lambda\). As we will show in the next section, this derivative is crucial for deriving the moment formulas.

  \smallskip

  By Lemma \ref{lem:comparion},  $u(\lambda)\leq \lambda v$ and $u(\lambda)\leq u(\lambda+\delta)$, for any $\delta, \lambda\geq 0$. Put
  \begin{equation}\label{eq:def-wdelta}
  	 w_\delta(\lambda; t,x):=\frac{u(\lambda+\delta; t,x)-u(\lambda;t,x)}{\delta}, \quad \forall \delta>0
  \end{equation}
  Then $w_\delta(\lambda; t,x)\geq 0$ solves equation
  \begin{equation}\label{eq:wdelta}
  	\begin{cases}
  		\partial_t w_\delta(\lambda)=\frac{1}{2}\Delta w_\delta(\lambda)+w_\delta(\lambda){\p_t W}-k_{\delta}(\lambda),\\
  		w_\delta(\lambda,0)=f,
  	\end{cases}
  \end{equation}
  where
  $$
    k_{\delta}(\lambda):=\frac{1}{2\delta}\l(u^2(\lambda+\delta)-u^2(\lambda) \r) \geq 0.
  $$
  Again by Lemma \ref{lem:comparion},
  \begin{equation}\label{eq:w-less-v}
  	0\leq w_\delta(\lambda)\leq v,
  \end{equation}
  which yields
  $$
    \|u(\lambda+\delta)-u(\lambda)\|_{L^2([0,T]\times \mR^d\times\Omega)} \leq \delta \|v\|_{L^2([0,T]\times \mR^d\times\Omega)} \leq C(T) \delta, \quad \forall T>0.
  $$
  Hence, $u(\lambda): \mR_+\rightarrow L^2([0,T]\times\mathbb{R}^d\times\Omega)$ is a Lipschitz function. Therefore, $u(\lambda)$ has derivative in $L^2([0,T]\times\mathbb{R}^d\times\Omega)$, say $\p_\lambda u$, such that for a.e. $\lambda\in \mR_+$,
  \begin{equation}\label{eq:converge-u'}
  	\|w_\delta(\lambda)-\p_\lambda u(\lambda)\|_{L^2([0,T]\times \mR^d\times\Omega)}\rightarrow 0  \mbox{ as }\delta\to 0,
  \end{equation}
  and for any $\lambda>0$,
  $$u(\lambda;\cdot,\cdot)=\int_0^\lambda \p_\lambda u(\tau; \cdot,\cdot) \, \d\tau \quad
  \mbox{in } L^2([0,T]\times \mathbb{R}^d), \quad\mbox{a.s.}
  $$
  By the same argument,
    one gets that for any $\lambda\geq 0$,
  \begin{equation}\label{eq:wtoDu-L4}
	w_\delta(\lambda) \hbox{ converges to }    \p_\lambda u(\lambda)  \hbox{ in } L^4([0,T]\times\mR^d\times\Omega) \hbox{ as } \delta \to 0.
  \end{equation}

  Next we derive the equation for $\p_\lambda u$. For each fixed $\lambda\geq 0$, consider the following linear SPDE with respect to $w(\lambda; \cdot,\cdot)$:
  \begin{equation}\label{eq:u'}
  	\begin{cases}
  		\partial_t w(\lambda)=\frac{\Delta}{2}w(\lambda)+w(\lambda)\p_t W-u(\lambda)\p_\lambda u(\lambda)
  		,\\
  		w(\lambda,0,x)=f(x).
  	\end{cases}
  \end{equation}
  Noting that $k_{\delta}(\lambda; t,x)=\frac{1}{2\delta}(u^2(\lambda+\delta; t, x)-u^2(\lambda; t, x))=\frac{1}{2}(u(\lambda+\delta; t, x)+u(\lambda; t, x))w_\delta(\lambda; t,x)$, we have
  \begin{eqnarray*}
  	&&\bE \int_0^T\!\!\int_{\mR^d}|k_\delta(\lambda; t,x)-u(\lambda;t,x)\p_\lambda u(\lambda;t,x)|^2\d x \d t\\
  	&\leq&
  	\bE\int_0^T\!\! \int_{\mR^d}|u(\lambda;t,x)|^2|w_\delta(\lambda; t,x)-\p_\lambda u(\lambda;t,x)|^2\d x \d t\\
  	&&+\frac{1}{4}\bE\int_0^T\!\!\int_{\mR^d} |(u(\lambda+\delta; t,x)-u(\lambda;t,x))|^2||w_\delta(\lambda; t,x)|^2\d x \d t\\
  	&\leq&
  	\left(\bE \int_0^T\!\!\int_{\mR^d}|u(\lambda;t,x)|^4\d x \d t\right)^{1/2}
	\left( \bE \int_0^T\!\!\int_{\mR^d}|w_\delta(\lambda; t,x)-\p_\lambda u(\lambda;t,x)|^4\d x \d t\right)^{1/2}\\
  	&& +\frac{1}{4}\left( \bE \int_0^T\!\!\int_{\mR^d}|(u(\lambda+\delta; t,x)-u(\lambda;t,x))|^4\d x \d t\right)^{1/2}
	\left( \bE \int_0^T\!\!\int_{\mR^d}|w_\delta(\lambda; t,x)|^4\d x \d t\right)^{1/2}\\
  	&\overset{\eqref{eq:wtoDu-L4}}{\longrightarrow}  & 0 \quad \mbox{as }\delta\rightarrow 0.
  \end{eqnarray*}
  This together with \eqref{eq:wdelta}, \eqref{eq:u'} and Lemma \ref{lem:spde} (ii) yields
  \begin{equation*}
  	\bE \left[ \sup_{0\leq t\leq T}\|w_\delta(\lambda; t)-w(\lambda; t)\|_2^2\right] +\bE \int_0^T\|w_\delta(\lambda; t)-w(\lambda; t)\|_{H^1}^2 \d t\rightarrow 0
  	,\quad \mbox{as }\delta\to 0.
  \end{equation*}
 Here $H^1$ denotes the Sobolev space
  $\{f\in L^2(\R^d): \nabla f \in L^2(\R^d)\}$ of order $(1, 2)$ equipped with the Hilbert norm
  $\| u\|_{H^1}:= \big(\int_{R^d} (|f (x) |^2 + |\nabla f (x) |^2 ) dx \big)^{1/2}$.
  This together with \eqref{eq:converge-u'} yields  $w(\lambda)=\p_\lambda u(\lambda)$ in $L^2([0,T]\times\mathbb{R}^d\times\Omega)$.
  Thus, $\p_\lambda u(\lambda)$ solves \eqref{eq:u'}, i.e.,
  $\p_\lambda u(\lambda)\geq 0$ satisfies
  \begin{equation}\label{eq:Eq-Du}
  	\begin{cases}
  		\partial_t [\p_\lambda u(\lambda)]=\frac{\Delta}{2}\p_\lambda u(\lambda)+\p_\lambda u(\lambda)\p_t W-u(\lambda)\p_\lambda u(\lambda),\\
  		\p_\lambda u(\lambda;0)=f.
  	\end{cases}
  \end{equation}
  Again by Lemma \ref{lem:comparion}, we have $0\leq \p_\lambda u(\lambda)\leq v$.

 \smallskip

  To sum up, we have the following.

  \begin{proposition}
    Let $f\in C_c^+(\mR^d)$. Then $0\leq u(\lambda)\leq \lambda v$, and there is a random function $\p_\lambda u: \overline{\mR}_+\times \overline{\mR}_+ \times \mR^d\times \Omega\to \mR$ such that
     \begin{equation}\label{eq:u'-less-v}
    	0\leq \p_\lambda u(\lambda) \leq v ~\mbox{ and }~   u(\lambda)= \int_0^\lambda \p_\lambda u(\tau) \d \tau.
    \end{equation}
    Moreover, $\p_\lambda u(\lambda)$ solves equation \eqref{eq:Eq-Du}.

  \end{proposition}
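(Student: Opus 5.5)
The proposition collects the preceding discussion, so the plan is to assemble those steps carefully, fixing one measurable version of $\p_\lambda u$ valid for every $\lambda\geq 0$.

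\emph{Step 1: the two comparison bounds.} First, $0\leq u(\lambda)\leq \lambda v$ follows from Lemma \ref{lem:comparion}. Indeed, $u(\lambda)$ is a nonnegative solution of \eqref{eq:spde-clt} with the extra term $-\frac12 u^2\leq 0$. On the other hand, $\lambda v$ solves the linear equation \eqref{eq:PAM} with the same initial datum $\lambda f$. Monotonicity $u(\lambda)\leq u(\lambda+\delta)$ comes from the same lemma, applied to ordered initial data. Next, the difference quotient $w_\delta(\lambda)$ from \eqref{eq:def-wdelta} solves \eqref{eq:wdelta} with a nonnegative sink $k_\delta(\lambda)$. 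Comparison with \eqref{eq:PAM} then gives \eqref{eq:w-less-v}, namely $0\leq w_\delta(\lambda)\leq v$.

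\emph{Step 2: definition of $\p_\lambda u$ and the integral identity.} Fix $T>0$. By Lemma \ref{lem:Evp} (extended to bounded data by Remark \ref{rmk:initial-bdd}), combined with the integrability in Lemma \ref{lem:spde}(ii), $v$ lies in $L^2\cap L^4([0,T]\times\mR^d\times\Omega)$. Step 1 then shows that $\lambda\mapsto u(\lambda)$ is Lipschitz into $L^2([0,T]\times\mR^d\times\Omega)$. This is a Hilbert space, which has the Radon--Nikodym property, so the map is a.e.\ differentiable and equals the integral of its derivative. That gives $u(\lambda)=\int_0^\lambda \p_\lambda u(\tau)\d\tau$. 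Since the cone $\{0\leq w\leq v\}$ is closed and convex, $L^2$-limits of the $w_\delta$ stay in it, so $0\leq \p_\lambda u\leq v$ a.e. Making $T$ arbitrary through a diagonal choice yields a jointly measurable random function on $\overline{\mR}_+\times\overline{\mR}_+\times\mR^d\times\Omega$.

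\emph{Step 3: convergence for every $\lambda$ and identification of the equation.} For each fixed $\lambda$, let $w(\lambda)$ be the solution of the linear SPDE \eqref{eq:u'}. The goal is to show that $w_\delta(\lambda)\to w(\lambda)$. Subtracting \eqref{eq:u'} from \eqref{eq:wdelta} leaves a linear SPDE with zero initial datum, driven by the forcing $k_\delta-u\,\p_\lambda u$. The energy estimate of Lemma \ref{lem:spde}(ii) controls the difference $w_\delta-w$ by the $L^2$ norm of this forcing. That forcing tends to $0$ by the Hölder computation given above, which uses the $L^4$ convergence \eqref{eq:wtoDu-L4} together with the $L^4$ bounds $u\leq\lambda v$ and $w_\delta\leq v$. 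Hence $w(\lambda)=\p_\lambda u(\lambda)$, so $\p_\lambda u$ solves \eqref{eq:Eq-Du}. A final application of Lemma \ref{lem:comparion} to \eqref{eq:Eq-Du} gives $0\leq\p_\lambda u(\lambda)\leq v$ for every $\lambda$, not merely a.e.; the sink $u\,\p_\lambda u$ is nonnegative once $\p_\lambda u\geq0$.

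\emph{Main obstacle.} The delicate point is \eqref{eq:wtoDu-L4} at \emph{every} $\lambda$, not just for almost every $\lambda$. I would handle it through monotonicity. Concavity of $\lambda\mapsto u(\lambda)$ follows from comparison, since the nonlinearity is $-\frac12u^2$. Therefore $w_\delta(\lambda)$ is monotone in $\delta$ and converges pointwise to the right derivative. Dominated convergence, with dominating function $v\in L^4$, then upgrades this to convergence in $L^4$, and the limit serves as the version of $\p_\lambda u$ used throughout.
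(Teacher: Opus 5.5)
Your proposal is correct and follows the paper's own argument essentially step by step. Both use comparison for $0\leq u(\lambda)\leq \lambda v$ and $0\leq w_\delta(\lambda)\leq v$, and Lipschitz dependence in $L^2([0,T]\times\mR^d\times\Omega)$ to get an a.e.\ derivative and the integral identity. Both then feed the H\"older bound on $k_\delta-u\,\p_\lambda u$ into the $L^2$ estimate of Lemma \ref{lem:spde}(ii) to identify \eqref{eq:Eq-Du}, and finish with a comparison.

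The one place you go beyond the paper is \eqref{eq:wtoDu-L4} at \emph{every} $\lambda$. The paper justifies this only ``by the same argument'', which in fact gives a.e.\ $\lambda$, yet the $\lambda=0$ case is used later. Your concavity fix is sound. Apply comparison to $s\,u(\lambda_1)+(1-s)\,u(\lambda_2)$: its extra sink $\tfrac12 s(1-s)(u(\lambda_1)-u(\lambda_2))^2$ is nonnegative and lies in $L^2$, since $u(\lambda_i)\leq\lambda_i v\in L^4$. This gives concavity of $\lambda\mapsto u(\lambda)$, so $w_\delta(\lambda)$ increases as $\delta\downarrow 0$. Checking this along a countable sequence and sandwiching intermediate $\delta$ handles the null-set issue. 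Domination by $v\in L^4$ then gives the $L^4$ limit for every $\lambda$.
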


  \section{Moment formulas for SBMRE}\label{Sec:MF}
  The moment formulas are very important in  studying the properties of classical superprocesses, for examples,  the existence of density, law of large number, local extinction and local exponential growth (see \cite{englander2007branching} and the reference therein). In this section, we employ the conditional Laplace transform to derive foundational formulas for the first two moments of SBMRE, which are also of great significance to our study. A similar methodology was previously employed in the study by Xiong \cite{Xiong2004SLL} concerning superprocesses over a stochastic flow.

  \smallskip

  Recall that $B_t$ and $B_t'$ are two independent $d$-dimensional standard Brownian motion starting respectively from
  $x,y\in\R^d$ under $P_{(x,y)}$. Let $Q_t$ be the transition semigroup of standard Brownian motion $(B_t, B'_t)$ on $\R^{2d}$.
  For any $F\in{\cB}(\mR^{2d})$, define
  $$
  Q^{\cC}_t F(x,y):=
  E_{(x,y)}\l[F(B_t,B'_t)\exp\left(\int_0^t \cC(B_s,B'_s)\d s\right)\r],\quad \forall x,y\in\mR^d.
  $$
  Let $\pi$ be a map from ${\cB}(\mR^{2d})$ to ${\cB}(\mR^{d})$ defined by
  $\pi F(x):=F(x,x)$, $\forall x\in\mR^d$.

  \begin{proposition}[Moment Formulas]\label{prop:moments}
  	Let $\nu\in \cM_F(\mR^d)$ and $f\in C_b(\mR^d)$. Then
  	\begin{equation}\label{eq:first}
  		\bE_\nu\langle f, X_t \rangle= \langle P_tf, \nu\rangle
  	\end{equation}
  	and
  	\begin{equation}\label{eq:second}
  		\bE_\nu \left[ \langle f, X_t \rangle^2\right] =\langle Q_t^{\cC} (f\otimes f), \nu\otimes\nu\rangle+\l\langle\int_0^t P_{t-s}(\pi Q_s^{\cC} (f\otimes f))\d s,\nu\r\rangle.
  	\end{equation}
  	
  \end{proposition}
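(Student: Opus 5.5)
We obtain both formulas by differentiating the conditional Laplace transform \eqref{eq:clt} in the parameter $\lambda$ and then averaging over $W$. It suffices to treat $f\in C_c^+(\mR^d)$: signed $f\in C_c$ then follows from $f=f^+-f^-$ by polarization, since both sides of \eqref{eq:second} are quadratic in $f$ (equivalently, bilinear in $(f,g)$). General $f\in C_b$ then follows by monotone approximation $f_n\uparrow f$ together with Remark \ref{rmk:initial-bdd}.

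Fix $f\ge 0$ and let $u(\lambda)$ solve \eqref{eq:spde-clt} with $u(\lambda;0)=\lambda f$. By \eqref{eq:clt},
$$\bP^W_\nu e^{-\lambda\<f,X_t\>}=e^{-\<u(\lambda;t),\nu\>}.$$
By \eqref{eq:u'-less-v} we have $0\le \p_\lambda u\le v$. Since $\nu$ is finite and $\bE[\sup_x v(t,x)^2]<\infty$ by \eqref{eq:v-growth} with $\rho=0$ (for bounded $f$, Remark \ref{rmk:initial-bdd}), we may differentiate under the integral at $\lambda=0$. This is justified by using $u(\lambda)=\int_0^\lambda\p_\lambda u$ and dominated convergence applied to difference quotients. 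The first derivative gives
$$\bE^W_\nu\<f,X_t\>=\<\p_\lambda u(0;t),\nu\>.$$
At $\lambda=0$ we have $u(0)\equiv0$, so \eqref{eq:Eq-Du} reduces to the PAM \eqref{eq:PAM}; hence $\p_\lambda u(0)=v$. Taking expectation and using $\bE v(t,x)=P_tf(x)$ (the stochastic integral in the mild formulation has mean zero, as it is an $L^2$ martingale by Lemma \ref{lem:Evp}) yields \eqref{eq:first}.

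For the second moment, we differentiate once more. Formally,
$$\bE^W_\nu\<f,X_t\>^2=\<v(t),\nu\>^2-\<\p^2_\lambda u(0;t),\nu\>.$$
Differentiating \eqref{eq:Eq-Du} in $\lambda$ at $0$ shows that $z:=-\p_\lambda^2u(0)$ solves
$$\p_t z=\tfrac12\Delta z+z\,\p_tW+v^2,\qquad z(0)=0.$$
Its mild form is
$$z(t,x)=\int_0^t\!\!\int p(t-s,x-y)v^2(s,y)\,\d y\,\d s+(\text{stochastic integral}).$$
Taking expectations gives
$$\bE z(t,x)=\int_0^tP_{t-s}\big(\bE v^2(s,\cdot)\big)(x)\,\d s.$$
By Lemma \ref{lem:Evv}, $\bE v^2(s,y)=\pi Q^\cC_s(f\otimes f)(y)$ and $\bE[v(t,x)v(t,y)]=Q_t^\cC(f\otimes f)(x,y)$. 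Integrating against $\nu\otimes\nu$ and $\nu$ respectively produces exactly the two terms of \eqref{eq:second}.

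The main obstacle is making the second derivative rigorous, since the preceding subsection only provides $\p_\lambda u$ as an $L^2$ limit. I would avoid defining $\p_\lambda^2u$ altogether and work with the difference quotient
$$\frac{v-\p_\lambda u(\lambda)}{\lambda}\ \ge0.$$
By \eqref{eq:Eq-Du} and \eqref{eq:PAM}, this solves the linear equation
$$\p_t r=\tfrac12\Delta r+r\,\p_tW+\frac{u(\lambda)}{\lambda}\,\p_\lambda u(\lambda),\qquad r(0)=0.$$
The forcing term is bounded by $v^2$ (since $u(\lambda)/\lambda\le v$) and converges to $v^2$ as $\lambda\to0$, in $L^2$ by the Lipschitz estimates and the monotone convergence $u(\lambda)/\lambda\uparrow v$. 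Lemma \ref{lem:spde} then gives convergence of $r$ to $z$.

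On the probabilistic side, I would use the elementary identity
$$\frac{1-e^{-\lambda a}-\lambda a e^{-\lambda a}}{\lambda^2}\uparrow \frac{a^2}{2}$$
(or compute $\frac{\d}{\d\lambda}$ of $\bE^W[\<f,X_t\>e^{-\lambda\<f,X_t\>}]=\<\p_\lambda u(\lambda),\nu\>e^{-\<u(\lambda),\nu\>}$). Combined with monotone convergence, this identifies $\bE^W\<f,X_t\>^2$ with the limit computed above without any a priori finiteness assumption. Finiteness of the resulting expression follows from $\|\cC\|_\infty<\infty$ via the explicit bound $Q^\cC_t\le e^{t\|\cC\|_\infty}Q_t$.
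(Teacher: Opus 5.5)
Your proposal is correct and is essentially the paper's own argument. Your quotient $r=(v-\partial_\lambda u(\lambda))/\lambda$ is exactly $-\eta_\delta$ in the paper's proof, with the same forcing $u(\lambda)\partial_\lambda u(\lambda)/\lambda\in[0,v^2]$, the same limiting linear equation forced by $v^2$, and the same use of $\partial_\lambda u(0)=v$ together with Lemma \ref{lem:Evv} to produce the two terms. Two details need tightening. First, Lemma \ref{lem:spde} only gives convergence of $r$ in $L^2(\d x)$, but you need $\langle r,\nu\rangle\to\langle z,\nu\rangle$ in $L^1(\bP)$ for a possibly singular $\nu$; this follows from the mild form as in Lemma \ref{lem:limit-u'}, or from $0\le r\le z$ and the explicit mean of $z-r$. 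Second, \eqref{eq:v-growth} requires $\rho>0$, but Lemma \ref{lem:Evp} already gives the integrability you use.
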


  Recall that $v$ is the unique solution of (\ref{eq:PAM}), $u(\lambda)$ is the solution to \eqref{eq:clt} with initial data $\lambda f$ and $w_\delta$ is defined in \eqref{eq:def-wdelta}.  For any $\nu\in\cM_F(\mR^d)$, the next lemma shows that $w_{\delta}(\lambda; t)\to \p_\lambda u(\lambda; t)$ and $\p_\lambda u(\delta; t)\to v(t)$ in $L^1(\nu\times \bP)$, as $\delta\to0$.
  \begin{lemma}\label{lem:limit-u'}
  	Let $f\in C_c^+(\mR^d)$, and $\nu\in\cM_F(\mR^d)$. Then for each $t\geq 0$ and $\lambda\geq 0$,
  	\begin{equation}\label{eq:limit-u'}
  		\lim_{\delta\to 0}\bE \int_{\mR^d}|w_\delta(\lambda; t,x)-  \partial_\lambda u (\lambda; t,x)|\nu(\d x)=0,
    \end{equation}
  	and
  	\begin{equation}\label{eq:Du-v}
  		\lim_{\delta\rightarrow 0}\bE \int|v(t,x)-  \partial_\lambda u (\delta; t,x)|^2\nu(\d x)= 0.
  	\end{equation}
  \end{lemma}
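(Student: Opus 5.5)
The approach is to use the pointwise bounds from the comparison principle together with $L^2$ convergence estimates, which can be upgraded to convergence against a finite measure $\nu$ at a fixed time $t$. The difficulty is that \eqref{eq:converge-u'} and \eqref{eq:wtoDu-L4} only give convergence in $L^p([0,T]\times\mR^d\times\Omega)$ with respect to Lebesgue measure. A general finite $\nu$ may be singular and $t$ is fixed, so these cannot be integrated against $\nu(\d x)$ directly. I will therefore work with the SPDEs satisfied by the differences and use their mild (heat-kernel) form. This gives pointwise-in-$x$ second moment bounds at time $t$, which can then be integrated against $\nu$.

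For \eqref{eq:limit-u'}, let $D_\delta:=w_\delta(\lambda)-\p_\lambda u(\lambda)$. Subtracting \eqref{eq:Eq-Du} from \eqref{eq:wdelta} gives $\p_t D_\delta=\frac12\Delta D_\delta+D_\delta\p_t W-(k_\delta(\lambda)-u(\lambda)\p_\lambda u(\lambda))$ with $D_\delta(0)=0$. Its mild form is
\[
D_\delta(t,x)=\int_0^t\!\!\int p(t-s,x-z)D_\delta(s,z)W(\d s,z)\d z-\int_0^t P_{t-s}(R_\delta(s))(x)\d s,
\]
where $R_\delta:=k_\delta(\lambda)-u(\lambda)\p_\lambda u(\lambda)=\frac12 u(\lambda)D_\delta+\frac12(u(\lambda+\delta)-u(\lambda))w_\delta$. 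Set $H_\delta(t):=\sup_x\bE|D_\delta(t,x)|^2$. The Itô isometry together with $\|\cC\|_\infty<\infty$ bounds the stochastic term by $\|\cC\|_\infty\int_0^t H_\delta(s)\d s$. For the drift term I use $0\le u(\lambda)\le\lambda v$ and $0\le w_\delta,\p_\lambda u\le v$, so $|D_\delta|\le v$, and $u(\lambda+\delta)-u(\lambda)=\delta w_\delta\le\delta v$. Lemma \ref{lem:Evp} then gives $\sup_x\bE[v^4]\le C$, so the second piece of $R_\delta$ contributes $O(\delta)$ in $L^2(\bP)$ uniformly in $x$. The first piece is controlled by Cauchy--Schwarz: $\bE|u D_\delta|^2\le \lambda^2(\bE v^4)^{1/2}(\bE D_\delta^4)^{1/2}$.

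This $L^4$ step is the main obstacle, since it closes the Gronwall argument only if one either controls $\bE D_\delta^4$ or avoids it. I would instead run the same argument for fourth moments: BDG replaces Itô, and $\sup_x\bE|u D_\delta|^4\le\lambda^4\sup_x(\bE v^8)^{1/2}(\bE D_\delta^8)^{1/2}$. To avoid an infinite regress, I would interpolate using $|D_\delta|\le v$. A cleaner alternative is to keep the drift as a linear term, $-\frac12 u(\lambda) D_\delta$, in a Feynman--Kac style bound. Since $u\ge0$, the term $-\frac12uD_\delta$ is dissipative, so by comparison (Lemma \ref{lem:comparion}) $|D_\delta|\le \tilde D_\delta$, where $\tilde D_\delta$ solves the PAM \eqref{eq:PAM} with zero initial data and forcing $\frac12(u(\lambda+\delta)-u(\lambda))w_\delta\le\frac{\delta}{2}v^2$. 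A Gronwall argument on $\sup_x\bE\tilde D_\delta^2$, using Lemma \ref{lem:Evp}, then gives $\sup_x\bE|D_\delta(t,x)|^2\le C(t)\delta^2$. Integrating against $\nu$ yields $\bE\int|D_\delta|\d\nu\le \nu(\mR^d)^{1/2}\cdots\to0$, which proves \eqref{eq:limit-u'} (in fact for every $\lambda$).

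For \eqref{eq:Du-v}, let $E_\delta:=v-\p_\lambda u(\delta)\ge0$. By \eqref{eq:PAM} and \eqref{eq:Eq-Du} it solves $\p_t E_\delta=\frac12\Delta E_\delta+E_\delta\p_t W+u(\delta)\p_\lambda u(\delta)$ with $E_\delta(0)=0$. The forcing satisfies $0\le u(\delta)\p_\lambda u(\delta)\le\delta v^2$. The Itô isometry, Lemma \ref{lem:Evp} with $p=4$, and Gronwall then give $\sup_x\bE E_\delta(t,x)^2\le C(t)\delta^2$, so integrating against $\nu$ gives a bound $\le C\delta^2\nu(\mR^d)\to0$. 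By Remark \ref{rmk:initial-bdd}, the moment bounds used here are available.
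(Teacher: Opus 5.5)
Your final argument is correct, but it takes a genuinely different route from the paper's. The paper proves no new estimate here. It writes the mild forms of $w_\delta(\lambda)$ and $\p_\lambda u(\lambda)$ and splits the difference into a drift part $I_1$ and a stochastic part $I_2$. It bounds the drift using only $u(\lambda)\le\lambda v$ and $w_\delta\le v$, and then feeds the already established space-time convergence \eqref{eq:converge-u'} into dominated convergence. The weight $p(t-s,x-y)\,\d y\,\d s$ from the mild form is exactly what turns the fixed-$(t,x)$ quantity into a space-time integral. So the obstacle you raise in your first paragraph is handled softly, with domination by $\sup\bE[v^2]$ from Lemma \ref{lem:Evp}. $I_2$ is treated the same way after BDG, and \eqref{eq:Du-v} is only asserted.

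You instead exploit the sign structure. The difference solves a PAM with the dissipative potential $-u(\lambda)\le 0$ and a forcing of size at most $\frac{\delta}{2}v^2$ (resp. $\delta v^2$ for $v-\p_\lambda u(\delta)$). It\^o isometry, $\sup_x\bE[v^4]<\infty$ and Gronwall then give $\sup_x\bE|D_\delta(t,x)|^2\le C(t)\delta^2$. This buys a rate and uniformity in $x$, so any finite $\nu$ is trivially admissible. It also gives an actual proof of \eqref{eq:Du-v}, and avoids leaning on \eqref{eq:converge-u'}, whose derivation from Lipschitz continuity in $\lambda$ a priori only covers a.e.\ $\lambda$. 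The price is that it hinges on the sign of the linear coefficient (concavity of $\lambda\mapsto u(\lambda)$), whereas the paper's argument only needs $|u(\lambda)|\le\lambda v$.

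Three small corrections.
\begin{enumerate}
\item The remainder is $R_\delta=u(\lambda)D_\delta+\frac{\delta}{2}w_\delta^2$, not $\frac12 u(\lambda)D_\delta+\cdots$, since $\frac12(u(\lambda+\delta)+u(\lambda))=u(\lambda)+\frac12(u(\lambda+\delta)-u(\lambda))$. Only the sign matters, so nothing breaks.
\item Discard the interpolation option. With $|D_\delta|\le v$, H\"older only gives $\bE|u(\lambda)D_\delta|^2\le C(\bE D_\delta^2)^{\theta}$ with $\theta<1$. The resulting sublinear Gronwall inequality does not force $H_\delta\to 0$; the dissipativity is the real mechanism.
\item $|D_\delta|\le\tilde D_\delta$ is not a literal instance of Lemma \ref{lem:comparion}, because $-u(\lambda)r\le 0$ only for $r\ge 0$. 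First get $D_\delta\le 0$ by comparing with the zero solution of the same linear equation (forcing $-\frac{\delta}{2}w_\delta^2\le 0$). Then either replace $-u(\lambda)r$ by $-u(\lambda)r^+$, or, more simply, drop $\tilde D_\delta$ and use the mild form directly: $0\le -D_\delta(t,x)\le -\int_0^t\!\int p(t-s,x-z)D_\delta(s,z)W(\d s,z)\d z+\frac{\delta}{2}\int_0^t P_{t-s}(w_\delta^2)(x)\d s$. This also supplies, via $|D_\delta|\le v$, the a priori finiteness of $\sup_x\bE D_\delta^2$ that Gronwall requires. You would otherwise have to establish it separately for $\tilde D_\delta$.
\end{enumerate}
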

  \begin{proof}
  	
  	Note that
  	\begin{align*}
  		w_\delta(\lambda; t,x)=&P_tf(x)+\int^t_0\int_{\mR^d}p(t-s, x-z)w_\delta(\lambda;s,z)W(\d s, z)\d z\\
  		&-\int^t_0P_{t-s}\left( \frac{u(\lambda;s)+u(\lambda+\delta;s)}{2}w_\delta(\lambda;s)\right)(x)\d s,
  	\end{align*}
  	and
  	\begin{align*}
  		\partial_\lambda u
  		(\lambda; t,x)=&P_tf(x)+\int^t_0\int_{\mR^d}p(t-s, x-z)
  		\partial_\lambda u
  		(\lambda;s,z)W(\d s, z)\d z\\
  		&-\int^t_0P_{t-s}(u(\lambda; s)
  		\partial_\lambda u
  		(\lambda;s))(x)\d s.
  	\end{align*}
  	Put
  	$$
  	  I_1(x):=\left|\int^t_0P_{t-s}\left( \frac{u(\lambda;s)+u(\lambda+\delta;s)}{2}w_\delta(\lambda;s)-u(\lambda;s)\partial_\lambda u(\lambda;s)\right)\d s\right|
  	$$
  	and
  	$$
  	  I_2(x):=\left|\int^t_0\!\!\int_{\mR^d}p(t-s, x,z)[w_\delta(\lambda;s,z)-
  	  \partial_\lambda u(\lambda;s,z)]W(\d s, z)\d z\right|
    $$
  	Then,
  	$$
  	  |w_\delta(\lambda; t,x)- \partial_\lambda u (\lambda; t,x)|\leq I_1(x)+I_2(x).
  	$$
  	Using the facts that $w_\delta(\lambda)\leq v$ and $u(\lambda)\leq \lambda v$,  we have
  	\begin{equation*}
  	  \begin{aligned}
  	  	&\bE \int_{\mR^d}I_1(x)\,\,\nu(\d x)\\
  	  	\leq &\bE \int_{\mR^d}\nu(\d x)\int_0^t \d s\int_{\mR^d}p(t-s,x-y)u(\lambda;s,y)| \partial_\lambda u  (\lambda;s,y)-w_\delta(\lambda;s,y)|\d y\\
  	  	&+\frac{\delta}{2}\bE \int_{\mR^d}\nu(\d x)\int_0^t \d s\int_{\mR^d}p(t-s,x-y)w^2_\delta(\lambda;s,y)\d y\\
  	  	\leq & \lambda \int_{\mR^d}\nu(\d x) \int_0^t \d s\int_{\mR^d}p(t-s,x-y)\bE \left( v(s,y)|\partial_\lambda u  (\lambda;s,y)-w_\delta(\lambda;s,y)|\right)\d y\\
  	  	&+\frac{\delta}{2} \int_0^t \langle P_{t-s}(\bE [v^2_s]),\nu\rangle \d s.
  	  \end{aligned}
  	\end{equation*}
  	Thanks to \eqref{eq:converge-u'}, for a.e  $(s,y)\in \mR_+\times\mR^d$,
  	$$
  	  \bE \left[ v(s,y)|{\partial_\lambda u} (\lambda;s,y)-w_\delta(\lambda;s,y)|\right]\rightarrow 0 \quad\mbox{as }\delta\to 0,
  	$$
  	by the dominated convergence theorem, we get
  	$$
  	  \lim_{\delta\to 0}\int_{\mR^d}\nu(\d x) \int_0^t \d s\int_{\mR^d}p(t-s,x-y)\bE  \l[ v(s,y)|{  \partial_\lambda u} (\lambda;s,y)-w_\delta(\lambda;s,y)| \r] \d y= 0.
  	$$
  	Therefore, $\lim_{\delta\to 0}\bE \int_{\mR^d}I_1(x)\,\,\nu(\d x)=0$. For $I_2(x)$, using Burholder-Davis-Gundy inequality and the same argument above, we also have $\lim_{\delta\to 0}\bE \int_{\mR^d}I_2(x)\,\,\nu(\d x)=0$.  So we finish the proof of \eqref{eq:limit-u'}.
  	
  	\smallskip
  	
  	Equation \eqref{eq:Du-v} can be obtained by using the similar arguments, so we omit its details here.
  \end{proof}

  \begin{proof}[Proof of Proposition \ref{prop:moments}]
    By the monotone convergence theorem, we only need to consider the case $f\in C^+_c(\mR^d)$. According to the conditional Laplace functional formula \eqref{eq:clt}, we have
  	\begin{equation}\label{eq:dlaplace}
  		\begin{aligned}
  			&-\frac{1}{\delta}\bE_\nu \left(\e^{-(\lambda+\delta)\langle f, X_t \rangle}-\e^{-\lambda\langle f, X_t \rangle}\right)\\
  			=&-\frac{1}{\delta}\bE \left(\e^{-\langle u(\lambda+\delta;t),\nu\rangle}-\e^{-\langle u(\lambda;t),\nu\rangle}\right)=\bE \left(\e^{-\langle u(\lambda;t),\nu \rangle}\frac{1-\e^{-\langle u(\lambda+\delta;t)-u(\lambda;t),\nu\rangle}}{\delta}\right).
  		\end{aligned}
  	\end{equation}
  	This together with \eqref{eq:limit-u'} and \eqref{eq:E-vp} implies
  	$$
  	  \lim_{\delta\to 0+}-\frac{1}{\delta}\bE_\nu \left(\e^{-(\lambda+\delta)\langle f, X_t \rangle}-\e^{-\lambda\langle f, X_t \rangle}\right)=\bE (\e^{-\langle u(\lambda;t),\nu \rangle}\< \p_\lambda u(\lambda;t),\nu\>)\leq \bE\<v(t), \nu\><\infty.
  	$$
  	In particular, for $\lambda=0$, by Fatou's lemma, we get
  	$$\bE_\nu\langle f, X_t \rangle=\bE_\nu\lim_{\delta\rightarrow 0^{+}}\frac{1}{\delta} \left(1-\e^{-\delta\langle f, X_t \rangle}\right)\leq\lim_{\delta\rightarrow 0^{+}}\frac{1}{\delta}\bE_\nu \left(1-\e^{-\delta\langle f, X_t \rangle}\right)<\infty,$$
  	Letting $\delta\to0$ in \eqref{eq:dlaplace}, by
  	the dominated convergence theorem,
  	\begin{equation}\label{eq:DLaplace}
  		\bE_\nu\left(\langle f, X_t \rangle \e^{-\lambda\langle f, X_t \rangle}\right)=\bE \left(\e^{-\langle u(\lambda;t),\nu\rangle}\langle \p_\lambda u(\lambda;t),\nu\rangle\right),\quad \forall \lambda\ge 0.
    \end{equation}
  	Letting $\lambda= 0$, we have
  	\begin{equation*}
  		\bE_\nu\langle f, X_t \rangle=\bE \langle \p_\lambda u(0, t), \nu\rangle=\bE \langle v(t), \nu\rangle=\langle P_tf, \nu\rangle.
  	\end{equation*}
  	
  	For the second order moment formula, by \eqref{eq:Du-v} and  \eqref{eq:DLaplace},
  	\begin{equation}\label{eq:D2u}
  		\begin{aligned}
  			&\lim_{\delta\rightarrow 0^{+}}\frac{1}{\delta}\bE_\nu\langle f, X_t \rangle\left(1-\e^{-\delta\langle f, X_t \rangle}\right)\\
  			\overset{\eqref{eq:DLaplace}}{=}&\lim_{\delta\rightarrow 0^{+}} \bE \left[\frac{-\e^{-\langle u(\delta;t),\nu\rangle}\langle \p_\lambda u(\delta;t),\nu\rangle+\langle \p_\lambda u(0;t),\nu\rangle}{\delta}\right]  \\
  			=&\lim_{\delta\rightarrow 0^{+}} \bE \left[\frac{\left(1-\e^{-\langle u(\delta;t),\nu\rangle}\right)\langle \p_\lambda u(\delta;t),\nu\rangle}{\delta}\right]-\lim_{\delta\rightarrow 0^{+}} \bE \left\langle\frac{\p_\lambda u(\delta;t)-\p_\lambda u(0;t)}{\delta},\nu\right\rangle\\
  			\overset{\eqref{eq:Du-v}}{=}&\bE [ \langle v(t),\nu\rangle^2] -\lim_{\delta\rightarrow 0^{+}}\bE \left\langle\frac{\p_\lambda u(\delta;t)-\p_\lambda u(0;t)}{\delta},\nu\right\rangle.
  		\end{aligned}
  	\end{equation}
  	By Lemma \ref{lem:Evv}, we have
  	\begin{eqnarray*}
  		\bE [\langle v(t),\nu\rangle^2] &=&\bE \left(\int_{\mR^d} v(t,x)\nu(\d x) \int_{\mR^d} v(t,y)\nu(\d y)\right)\\
  		&=&\int_{\mR^{2d}} \bE (v(t,x)v(t,y))\nu(\d x)\nu(\d y)\\
  		&=& \langle Q_t^{\cC} (f\otimes f), \nu\otimes\nu\rangle.
  	\end{eqnarray*}
  	For the second term on the right-hand side of \eqref{eq:D2u},  set
  	$$
  	  \eta_\delta(t,x):=\frac{\p_\lambda u(\delta; t,x)-\p_\lambda u(0;t,x)}{\delta}, \quad \forall \delta>0.
  	$$
  	Then $\eta_\delta$ satisfies
  	\begin{equation*}
  		\begin{cases}
  			\partial_t \eta_\delta=\frac{1}{2}\Delta \eta_\delta+\eta_\delta \p_t W-k_\delta,\\
  			\eta_\delta(0)=0,
  		\end{cases}
  	\end{equation*}
  	where $k_\delta(t,x)=u(\delta; t,x)\p_\lambda u(\delta; t,x)/\delta$. That is to say
  	\begin{equation*}
		\eta_\delta(t,x)= -\int_0^tP_{t-s}(k(s,\cdot))(x) \d s +\int_0^t\int_{\mathbb{R}^d} p(t-s,x-y)\eta_\delta(s,y)W(\d s,y) \d y.
    \end{equation*}
  	Let $\eta(t,x)$ be the unique continuous solution of the following equation:
  	\begin{equation*}
  		\begin{cases}
  			\partial_t \eta=\frac{1}{2}\Delta \eta+\eta \p_t W-v^2,\\
  			\eta(0)=0.
  		\end{cases}
  	\end{equation*}
  	The mild form of the above equation is
  	 \begin{equation}\label{eq:int-eta}
  		 \eta(t,x)= -\int_0^tP_{t-s}(v^2(s,\cdot))(x) \d s +\int_0^t\int_{\mathbb{R}^d} p(t-s,x-y)\eta(s,y)W(\d s,y)\d y.
    \end{equation}
    We get $\p_\lambda u(\delta; t,x)\le v(t,x)$ by \eqref{eq:u'-less-v},  $u(\delta; t,x)/\delta\le v(t,x)$ by Lemma \ref{lem:comparion}. Therefore, we have $0\leq k_\delta \le v^2$.
    Then following the argument of the  proof of Lemma \ref{lem:limit-u'}, we get that,  for each $t\geq 0$,
  	$$
  	  \lim_{\delta\rightarrow 0}\int_{\mR^d}\nu(\d x)\bE \l|\eta(t,x)-\eta_\delta(t,x)\r|=0.
  	$$
  	Taking expectation on both sides of \eqref{eq:int-eta}, we get
  	$$
  	  \bE \eta(t,x)=-\int_0^t \d s \int p(t-s,x-y)\bE [v^2(s,y)] \d y=-\int_0^t P_{t-s}(\pi Q_s^{\cC} (f\otimes f))(x)\d s.
  	$$
  	Therefore,
  	\begin{align*}
  		-\lim_{\delta\rightarrow 0^{+}}\bE \left\langle\frac{\p_\lambda u(\delta;t)-\p_\lambda u(0;t)}{\delta},\nu\right\rangle=
       &-\lim_{\delta\rightarrow 0^{+}} \bE \<\eta_\delta(t), \nu\>= \bE \<\eta(t), \nu\>\\
  		=& \left\langle\int_0^t P_{t-s}(\pi Q_s^{\cC} (f\otimes f))\d s,\nu\right\rangle.
  	\end{align*}
  	This together with \eqref{eq:D2u} implies the desired result.
  \end{proof}

  \section{Persistence in ``weak" environments}\label{Sec:persistence}

  In this section, we  establish the existence and uniqueness of solutions to equation \eqref{eq:CMP} when $\mu\in{\cM}_\rho(\mR^d)$, and prove
   Theorem \ref{thm:1}.

  \subsection{SBMRE starting from infinite measure}
  Suppose $\mu$ and $\mu_n$ are all in $\mathcal{M}(\mathbb{R}^d)$. We say that  $\mu_n$ converges vaguely to $\mu$, if for any $f\in C_c(\mR^d)$ , $\mu_n(f)$ converges to $\mu(f)$ as $n\to\infty$.
  In the following, we first construct the process in the $\sigma$-finite measure space $\cM_\rho(\mR^d)$.
  Let
  $$
    \Phi_\rho:=\l\{f\in C(\mR^d): |f|\leq N \phi_{\rho}, \mbox{ for some } N>0  \r\}.
  $$
  Suppose $\mu$ and $\mu_n$ are all in $\mathcal{M}_\rho(\mathbb{R}^d)$, we say $\mu_n$ converges to $\mu$ in $\cM_\rho(\mR^d)$, if $\mu_n(f)$ converges to $\mu(f)$ as $n\to\infty$ for all $f\in \Phi_\rho$.

   We consider SBMRE starting from an infinite measure $\mu\in \cM_\rho(\mR^d)$ with $\rho>0$ as the solution of \eqref{eq:CMP}.

    \begin{proposition}\label{prop:CMP}
  	Suppose  that  $\mu\in \cM_\rho(\mR^d)$
  with $\rho>0$.  There  is a unique solution
	      $(\Omega,\mathcal{F},\mathcal{F}_t,\bP,X, W)$ solves the \eqref{eq:CMP}   with initial measure $\mu$, and $[0,\infty)\ni t\mapsto X_t\in \cM_{\rho}(\mR^d)$ is continuous a.s.. Moreover, the moment formulas in Proposition \ref{prop:moments} also hold for $f\in \Phi_\rho$ and \(\mu\in \cM_\rho(\mR^d)\).
  \end{proposition}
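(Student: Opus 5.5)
Existence will come from approximation by finite initial measures, and uniqueness from the conditional log-Laplace duality \eqref{eq:clt}. Set $\mu_n:=\mu|_{B_n(0)}\in\cM_F(\mR^d)$ and let $X^{(n)}$ be the SBMRE started from $\mu_n$, all driven by the same noise $W$. Given $W$, the conditional Laplace functional
\[
\bP^W\exp(-\langle f,X^{(n)}_t\rangle)=\exp(-\langle u(t),\mu_n\rangle)
\]
shows that $X^{(n)}_t$ is, conditionally on $W$, infinitely divisible. Since $\mu_{n+1}=\mu_n+\mu|_{B_{n+1}\setminus B_n}$, we can realize $X^{(n)}$ as a sum of conditionally independent SBMREs started from the annuli pieces. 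Concretely, I would build on one probability space, via regular conditional laws given $W$, independent (given $W$) solutions $Y^{(k)}$ of \eqref{eq:CMP} with initial measure $\mu|_{B_{k}\setminus B_{k-1}}$, and set $X_t:=\sum_k Y^{(k)}_t$.

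The first step is to show that this sum converges in $\cM_\rho$, uniformly on compact time intervals. Using the first moment formula \eqref{eq:first} together with \eqref{eq:Ptphi}, we get $\bE\langle \phi_\rho,Y^{(k)}_t\rangle\le C_T\langle\phi_\rho,\mu|_{B_k\setminus B_{k-1}}\rangle$, which is summable. For uniformity in $t$, I would apply BDG to the semimartingale decomposition of $\langle \phi_\rho, Y^{(k)}_t\rangle$ (extending \eqref{eq:CMP} from $C^2_c$ to $\phi_\rho$ by cutoff, since $|\Delta\phi_\rho|\le C\phi_\rho$). The drift is then controlled by $\int_0^T\langle\phi_\rho,Y^{(k)}_s\rangle ds$. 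The martingale parts have quadratic variation $\int\langle\phi_\rho^2,Y_s\rangle ds$ (conditional) and $\int\langle \cC\phi_\rho\otimes\phi_\rho,Y_s\otimes Y_s\rangle ds$ (the $W$-integral), and both are bounded by the second moment formula \eqref{eq:second} with $\|\cC\|_\infty<\infty$. This yields $\bE\sup_{t\le T}\langle\phi_\rho,Y^{(k)}_t\rangle\le C_T(m_k+m_k^{1/2}+m_k)$ with $m_k=\langle\phi_\rho,\mu|_{B_k\setminus B_{k-1}}\rangle$.

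This is where I expect the main obstacle. The $m_k^{1/2}$ terms need not be summable, so I would instead bound the martingale of the partial sums $X^{(n)}$ directly. Its conditional-plus-noise quadratic variation is controlled by the second moment of $X^{(n)}-X^{(m)}$, whose cross term is $\langle Q^\cC(\phi_\rho\otimes\phi_\rho),\nu\otimes\nu\rangle$ with $\nu=\mu_n-\mu_m$. This term is at most $C_T\langle\phi_\rho,\nu\rangle^2$ by boundedness of $\cC$, so the sequence is Cauchy in $L^1(\Omega;C([0,T];\cM_\rho))$, measured with $\phi_\rho$ and a countable convergence-determining family in $\Phi_\rho$. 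Continuity of $t\mapsto X_t$ then follows from uniform convergence of the continuous $X^{(n)}$.

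Passing to the limit in \eqref{eq:CMP} is then routine, because all the terms are linear or quadratic in $X$ and are dominated by the above moments. Consequently $X$ solves \eqref{eq:CMP} with $X_0=\mu$. The moment formulas for $f\in\Phi_\rho$ follow from Proposition \ref{prop:moments} applied to $\mu_n$ by monotone convergence: first extend to $f\in\Phi_\rho^+$ via $C_b$ truncations, then pass $n\to\infty$. Finiteness of the right-hand sides uses \eqref{eq:Ptphi} and $Q^\cC_t\le e^{t\|\cC\|_\infty}Q_t$.

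For uniqueness, I would show that any solution $X$ of \eqref{eq:CMP} with $X_0=\mu\in\cM_\rho$ satisfies \eqref{eq:clt} for $f\in C_c^+$. Fix $t$ and let $u$ solve \eqref{eq:spde-clt} backward from time $t$ with terminal value $f$, driven by the time-reversed increments of $W$. We cannot simply use this reversed solution as a test function, because it is not adapted in the forward filtration; but under $\bP^W$ the noise $W$ is deterministic, so we may apply Itô's formula to $s\mapsto\exp(-\langle u(t-s),X_s\rangle)$ under $\bP^W$. The SPDE is designed so that the $W$-terms and the drift terms cancel, in the spirit of Mytnik–Xiong's argument.

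The new ingredient relative to finite initial measures is decay at infinity: I need $u(t-s)\le \lambda v$ with $v$ decaying like $\phi_{\rho/2}$ in the weighted sense of \eqref{eq:v-decay}, which guarantees $\langle u(t-s),X_s\rangle<\infty$ and justifies localization by cutoffs. This gives $\bP^W e^{-\langle f,X_t\rangle}=e^{-\langle u(t),\mu\rangle}$. The same computation for multi-time functionals, using the Markov property, identifies the conditional finite-dimensional distributions, and averaging over $W$ yields uniqueness in law.
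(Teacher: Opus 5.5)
\textbf{Existence.} This follows the paper's construction: decompose $\mu$ into shells, take pieces that are conditionally independent given $W$ as in \cite{MX2007local}, set $X=\sum_k Y^{(k)}$, use the first moment formula for $X_t\in\cM_\rho(\mR^d)$, Doob/BDG plus the moment formulas for uniform-in-time control, and finish with a limit argument in \eqref{eq:CMP}. The difference is how you get around the non-summable square roots.

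The paper uses dyadic shells $S_i=\{2^{i-1}\le |x|<2^i\}$. The conditional quadratic variation involves $f^2\le C\phi_{2\rho}$, and $\phi_{2\rho}\le 2^{-(i-1)\rho}\phi_\rho$ on $S_i$. Hence $\langle \phi_{2\rho},\mu_i\rangle^{1/2}\le C2^{-i\rho/2}\langle\phi_\rho,\mu_i\rangle^{1/2}\le C(2^{-i\rho}+\langle\phi_\rho,\mu_i\rangle)$. This gives $\sum_i\bE\sup_{t\le T}\langle f,X^i_t\rangle<\infty$ and a.s.\ uniform convergence at once.

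You instead apply the same estimates to the tail $\sum_{m<k\le n}Y^{(k)}$, which is itself a solution started from $\nu=\mu|_{B_n\setminus B_m}$. The square roots are then of quantities such as $\langle\phi_\rho,\nu\rangle$, which tend to $0$. Since the tails are nonnegative and decrease in $m$, this $L^1$ control of $\sup_{t\le T}\langle\phi_\rho,\cdot\rangle$ upgrades to a.s.\ uniform convergence. The domination $|\langle f,\cdot\rangle|\le N\langle\phi_\rho,\cdot\rangle$ for $f\in\Phi_\rho$ then handles all test functions at once. Your version works for any exhaustion and any $\rho>0$; with unit-width shells the paper's per-shell bound is summable in general only for $\rho>1$. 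The paper's dyadic choice buys absolute summability and avoids the monotonicity step.

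\textbf{Uniqueness.} Here the routes genuinely differ. The paper's proof does not argue uniqueness explicitly. It relies on the Mytnik--Xiong results (solutions of \eqref{eq:CMP} solve \eqref{eq:MP}, and the conditional log-Laplace characterization) together with Theorem \ref{T:A.4}, which is proved via Mytnik's Poisson dual $Y^{(n)}$ and the weighted third-moment bound of Lemma \ref{lem:dual_3moment}. You instead derive \eqref{eq:clt} directly for an arbitrary solution by backward duality. That would identify the conditional law given $W$, hence the joint law of $(X,W)$, without the Poisson dual. The sketch needs repair in three places.

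First, freezing $W$ does not make the computation classical. Under $\bP^W$ the term $\int_0^s\!\int f(x)\,W(\mathrm{d}r,x)X_r(\mathrm{d}x)$ is neither a martingale nor of finite variation, and $s\mapsto u(t-s)$ is a path of unbounded variation. So there is no semimartingale structure under $\bP^W$ on which to run It\^o's formula. The cancellation you expect does occur, but it must be derived under $\bP$: either with a forward--backward It\^o formula of Pardoux--Peng type, or by discretizing $[0,t]$. In the discrete version, write $\psi_r:=u(t-r)$ and split $\langle\psi_{r_{i+1}},X_{r_{i+1}}\rangle-\langle\psi_{r_i},X_{r_i}\rangle=\langle\psi_{r_{i+1}}-\psi_{r_i},X_{r_i}\rangle+\langle\psi_{r_{i+1}},X_{r_{i+1}}-X_{r_i}\rangle$. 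The backward It\^o noise term of $\psi$ contributes $-\int\psi_{r_{i+1}}(x)\,[W(r_{i+1},x)-W(r_i,x)]\,X_{r_i}(\mathrm{d}x)$, and the forward noise term of $X$ contributes the same quantity with a plus sign, up to higher order. This is the mechanism behind the conditional log-Laplace formulas in \cite{Xiong2004SLL, MX2007local}.

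Second, invoking the Markov property of an arbitrary solution for multi-time functionals is circular. Instead, solve backward from $t_2$ with datum $f_2$, add $f_1$ at time $t_1$, and continue to time $0$. The same duality then yields the joint conditional Laplace functional.

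Third, the localization needs an a priori bound on $\sup_{s\le t}\langle\phi_\rho,X_s\rangle$ for an arbitrary solution (an analogue of Lemma \ref{lem:moment-bound}). It also needs a weighted bound on $\psi$ that is uniform in time, whereas \eqref{eq:v-decay} is stated for fixed $t$. Moreover, \eqref{eq:v-decay} should be applied with $2\rho$ in place of $\rho$, since decay like $\phi_{\rho/2}$ does not suffice to pair with $X_s\in\cM_\rho(\mR^d)$.
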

  \begin{proof}
  	The proof is similar to that of  \cite[Theorem 2.5]{LMX2009flow}.
  	
  	Let $\mu_0=\mu|_{B_(0)}$. For each $i= 1,2, \cdots$, set $S_i=\{x\in\mR^d: 2^{i-1}\leq|x|< 2^i\}$, $\mu_i=\mu|_{S_i}$.
  	We may construct a common probability space $(\Omega,\mathcal{F},\bP)$, on which $X^i$  and  $W$ are defined such that $X^i$, $i\ge 0$, are independent given $W$, and for each $i\geq 0$, $X^i$ is the solution of \eqref{eq:CMP} with noise $W$ and initial measure $\mu_i$ (see \cite{MX2007local}). Set $X=\sum_{i=0}^\infty X^i$.
  	By the first order moment formula and \eqref{eq:Ptphi},
  	$$
  	  \bE\<\phi_{\rho},X_t\>=\sum_{i=0}^\infty\bE\<\phi_{\rho},X^i_t\> =\sum_{i=0}^\infty \<P_t\phi_{\rho}, \mu_i\>=\<P_t\phi_{\rho}, \mu\>\le C\<\phi_{\rho}, \mu\><\infty.
  	$$
  	Hence, for each $t\geq 0$, $X_t\in \cM_\rho(\mR^d)$ a.s.

  	Next we show that $X_\cdot$ is continuous in $\cM_\rho(\mR^d)$. Assume $f\in C^2(\mR^d)$ satisfying $f\geq0$ and $\nabla^k f \leq C_k \phi_{\rho}$. For any $i\in \mN$, using \eqref{eq:MP} with $X$ and $\mu$ being replaced by $X^i$ and $\mu_i$, and Doob's inequality, we have
  	$$
  	  \bE\l[\sup_{0\leq t\leq T}\<f,X^i_t\>^2\r]\leq C \l(|\<\phi_{\rho},\mu_i\>|^2+ \bE \int_0^T \<\phi_{2\rho}, X_t^i\> \d t+ \bE \int_0^T \<\phi_{\rho},X_t^i\>^2 \d t\r)
  	$$
  	By the  moment formulas in Proposition \ref{prop:moments} and \eqref{eq:Ptphi}, we have
  	\begin{equation*}
  	    \begin{aligned}
  	      \sum_{i=0}^\infty\l(\bE \int_0^T  \<\phi_{2\rho}, X_t^i\> \d t\r)^{1/2}\overset{\eqref{eq:first}}{=}& \sum_{i=0}^\infty\l(\int_0^T  \<P_t \phi_{2\rho}, \mu_i\> \d t\r)^{1/2} \overset{\eqref{eq:Ptphi}}{\leq} C\sum_{i=0}^\infty  \<\phi_{2\rho}, \mu_i\>^{1/2} \\
  	      \leq& C \sum_{i=0}^\infty 2^{-i\rho/2}  \<\phi_\rho,\mu_i\>^{1/2}\leq  C  \sum_{i=0}^\infty 2^{-i\rho}+ C \sum_{i=0}^\infty \<\phi_\rho,\mu_i\>\\
  	      \leq& C(1+ \<\phi_\rho, \mu\>)<\infty,
  	\end{aligned}
  	\end{equation*}
  	and
  	\begin{equation*}
  	    \begin{aligned}
  	    \sum_{i=0}^\infty\l\{  \bE \int_0^T \<\phi_{\rho},X_t^i\>^2 \d t \r\}^{1/2} \overset{\eqref{eq:second}}{\leq} & C \sum_{i=0}^\infty\l\{  \int_0^T  \l( \<P_t\phi_{\rho},\mu_i\>^2+ \l\< \int_0^t P_{t-s}[(P_s \phi_\rho)^2] \d s, \mu_i \r\> \r) \d t \r\}^{1/2} \\
  	    \overset{\eqref{eq:Ptphi}}{\leq} & C  \sum_{i=0}^\infty \l\{ \<\phi_\rho, \mu_i\>^2+ \<\phi_{2\rho}, \mu_i\> \r\}^{1/2} \\
  	    \leq & C \sum_{i=0}^\infty \<\phi_\rho, \mu_i\>+ C\sum_{i=0}^\infty  \<\phi_{2\rho}, \mu_i\>^{1/2}<\infty.
  	    \end{aligned}
  	\end{equation*}
    Therefore,
  	$$
  	  \sum_{i=0}^\infty\bE\l[\sup_{0\leq t\leq T}\<f,X^i_t\>\r] \leq \sum_{i=0}^\infty \l[ \bE\sup_{0\leq t\leq T}\<f,X^i_t\>^2\r] ^{1/2}<\infty.
  	$$
  	So with probability $1$, the following uniform convergence holds:
  	\begin{align*}
  	    \lim_{m\to\infty}\sup_{0\le t\le T}\left|\sum^m_{i=0}\langle f, X^i_t\rangle-\langle f, X_t\rangle\right|=& \lim_{m\to\infty}\sup_{0\leq t\le T}\left|\sum^\infty_{i=m+1}\langle f, X^i_t\rangle\right|\\
  	    \leq& \lim_{m\to\infty} \sum_{i=m+1}^\infty\sup_{0\leq t\leq T}\langle f,X^i_t\rangle=0.
  	\end{align*}
  	Hence, the continuity of $t\mapsto\langle f,X^i_t\rangle $ gives the continuity of $t\mapsto\langle f, X_t\rangle$, which implies the continuity of  $t\mapsto X_{t}$  in $\cM_\rho(\mR^d)$.
  	
  	By a limit argument, $X$ is
  	a solution to \eqref{eq:CMP} with $W$ given above and $X_0=\mu$. Moreover, the moment formulas \eqref{eq:first} and \eqref{eq:second} also hold for $f\in \Phi_\rho$ and $X$, and equation \eqref{eq:clt} also holds when $\mu\in \cM_{\rho}(\mR^d)$. Hence, following \cite[Lemma 2.3]{MX2007local}, $X$ is also a solution to \eqref{eq:MP} with initial measure $\mu$.
  \end{proof}

  \subsection{Proof for Theorem \ref{thm:1}}
  In this subsection, we prove Theorem \ref{thm:1}. Before going to the proof, we need some preparations on the topology of the  state space $\cM_\rho(\mR^d)$ of SBMRE starting from $m$.

  \begin{lemma}\label{lem:compact}
  	Let $K$ be a subset of $\mathcal{M}(\mathbb{R}^d)$.
  	\begin{enumerate}[(i)]
  		\item Suppose that $\sup_{\mu\in K}\mu(f)<\infty$ holds for all $f\in C^+_c$, then $K$ is relatively compact in
  		$\mathcal{M}(\mathbb{R}^d)$;
  		
  		\item Additionally, if
  		$$
  		\lim_{R\to\infty}\sup_{\mu\in K}\<\phi_{\rho}\1_{B^c_{R}(0)},\mu\>=0,
  		$$
  		then $K$ is relatively compact in $\cM_\rho(\mR^d)$.
  	\end{enumerate}
  \end{lemma}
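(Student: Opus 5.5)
Part (i) is the classical compactness criterion for the vague topology on $\mathcal{M}(\mathbb{R}^d)$. I would argue it directly. Fix a countable family $\{f_k\}\subset C_c^+(\mathbb{R}^d)$ that is dense in $C_c^+(\mathbb{R}^d)$ in the following sense: for every $f\in C_c^+$ there is a compact set $L$ and a subsequence $f_{k_j}$, supported in a fixed neighbourhood of $L$, with $\|f_{k_j}-f\|_\infty\to0$. Also fix cutoffs $\chi_n\in C_c^+$ with $\chi_n=1$ on $B_n(0)$. The map $\mu\mapsto(\mu(f_k))_k$ sends $K$ into a product of bounded intervals $\prod_k[0,\sup_{\mu\in K}\mu(f_k)]$, which is compact. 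So by a diagonal argument, any sequence $\mu_n\in K$ has a subsequence along which $\mu_n(f_k)$ converges for every $k$. For a general $f\in C_c^+$ with support in $B_R$ we have
\[
|\mu_n(f)-\mu_n(f_k)|\le \|f-f_k\|_\infty\,\mu_n(\chi_{R+1}),
\]
and the right side is bounded uniformly in $n$. Hence $\mu_n(f)$ is Cauchy. The limit $\Lambda(f)$ is then a positive linear functional on $C_c$, and by the Riesz representation theorem it equals $\mu(f)$ for some Radon measure $\mu$. This gives vague relative compactness. Alternatively, one can simply cite the standard result (e.g.\ Kallenberg, Random Measures, Lemma 4.5).

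For part (ii), take a sequence $\mu_n\in K$. By (i), after passing to a subsequence, $\mu_n\to\mu$ vaguely. I would next show that $\mu\in\mathcal{M}_\rho(\mathbb{R}^d)$ and that $\mu_n(f)\to\mu(f)$ for every $f\in\Phi_\rho$.

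First, set $M:=\sup_{\mu\in K}\langle\phi_\rho,\mu\rangle$ and check that it is finite. Choose $R_0$ so that $\sup_K\langle\phi_\rho\mathbf 1_{B_{R_0}^c},\mu\rangle\le1$. On $B_{R_0}$, bound $\phi_\rho\mathbf 1_{B_{R_0}}\le\chi_{R_0}$ and apply the hypothesis of (i). Then, for a cutoff $\psi_R\in C_c$ with $0\le\psi_R\le1$ and $\psi_R=1$ on $B_R$, vague convergence and monotone convergence give
\[
\langle\phi_\rho,\mu\rangle=\lim_R\langle\phi_\rho\psi_R,\mu\rangle=\lim_R\lim_n\langle\phi_\rho\psi_R,\mu_n\rangle\le M,
\]
so $\mu\in\mathcal{M}_\rho$.

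Second, let $f\in\Phi_\rho$ with $|f|\le N\phi_\rho$, and split $f=f\psi_R+f(1-\psi_R)$. The piece $f\psi_R$ lies in $C_c$, so $\mu_n(f\psi_R)\to\mu(f\psi_R)$. For the remainder, take $\psi_R$ supported in $B_{R+1}$. Then
\[
|\mu_n(f(1-\psi_R))|\le N\langle\phi_\rho\mathbf 1_{B_R^c},\mu_n\rangle,
\]
which is uniformly small by the tail hypothesis. Likewise $|\mu(f(1-\psi_R))|\le N\langle\phi_\rho\mathbf 1_{B_R^c},\mu\rangle\to0$ by dominated convergence, since $\mu\in\mathcal M_\rho$. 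An $\varepsilon/3$ argument, letting $n\to\infty$ first and then $R\to\infty$, yields $\mu_n(f)\to\mu(f)$. This is exactly convergence in $\mathcal{M}_\rho(\mathbb{R}^d)$.

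The only step needing care is this uniform tail control together with the verification that $M<\infty$. A related subtlety is that convergence in $\mathcal{M}_\rho$ is defined through the test class $\Phi_\rho$. That topology may not be first countable, so sequential relative compactness is not automatically the same as relative compactness. I would handle this by noting that the topology on $\mathcal{M}_\rho$ is metrizable. One way is via the weighted measures $\phi_\rho\mu$, which are finite, with weak convergence of $\phi_\rho\mu_n$ on the one-point compactification; this identifies convergence in $\mathcal{M}_\rho$ with weak convergence of finite measures plus no mass escaping to infinity. Alternatively, one can use a countable dense subset of $\Phi_\rho$ in the weighted sup norm $\|f/\phi_\rho\|_\infty$. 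Once metrizability is in hand, sequential compactness suffices.
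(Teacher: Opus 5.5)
Your proof is correct and follows the paper's route: in (i) a compactness-plus-diagonal argument (you go through a countable dense family and the Riesz representation theorem, while the paper restricts to the balls $\bar B_n(0)$), and in (ii) extracting a vaguely convergent subsequence, checking $\mu\in\cM_\rho(\mR^d)$, and splitting $f=f\varphi_R+f(1-\varphi_R)$ with the uniform tail bound. Your metrizability remark fills in what the paper leaves implicit: identifying $\cM_\rho(\mR^d)$ with the finite measures $\phi_\rho\mu$ under weak convergence does the job. The alternative you mention, a countable subset of $\Phi_\rho$ dense in the weighted sup norm, does not exist, because that space is isometric to the non-separable $C_b(\mR^d)$.
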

  \begin{proof}
  	(i).
  	Let $K_n=\{\mu|_{\bar{B}_n(0)}: \mu\in K\}$. Obviously, $K_n$ is tight in $\cM(\bar{B}_n(0))$. Therefore, $K_n$ is relatively compact in $\cM(\bar{B}_n(0))$. Then a standard diagonal argument shows that $K$ is relatively compact in $\cM(\mR^d)$ under vague topology.
  	
    \smallskip

  	(ii).
  	Let $\varphi_R\in C_c^+$  satisfying $0 \leq \varphi_R\leq 1$ and  $\varphi_R|_{B_R(0)}=1$.
  	Suppose $\{\mu_k\}_{k\ge 1}$ is a sequence in $K$ and $\{\mu_{k_j}\}_{j\geq 1}\subseteq \{\mu_k\}_{k\ge 1}$ with $k_j\to \infty$ and $\mu_{k_j}\to \mu$ in $\cM(\mR^d)$. Then we have
  	\begin{align*}
  		\<\phi_{\rho}\1_{B_R^c(0)},\mu\>\leq& \lim_{n\to\infty}\<\phi_{\rho}\1_{B_n(0)\backslash \bar{B}_{R/2}(0)}, \mu\> \leq \lim_{n\to \infty} \liminf_{j\to\infty} \<\phi_{\rho}\1_{B_n(0)\backslash \bar{B}_{R/2}(0)}, \mu_{k_j}\>\\
  		\leq& \sup_{\mu\in K} \<\phi_\rho\1_{B^c_{R/2}(0)}, \mu\> \to 0 ~ (R\to \infty),
  	\end{align*}
  	which implies $\mu\in \cM_\rho(\mR^d)$.
  	For any $f\in \Phi_\rho$, by triangle inequality
  	$$
  	  |\<f, \mu_{k_j}\>-\<f,\mu\>|\leq |\<f\varphi_R,\mu-\mu_{k_j}\>|+C \<\phi_{\rho}\1_{B^c_{R}(0)},\mu\>+ C  \sup_{\mu\in K} \<\phi_{\rho}\1_{B^c_{R}(0)},\mu \>.
  	$$
  	Letting $j\to\infty$ and then $R\to \infty$, we obtain that the left-hand side of the above inequality converges to zero, i.e.   $\mu_{k_{j}}\rightarrow \mu$ in $\cM_\rho(\mR^d)$ as $j\to\infty$. Thus, $K$ is relatively compact in $\cM_\rho(\mR^d)$.
  \end{proof}

  Let $\pi^m_t$ be the distribution of $X_t$ starting from the Lebesgue measure $m$.
  \begin{lemma}
  	Let $\rho>d$. For any $t\ge 0$, $\pi^m_t$ is supported on $\cM_\rho(\mR^d)$, and the collection of probability distributions $\{\pi^m_t, t\ge 0\}$ is tight in $\cP(\cM_{\rho}(\mathbb{R}^d))$.
  \end{lemma}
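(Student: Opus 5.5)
The plan is to verify the two sufficient conditions of Lemma \ref{lem:compact}(ii) in probability, using only the first moment formula.

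\textbf{Support.} Since $\rho>d$, we have $\langle \phi_\rho, m\rangle<\infty$, so $m\in\cM_\rho(\mR^d)$. Proposition \ref{prop:CMP} then shows that $X_t\in\cM_\rho(\mR^d)$ a.s. for every $t$, hence $\pi^m_t$ is supported on $\cM_\rho(\mR^d)$. The key fact for tightness is that Lebesgue measure is invariant for the heat semigroup: $\langle P_t f, m\rangle=\langle f, m\rangle$ for $f\geq 0$. By the first moment formula \eqref{eq:first}, which Proposition \ref{prop:CMP} extends to $f\in\Phi_\rho$ and $\mu=m$, we get
\[
\bE_m\langle f, X_t\rangle=\langle f, m\rangle \quad \text{for all } t\geq 0 \text{ and } 0\leq f\in\Phi_\rho .
\]
This bound is uniform in $t$, which is what makes tightness easy.

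\textbf{Compact sets.} For $\eps>0$, choose radii $R_k\uparrow\infty$ with $\langle \phi_\rho\1_{B^c_{R_k}(0)}, m\rangle\leq \eps 4^{-k}$, and a constant $M$ large. Define
\[
K_\eps:=\Bigl\{\mu:\ \langle\phi_\rho,\mu\rangle\leq M,\ \ \langle \phi_\rho\1_{B^c_{R_k}(0)},\mu\rangle\leq 2^{-k}\ \ \forall k\geq 1\Bigr\}.
\]
Every $f\in C_c^+$ satisfies $f\leq C_f\phi_\rho$, so $\sup_{K_\eps}\mu(f)\leq C_f M<\infty$. The tail condition of Lemma \ref{lem:compact}(ii) holds by construction. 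Hence $K_\eps$ is relatively compact in $\cM_\rho(\mR^d)$, and its closure $\overline{K_\eps}$ is compact.

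\textbf{Probability estimate.} By Markov's inequality and the invariance above,
\[
\bP_m(X_t\notin K_\eps)\leq \frac{\langle\phi_\rho,m\rangle}{M}+\sum_{k\geq 1}2^k\,\eps 4^{-k}\leq \frac{\langle\phi_\rho,m\rangle}{M}+\eps ,
\]
uniformly in $t\geq 0$. Taking $M$ large makes this at most $2\eps$, which gives $\pi^m_t(\overline{K_\eps})\geq 1-2\eps$ for all $t$, i.e.\ tightness.

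One technical point is measurability. The indicator $\1_{B^c_R(0)}$ is not continuous, so the events defining $K_\eps$ should be handled either through lower semicontinuity or, more simply, by replacing each indicator with a continuous cutoff $\phi_\rho(1-\varphi_{R_k})$. With that replacement $K_\eps$ is closed and the Markov bounds are unchanged. This bookkeeping is the only mild obstacle; the main step is the uniform first moment bound coming from the invariance of $m$.
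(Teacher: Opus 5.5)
Your proof is correct and follows essentially the same route as the paper: you get first moments uniform in $t$ from the invariance $\langle P_t f, m\rangle=\langle f,m\rangle$, then apply Markov's inequality and Lemma \ref{lem:compact}(ii). The only difference is cosmetic: you replace the paper's countable family of ball-mass constraints $\mu(B_n(0))\le C_n$ by the single constraint $\langle\phi_\rho,\mu\rangle\le M$ (enough since $f\le C_f\phi_\rho$ for $f\in C_c^+$), and you make explicit, via continuous cutoffs, the closedness and measurability bookkeeping that the paper leaves implicit.
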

  \begin{proof}
  	We only need to prove the tightness.
  	For any $n\in\mN$ and any $\eps>0 $, let $C_n=m(B_n(0))\cdot 2^{n}\eps^{-1}$ and let $A_n=\{\mu: \mu(B_n(0))>C_n\}$.
    By the first moment formula,
  	\begin{align*}
  		\pi^m_t(A_n)&=\bP_m(X_t(B_n(0))>C_n)\leq \bE_m(X_t(B_n(0)))C_n^{-1}\\
  		&=\langle P_t \1_{B_n(0)}, m \rangle C_n^{-1}=m (B_n(0))C_n^{-1}\leq \eps 2^{-n}.
  	\end{align*}
  	Let $K_1^\eps=\bigcap_{n=1}^\infty\{\mu:\mu(B_n(0))\leq C_n\}$.  By Lemma \ref{lem:compact}, $K_1^\eps$ is a relatively compact set in
  	$\mathcal{M}(\mathbb{R}^d)$, and
  	$$\pi^m_t(K_1^\eps)= 1- \pi^m_t\l( \bigcup_{n=1}^\infty A_n \r) \geq 1-\sum_{n=1}^\infty \eps 2^{-n}\geq 1-\eps.$$
  	Since $\rho>d$, there is a constant $R(\eps, k)>0$ such that
  	$$
  	  \int_{|x|>R(\eps, k)}\phi_{\rho}(x)\d x\le\frac{\eps}{k2^k}.
  	$$
  	Put
  	$$
  	  K^\eps_2=\bigcap_{k=1}^\infty\l\{\mu: \<\phi_{\rho}\1_{B^c_{R(\eps, k)}(0)},\mu\>\leq \frac{1}{k}\r\}.
  	$$
  	Then
  	\begin{align*}
  		\pi^m_t(K^\eps_2)&\geq 1-\sum_{k=1}^\infty \pi_t^m\l(\<\phi_{\rho}\1_{B^c_{R(\eps, k)}(0)},\mu\>>\frac{1}{k}\r)\\
  		&\ge 1-\sum_{k=1}^\infty k\bE_m(X_t(\phi_{\rho}\1_{B^c_{R(\eps, k)}(0)}))=1-\sum_{k=1}^\infty k\<P_t(\phi_{\rho}\1_{B^c_{R(\eps, k)}(0)}),m\>\\
  		&=1-\sum_{k=1}^\infty k\int_{|x|>R(\eps, k)}\phi_{\rho}(x)\d x \geq 1-\eps.
  	\end{align*}
  	Set $K^\eps=K^\eps_1\bigcap K^\eps_2$. Then $\pi_t^m(K^\eps)\geq 1-2\eps$, for all $t\geq 0$.  Thanks to Lemma \ref{lem:compact}, $K^\eps$ is a relatively compact set in $\mathcal{M}_\rho(\mathbb{R}^d)$. Thus, the tightness of $\{\pi^m_t, t\ge 0\}\subseteq \cP(\cM_{\rho}(\mR^d))$ follows.
  \end{proof}
  Since $\mathcal{M}_\rho(\mathbb{R}^d)$ is a Polish space, there exists a sequence $t_n\to \infty \, (n\to\infty)$, and a probability measure $\pi^m$ on
  $\mathcal{M}_\rho(\mathbb{R}^d)$ such that $\pi^m_{t_n} \rightarrow \pi^m$ in $\cP(\cM_{\rho}(\mR^d))$ as $n\to\infty$. The next lemma shows that $\pi^m$ is
  independent of the choice of $t_n$, and thus $\pi^m_t\rightarrow \pi^m$.
  \begin{lemma}
  	Let $\rho>d$. Then $\pi_t^m$ converges to some $\pi^m$ in $\cP(\cM_\rho(\mR^d))$. Moreover,
  	\begin{equation}\label{eq:Epi-less-m}
  		\int_{\mathcal{M}(\mathbb{R}^d)}\mu\pi^m(d\mu)\leq m.
  	\end{equation}
  \end{lemma}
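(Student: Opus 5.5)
Since the laws $\pi^m_t$ are tight in $\cP(\cM_\rho(\mR^d))$, it suffices to show that any two subsequential limits coincide. I will do this by proving that for every $f\in C_c^+(\mR^d)$ the Laplace functional $\bE_m \e^{-\<f,X_t\>}$ converges as $t\to\infty$. Laplace functionals $\mu\mapsto \e^{-\<f,\mu\>}$ with $f\in C_c^+$ are bounded and continuous on $\cM_\rho(\mR^d)$, and they determine laws of random measures. Hence convergence for all such $f$ identifies every subsequential limit as the same $\pi^m$.

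By \eqref{eq:clt} (valid for $\mu\in\cM_\rho$ by Proposition \ref{prop:CMP}),
\[
\bE_m \e^{-\<f,X_t\>}=\bE\, \e^{-\int u(t,x)\d x},
\]
where $u$ solves \eqref{eq:spde-clt} with $u(0)=f$. The key step is to show that $\int_{\mR^d} u(t,x)\d x$ is a.s. nonincreasing in $t$, or at least a supermartingale in a form that forces convergence. Integrating the SPDE in $x$, the Laplacian term vanishes and we get
\[
\d \int u(t,x)\d x = -\tfrac12\int u^2(t,x)\d x\,\d t+\int\!\!\int u(t,x)\,W(\d t,x)\d x .
\]
The stochastic term is a martingale. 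Its square integrability can be checked using $0\le u\le v$ (Lemma \ref{lem:comparion}) and Lemma \ref{lem:Evv}, which bounds the quadratic variation in expectation by $\int\!\!\int Q^{\cC}_t(f\otimes f)\cC$ on finite time intervals. So $Z_t:=\int u(t,x)\d x$ is a nonnegative continuous supermartingale. By the martingale convergence theorem, $Z_t\to Z_\infty$ a.s. Bounded convergence then gives $\bE \e^{-Z_t}\to \bE \e^{-Z_\infty}$, which depends only on $f$. This proves convergence of $\pi^m_t$ to a single limit $\pi^m$. The main obstacle is justifying that spatial integration commutes with the stochastic integral and that $Z_t$ is finite: $f\in C_c$ gives $u(t)\in L^1$ a.s. via $\bE\int v=\int f$, and I would use stochastic Fubini with the moment bounds of Lemma \ref{lem:Evp} and a cutoff in $x$.

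For \eqref{eq:Epi-less-m}, fix $f\in C_c^+$ and $\lambda>0$. The function $\mu\mapsto \<f,\mu\>\wedge N$ is bounded and continuous on $\cM_\rho$. Weak convergence together with the first moment formula \eqref{eq:first} gives
\[
\int (\<f,\mu\>\wedge N)\,\pi^m(\d\mu)=\lim_{t\to\infty}\bE_m(\<f,X_t\>\wedge N)\le \limsup_t \<P_tf,m\>=\<f,m\>.
\]
Letting $N\to\infty$ by monotone convergence yields $\int\<f,\mu\>\pi^m(\d\mu)\le\<f,m\>$ for all $f\in C_c^+$. This is \eqref{eq:Epi-less-m}, and it is just Fatou's lemma in this setting.
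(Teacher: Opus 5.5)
Your proposal is correct and is essentially the paper's proof. The paper also integrates the equation for $u$ against $m$, via stochastic Fubini with integrability from Lemma \ref{lem:Evv} and $u\le v$, to get $\langle u(t),m\rangle = m(f)-\xi_t$, where $\xi_t$ is a submartingale bounded above by $m(f)$; this is exactly your nonnegative supermartingale $Z_t$. It then uses martingale convergence and bounded convergence for the Laplace functionals, and Fatou's lemma with \eqref{eq:first} to obtain \eqref{eq:Epi-less-m}. Your truncation at level $N$ is just an explicit version of that Fatou step, and your remark that Laplace functionals over $C_c^+$ identify all subsequential limits spells out what the paper leaves implicit.
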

  \begin{proof}
  	Let $f\in C_c^+(\mR^d)$, and $u$ be the solution to \eqref{eq:spde-clt}. Then
  	\begin{equation}\label{eq:m(u)}
  		\begin{split}
  			m(u(t,\cdot))=&m(f)-\frac{1}{2}\int_{\mR^d}\d x\int_0^tP_{t-s}(u^2(s,\cdot))(x) \d s \\
  			&+\int_{\mR^d}\d x\int_0^t\int_{\mathbb{R}^d} p(t-s,x-y)u(s,y)W(
  			\d s,y)\d y=:m(f)-\xi_t.
  		\end{split}
  	\end{equation}
  	By Lemma \ref{lem:Evv}, for each $t\in [0,T]$,
  	\begin{eqnarray*}
  		&&\int_{\mathbb{R}^d} \bE \l\langle\int_0^\cdot\int_{\mathbb{R}^d} p(t-s,x-y)u(s,y)W(\d s,y)\d y \r\rangle_t \d x\\
  		&=&
  		\int_0^tds\int_{\mathbb{R}^{3d}} p(t-s,x-y)p(t-s,x-z)\bE [u(s,y)u(s,z)]\cC (y,z)\d y\d z\d x\\
  		&\leq &
  		C t \int_0^tds\int_{\mathbb{R}^{3d}} p(t-s,x-y)p(t-s,x-z) \l(P_sf (y) P_s f(z) \r)\d y\d z\d x\\
  		&= &
  		Ct \int_{\mR^{d}} \l( P_tf(x)  \r)^2  \d x<\infty.
  	\end{eqnarray*}
  	Then by the stochastic Fubini's theorem (see \cite[Theorem 4.33]{DZ2014stochastic}), we have
  	$$\xi_t=\frac{1}{2}\int_0^t ds\int_{\mathbb{R}^d}u^2(s,x)\d x-\int_0^t\!\!\int_{\mathbb{R}^d} u(s,y)W(\d s,y)\d y.$$
  	Therefore,  $\xi_t$ is a submartingale. By \eqref{eq:m(u)}, $\xi_t= m(f)-m(u(t,\cdot))\leq m(f)<\infty$. By Doob's martingale convergence theorem, $\lim_{t\rightarrow\infty}
  	\xi_t=\xi\in \mR$, $\bP $-a.s.. Therefore,
  	$$
  	\lim_{t\rightarrow\infty}\bE_m\left[\exp(-X_t(f))\right]=\lim_{t\rightarrow \infty}\bE \left[\exp(-m(u(t, \cdot))\right]=\bE \left[\exp(-m(f)+\xi)\right].
  	$$
  	So we get $\pi^m_t\rightarrow \pi^m$ in $\cP(\cM_\rho(\mR^d))$ and
  	$$
  	  \int_{\mathcal{M}(\mathbb{R}^d)}\e^{-\mu(f)}\pi^m(d\mu)=\lim_{t\rightarrow\infty}\bE_m\left[\exp(-X_t(f))\right]=\bE \left[\exp(-m(f)+\xi)\right].
  	$$
  	Using Fatou's lemma and \eqref{eq:first}, we obtain
  	$$
  	\int_{\mathcal{M}(\mathbb{R}^d)}\mu(f)\pi^m(d\mu)\leq \lim_{t\rightarrow \infty}\int_{\mathcal{M}(\mathbb{R}^d)}\mu(f)\pi^m_t(d\mu)=
  	\lim_{t\rightarrow \infty}\<P_t f,m\>=\<f,m\>,
  	$$
  	which implies \eqref{eq:Epi-less-m}.
  \end{proof}

  In order to prove Theorem \ref{thm:1}, we need to show $	\int_{\mathcal{M}(\mathbb{R}^d)}\mu\pi^m(d\mu)\geq m$. To obtain this inequality, we need to consider the time-space integration of $\bE [v^2(t, x)]$ on $\mR_+\times \mR^d$ as well as that of $\bE [v(t, x)\cC(x,y) v(t,y)]$ on $\mR_+\times \mR^d\times \mR^d$. Recall that $B_t$ and $B'_t$ are two independent $d$-dimensional Brownian motion. Let $\beta_t:=B_t-B'_t$ and $\beta'_t:=B_t+B'_t$.
  Note that $\beta_t$ and  $\beta'_t$ are two independent  $d$-dimensional Brownian motions.

  \begin{lemma}\label{lem:intEv2}
  	Let $f\in C_c^+(\mR^d)$ and $v$ be the unique solution to  \eqref{eq:PAM}. Under the same setting of Theorem \ref{thm:1},
  	\begin{equation}\label{eq:intEv2}
  		\int_0^\infty \d t \int_{\mathbb{R}^d} \bE \l[v^2(t,x)\r] \d x<\infty.
  	\end{equation}
  \end{lemma}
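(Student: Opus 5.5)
By Lemma \ref{lem:Evv} with $x=y$,
$$
\bE\l[v^2(t,x)\r]=E_{(x,x)}\Big[\exp\Big(\int_0^t\cC(B_s,B'_s)\,\d s\Big)f(B_t)f(B'_t)\Big].
$$
So the task reduces to bounding a Feynman--Kac Green function for the difference process $\beta_t=B_t-B'_t$.

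\emph{Step 1: reduce to $\beta$.} Bound the exponent using \eqref{cond:C}, $\cC(B_s,B'_s)\le g(\beta_s)$. Under $P_{(x,x)}$ we have $\beta_0=0$ and $\beta'_0=2x$. Write $f(B_t)f(B'_t)=F(\beta'_t,\beta_t)$ with $F(z,y):=f(\tfrac{z+y}{2})f(\tfrac{z-y}{2})$. Since $\beta'$ is independent of $\beta$ and the potential depends only on $\beta$, first integrate out $\beta'$ and then $x$. Using $\int_{\mR^d}p(2t,2x-z)\,\d x=2^{-d}$, I expect
$$
\int_{\mR^d}\bE\l[v^2(t,x)\r]\d x\le 2^{-d}\,E_0\Big[\e^{\int_0^t g(\beta_s)\d s}H(\beta_t)\Big],\qquad H(y):=\int_{\mR^d}f\big(\tfrac{z+y}{2}\big)f\big(\tfrac{z-y}{2}\big)\d z .
$$
Here $H(y)=2^d\int f(w)f(w-y)\,\d w$ is bounded and compactly supported. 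So \eqref{eq:intEv2} reduces to finiteness of $\int_0^\infty E_0[\e^{\int_0^t g(\beta_s)ds}H(\beta_t)]\,\d t$. Note that $\beta$ is a Brownian motion with generator $\Delta$, whose Green function is $G$.

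\emph{Step 2: Khasminskii.} Since $\beta$ has generator $\Delta$,
$$
\sup_x E_x\int_0^\infty g(\beta_s)\,\d s=\sup_x\int G(x,y)g(y)\,\d y=\kappa:=\frac{\Gamma(d/2-1)}{4\pi^{d/2}}\,\theta .
$$
Khasminskii's lemma then gives $\sup_x E_x\exp\big(p\int_0^\infty g(\beta_s)ds\big)\le (1-p\kappa)^{-1}$ whenever $p\kappa<1$.

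\emph{Step 3: Hölder in time.} Apply Hölder with exponents $p,q$, $1/p+1/q=1$:
$$
E_0\Big[\e^{\int_0^t g(\beta_s)ds}H(\beta_t)\Big]\le (1-p\kappa)^{-1/p}\big(E_0 H^q(\beta_t)\big)^{1/q}.
$$
Because $H$ is bounded with compact support, $E_0H^q(\beta_t)\le C(1\wedge t^{-d/2})$. Hence the integrand is $O(t^{-d/(2q)})$, which is integrable at infinity iff $q<d/2$, i.e. $p>d/(d-2)$. The condition needed is therefore $\kappa<(d-2)/d$, up to constants.

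\emph{Main obstacle.} The delicate point is bookkeeping the constants: the $2^d$ scaling from $\beta=B-B'$, the variance $2$ of $\beta$, and the normalization of $G$. These must line up with the threshold in \eqref{cond:g}; the factor $8(d-2)/(d2^d)$ there strongly suggests exactly this Hölder-plus-Khasminskii route with $p$ just above $d/(d-2)$. If the constants do not match, the fallback is to avoid Hölder altogether. One would use Chung--Zhao gauge theory instead: finiteness of the gauge (guaranteed once $\kappa<1$ via Khasminskii) implies comparability of the Feynman--Kac Green function with $G$. This gives $\int_0^\infty E_0[\e^{\int_0^t g}H(\beta_t)]dt\le C\int G(0,y)H(y)\,dy<\infty$ since $d\ge3$.
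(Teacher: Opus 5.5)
Your argument is correct, and it takes a genuinely different and somewhat sharper route than the paper's.

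\textbf{Where the two proofs differ.} The paper applies H\"older immediately under $P_{(x,x)}$, separating $\exp(\int_0^t g(\beta_s)\,\d s)$ from $f(B_t)f(B'_t)$. Independence of $B$ and $B'$ then leaves $\int_{\mR^d}\bigl(E_x f^{p'}(B_t)\bigr)^{2/p'}\d x$. By Young's inequality this decays like $t^{-\frac d2(1-\frac2p)}$, so the paper needs $p>2d/(d-2)$, i.e.\ $\kappa<(d-2)/(2d)$ in your notation.

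You instead integrate out $x$ and the center-of-mass motion $\beta'$ exactly, before any H\"older step. This is legitimate because $\beta'$ is independent of $\beta$ and does not see the potential. As a result, the H\"older loss is paid only in the relative coordinate. The decay becomes $t^{-\frac d2(1-\frac1p)}$, and $p>d/(d-2)$, i.e.\ $\kappa<(d-2)/d$, suffices.

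Both proofs rest on the same Khasminskii bound for $\beta$ with Green function $G$. Yours costs one extra change of variables but doubles the admissible range of $\kappa$.

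\textbf{The constants.} You did not need to hedge here. Condition \eqref{cond:g} says exactly $\kappa<\frac{d-2}{d\,2^{d-1}}$, which is far below $(d-2)/d$. So some $p\in\bigl(d/(d-2),1/\kappa\bigr)$ exists, your main route closes, and the fallback is unnecessary.

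Your reading of the threshold is also off: \eqref{cond:g} is not produced by $p$ near $d/(d-2)$. It equals $\frac{d-2}{2d}\cdot 2^{2-d}$, the product of the paper's requirement in this lemma and the factor $2^{d-2}$ from the $3G$ inequality in Lemma \ref{lem:intEvv}.

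\textbf{The fallback.} As you phrase it, a finite gauge from $\kappa<1$ yields $G^g\le CG$ on all of $\mR^d$. That would need more justification than you give. On the whole space, conditional-gauge comparability is usually proved for Green-tight potentials, and a merely bounded $g$ satisfying \eqref{cond:g} need not be Green-tight. The paper obtains exactly this comparison in Lemma \ref{lem:intEvv}, via the $3G$ inequality and Khasminskii for the $h$-transformed process, under the stronger requirement $2^{d-2}\kappa<1$.
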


  \begin{proof}
  	By our assumption \eqref{cond:g}, there is a constant
	\(p\in (2d/(d-2), 2^{d-1}d/(d-2))\) so that
	\begin{align*}
		\theta'=&p\sup_{x\in\mR^{d}} \int_{\mR^{d}} G(x,y)g(y)\d y = \frac{p\Gamma\left(d/2-1\right)}{4 \pi^{d/2}}\sup_{x\in \mR^d} \int_{\mathbb{R}^d}  |x-y|^{2-d} g(y)\d y<\frac{(d-2)p}{d 2^{d-1}}<1.
	\end{align*}

  	By Khasminskii's inequality (see, e.g.,  \cite[Lemma 2.1]{Y1997dirichlet}),
  	\begin{equation}\label{eq:Eexp-pg}
  		\begin{aligned}
  			&\sup_{x}E_{x}\left[\exp \left(\int_0^\infty p g(\beta_t)\d t\right)\right]
  			\leq  \left(1- p\sup_{x} E_x \int_0^\infty g(\beta_t)\d t\right)^{-1}\\
  			=& \left(1- p\sup_{x} \int_{\mR^{d}} G(x,y)g(y)\d y\right)^{-1}<\infty.
  		\end{aligned}
  	\end{equation}
  	Now suppose $\mbox{supp}f\subset B_{N}(0)$. Let $p'=\frac{p}{p-1}$.
  	By \eqref{eq:E-v2}, H\"{o}lder's inequality and \eqref{eq:Eexp-pg},
  	\begin{equation*}
  		\begin{aligned}
  			\int_{\mR^d}\bE \l[ v^2(t,x) \r] \d x \overset{\eqref{eq:E-v2}}{=} & \int_{\mR^d} E_{(x,x)}\left[\exp \l( \int_0^t \cC (B_s, B'_s)\d s\r) (f\otimes f)(B_t, B'_t)\right] \d x \\
  			\leq& \left(E_{0} \left[\exp \l( \int_0^t p g(\beta_s)\d s\r) \right]\right)^{\frac{1}{p}} \int_{\mR^d}\left[E_{x} f^{p'}(B_t)\right]^{\frac{2}{p'}} \d x\\
  			\overset{\eqref{eq:Eexp-pg}}{\leq}& C \|f\|_\infty^2 \int_{\mR^d} \left[E_{x} \1_{B_N(0)}(B_t)\right]^{\frac{2}{p'}} \d x \\
			\leq & C \|f\|_\infty^2 \left\|p_t*\1_{B_N(0)} \right\|_{{2}/{p'}}^{{2}/{p'}}\leq C \|f\|_\infty^2 N^{\frac{2d}{p'}} \|p_t\|_{{2}/{p'}}^{{2}/{p'}} \\
			\leq& C (d, p,  \theta', N, \|f\|_\infty)  t^{-d\l(\frac{1}{p'}-\frac{1}{2}\r)}.
  		\end{aligned}
  	\end{equation*}
  	Thus,
  	\begin{equation*}
  		\begin{aligned}
  			\int_1^\infty \d t \int_{\mR^d}\bE \l[v^2(t,x)\r] \d x \leq C \int_1^\infty t^{-d(1/p+1/2)} \d t<\infty~ \l(\mbox{since } p>2d/(d-2)\r).
  		\end{aligned}
  	\end{equation*}
  	On the other hand, by Lemma \ref{lem:Evv},
  	$$
  	  \int_0^1 \d t \int_{\mathbb{R}^d} \bE \l[v^2(t,x)\r] \d x \leq C  \int_0^1\!
  	  \!\int_{\mR^d} (P_t f^2) (x)  \d x \leq C \|f\|_2^2<\infty
  	$$
  	Therefore \eqref{eq:intEv2} holds true.
  \end{proof}

  \begin{lemma}\label{lem:intEvv}
  	Let $f\in C_c^+(\mR^d)$ and $v$ be the unique solution to  \eqref{eq:PAM}. Under the same setting of Theorem \ref{thm:1},
  	\begin{equation}\label{eq:intEvv}
  		\bE \int_0^\infty \d t \int_{\mathbb{R}^{2d}}v(t,x)\cC (x,y)v(t,y)\, \d x\d y<\infty.
    \end{equation}
  \end{lemma}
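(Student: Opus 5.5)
By Fubini's theorem, Lemma \ref{lem:Evv} and the bound $0\le\cC(x,y)\le g(x-y)$ from \eqref{cond:C}, the quantity in \eqref{eq:intEvv} is at most
\begin{align*}
\int_0^\infty \d t\int_{\mR^{2d}} g(x-y)\, E_{(x,y)}\Big[\exp\Big(\int_0^t g(B_s-B'_s)\d s\Big)(f\otimes f)(B_t,B'_t)\Big]\d x\d y .
\end{align*}
This is legitimate because $v\ge0$, and where $\cC$ is negative we may simply drop the term, since $v(t,x)\cC(x,y)v(t,y)\le v(t,x)g(x-y)v(t,y)$. Run $(B,B')$ backwards from the terminal time, or equivalently use the symmetry of the Brownian semigroup on $\R^{2d}$ together with the Feynman--Kac weight. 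Then the $\d x\,\d y$ integral becomes an integral against $f\otimes f$ of a functional of the two-point motion started at $(z,w)$:
\begin{align*}
\int_0^\infty\d t\int_{\mR^{2d}} f(z)f(w)\,E_{(z,w)}\Big[g(\beta_t)\exp\Big(\int_0^t g(\beta_s)\d s\Big)\Big]\d z\d w ,
\end{align*}
where $\beta=B-B'$ is a Brownian motion with doubled variance started at $z-w$. The key identity is $g(\beta_t)e^{\int_0^t g(\beta_s)\d s}=\frac{\d}{\d t}e^{\int_0^t g(\beta_s)\d s}$. Integrating in $t$ by monotone convergence, the expression equals
\begin{align*}
\int_{\mR^{2d}} f(z)f(w)\Big(E_{z-w}\Big[\exp\Big(\int_0^\infty g(\beta_s)\d s\Big)\Big]-1\Big)\d z\d w .
\end{align*}

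By Khasminskii's inequality, exactly as in \eqref{eq:Eexp-pg}, this time with $p=1$, we get
\begin{align*}
\sup_x E_x\Big[e^{\int_0^\infty g(\beta_s)\d s}\Big]\le \Big(1-\sup_x\int G(x,y)g(y)\d y\Big)^{-1}<\infty,
\end{align*}
since \eqref{cond:g} gives $\sup_x\int G(x,y)g(y)\,\d y<1$ with room to spare. Hence the whole quantity is at most $C\|f\|_1^2<\infty$, which proves \eqref{eq:intEvv}.

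The main obstacle is to justify the time-reversal/Fubini step cleanly, i.e. the identity
\begin{align*}
\int_{\mR^{2d}} F(x,y)\,Q^{g}_tH(x,y)\,\d x\d y=\int_{\mR^{2d}} H(z,w)\,Q^g_tF(z,w)\,\d z\d w ,
\end{align*}
which is the symmetry of the Feynman--Kac semigroup $Q^g_t$ with potential $g(x-y)$ in $L^2(\R^{2d})$. We apply it with $F=g(x-y)$, which is not integrable on $\R^{2d}$, so we first use it for $F_n=g(x-y)\1_{B_n}(x)$ and pass to the limit by monotone convergence, everything being nonnegative. In the resulting expression the factor $g(\beta_t)$ is evaluated at time $t$, while the exponential still accumulates over $[0,t]$, which is exactly the derivative structure exploited above. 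If this symmetric rewriting proves awkward, the fallback is a Markov-property argument at time $t$ combined with H\"older's inequality as in Lemma \ref{lem:intEv2}. That route is more delicate: it needs decay $\int g\,p_t\lesssim t^{-d/2}$-type bounds, and the approach via $g\in L^1$ is not available here.
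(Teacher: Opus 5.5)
Your argument is correct, and it takes a genuinely different route from the paper's.

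The paper works with $\beta=B-B'$ and $\beta'=B+B'$. It bounds $f\otimes f$ by $\|f\|_\infty^2\1_{B_N(0)}(\beta_t)\1_{B_N(0)}(\beta'_t)$ and integrates out $\beta'$. This leaves $\int g(z)\int_{B_N(0)}G^g(z,y)\,\d y\,\d z$, where $G^g$ is the Green function of the Feynman--Kac semigroup of $\beta$ with potential $g$. The paper then has to prove the conditional gauge bound $G^g\le CG$. It does so via the Doob $h$-transform with $h=G(\cdot,y)$, the $3G$-inequality, and Khasminskii's lemma for the conditioned process. That step costs the factor $2^{d-2}$, i.e. it needs $2^{d-2}\kappa<1$ with $\kappa:=\sup_x\int G(x,y)g(y)\,\d y$.

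You instead use the symmetry of the Feynman--Kac semigroup on $\R^{2d}$ to move $g$ to the terminal time. There $g(\beta_t)\e^{\int_0^t g(\beta_s)\d s}$ is an exact time derivative, so everything collapses to the plain gauge $\sup_x E_x[\e^{\int_0^\infty g(\beta_s)\d s}]$. That gauge is controlled by \eqref{eq:Eexp-pg} with $p=1$. What your route buys:
\begin{itemize}
\item the explicit bound $\frac{\kappa}{1-\kappa}\|f\|_1^2$;
\item it needs only $\kappa<1$ rather than $2^{d-2}\kappa<1$;
\item it avoids the step identifying $G(x,y)E^y_x[\e^{\int_0^\infty g(\beta_s)\d s}]$ with $G^g$.
\end{itemize}
The paper's route, in exchange, yields the pointwise comparison $G^g\le CG$, which is more than this lemma needs.

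Two small remarks on your justification. First, your truncation $F_n=g(x-y)\1_{B_n}(x)$ need not lie in $L^2(\R^{2d})$, since $g$ need not be square integrable under \eqref{cond:g}. However, no truncation is needed: the Feynman--Kac kernel itself is symmetric (by time reversal of the Brownian bridge), so the duality identity holds for all nonnegative measurable $F,H$ by Tonelli. Second, \eqref{cond:C} does not give $\cC\ge 0$, but your pointwise bound $v(t,x)\cC(x,y)v(t,y)\le v(t,x)g(x-y)v(t,y)$ already handles that.
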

 To prove Lemma \ref{lem:intEvv},
  we resort to Doob's $h$-transform. Recall that  for any $h>0$, the $h$-transformation of Brownian motion (denoted by $B^h$)
   is a diffusion process whose infinitesimal generator is given by
  $$
    L^h f := \frac{1}{2}\Delta f+\nabla \log h \cdot \nabla f.
  $$
  The Green function of $B^h$ is  $G^h(x,z) =2 h(z) G(x, z)/h(x)$, where $G$ is the Green function of $\Delta$.
  \begin{proof}[Proof of Lemma \ref{lem:intEvv}]
  	Suppose that $\mathrm{supp} f\subseteq B_{N/2}(0)$. By \eqref{eq:E-v2} and independence of $\beta$ and $\beta'$,
  	\begin{equation*}
  		\begin{aligned}
  			\bE  \left[ v(t,x)v(t,y) \right] &=  E_{(x,y)}\left[ \exp\left(\int_0^t \cC (B_s, B'_s)\d s\right) (f\otimes f)(B_t, B'_t)\right]  \\
  			&\leq E_{(x-y,x+y)}\left[\exp\left(\int_0^t g(\beta_s)\d s\right)f\otimes f\l(\frac{\beta_t+\beta'_t}{{2}}, \frac{\beta_t-\beta'_t}{{2}}\r)\right]\\
  			&\leq \|f\|_{\infty}^2  E_{(x-y,x+y)} \left[\exp\left(\int_0^t g(\beta_s)\d s\right)\1_{(B_N(0))\times B_N(0)}(\beta_t,\beta'_t )\right]\\
  			&=  \|f\|_{\infty}^2   E_{x-y}\left[\exp\left(\int_0^t g(\beta_s)\d s\right)\1_{B_{N}(0)}(\beta_t)\right]\cdot E_{x+y} \l(\1_{B_{N}(0)}(\beta'_t)\r).
  		\end{aligned}
  	\end{equation*}
  	Assume that the Feynman-Kac semigroup
  	$$
  	P^g_t \varphi(x):=  E_{x} \l[\exp\l(\int_0^t g(\beta_s)\d s\r) \varphi(\beta_t)\r]
  	$$
  	admits a Green function $G^g$. According the above estimate, we have
  	\begin{equation}\label{eq:gGg}
  		\begin{aligned}
  			&\bE \int_0^\infty \d t \int_{\mathbb{R}^{2d}}v(t,x)\cC (x,y)v(t,y)\, \d x\d y\\
  			\leq &C \int_0^\infty \d t\int_{\mathbb{R}^d} g(z) E_{z}\left[\exp\left(\int_0^t g(\beta_s)\d s\right)\1_{B_{N}(0)}(\beta_t)\right]
  			\int_{\mathbb{R}^d}E_w (\1_{B_{N}(0)}(\beta'_t))\d w\, \d z\\
  			\leq &C \int_{\mathbb{R}^d}g(z)\d z \int_0^\infty E_{z} \l[\exp\l(\int_0^t g(\beta_s)\d s\r) \1_{B_{N}(0)}(\beta_t)\r] \d t\\
  			\leq &C \int_{\mathbb{R}^d}g(x)\d x \int_{B_{N}(0)}G^{g}(x,y)\d y,
  		\end{aligned}
  	\end{equation}
  	provided that the last term of the above inequalities is finite.
  	
  	Let $P^y_x$ denote
  	the law of the $h$-Brownian motion with $h=G(\cdot,y)$. We claim that if $g$ satisfies \eqref{cond:g}, then $P^g_t$ do admit a Green function $G^g$ and
  	\begin{equation}\label{eq:Gg}
  		G^{g}(x,y)=  E_x^y \l[ \exp \l(\int_0^\infty g(\beta_s)\d s \r) \r] G(x,y)\leq C G(x,y).
    \end{equation}
    Assume \eqref{eq:Gg} holds. Plugging it into \eqref{eq:gGg}, we obtain
    \begin{equation*}
    	\bE \int_0^\infty \d t \int_{\mathbb{R}^{2d}}v(t,x)\cC (x,y)v(t,y)\, \d x\d y\leq C\int_{B_{N}(0)} \d y \int_{\mR^d} G(x,y) g(x) \d x \overset{\eqref{cond:g}}{<}\infty.
    \end{equation*}
    Therefore, it remains to prove \eqref{eq:Gg}. To do this, we first to show that
    \begin{equation}\label{eq:bridge}
    	\sup_{x,y}E_x^y \int_0^\infty g(\beta_s)\d s<1.
    \end{equation}
    In fact, by \eqref{cond:g}
    \begin{equation*}
    	\begin{aligned}
    		&\sup_{x,y}E_x^y \int_0^\infty g(\beta_s)\d s=\sup_{x,y}\int^\infty_0 E^y_x(g(\beta_s))\d s\\
    		=&\sup_{x,y}\int^\infty_0E_x(g(\beta_s)G(\beta_s, y))/G(x,y)\d s
    		=\sup_{x,y}\int_{\mathbb{R}^d}\frac{G(x,z)G(z,y)}{G(x,y)} g(z) \d z \\
    		=&  \frac{\Gamma\left(d/2-1\right)}{4 \pi^{d / 2}}  \sup_{x,y}\int_{\mathbb{R}^d}  \frac{|x-y|^{d-2}}{|x-z|^{d-2}|y-z|^{d-2}} g(z)\d z \\
    		\leq & \frac{\Gamma\left(d/2-1\right)}{4 \pi^{d / 2}}  2^{d-3} \sup_{x,y} \int_{\mR^d} (|x-z|^{2-d}+|y-z|^{2-d})g(z)\d z \\ &\qquad \qquad \qquad \qquad \qquad\quad (3G\mbox{-inequality, see \cite{CZ1995brownian}})\\
    		\leq& 2^{d-2} \sup_x\int_{\mathbb{R}^d}G(x,z)g(z)\d z\overset{\eqref{cond:g}}{<}1.
    	\end{aligned}
    \end{equation*}
  	With estimate \eqref{eq:bridge}, again using Khasminskii's inequality, we get
  	\begin{equation}\label{eq:Kh}
  		\sup_{x,y}E_x^y\l[\exp\l(\int_0^\infty g(\beta_s)\d s\r)\r]\leq \l(1-\sup_{x,y}E_x^y \int_0^\infty g(\beta_s)\d s\r)^{-1}
  		<\infty.
  	\end{equation}
  	Now we only need to prove that $E_x^y \l[ \exp \l(\int_0^\infty g(\beta_s)\d s \r) \r] G(x,y)=G^g (x,y)$.  By the Markov property of $\beta$ under $P^y_{\cdot}$,
  	\begin{equation*}
  		\begin{split}
  			E_x^y\l[\exp\l(\int_0^\infty g(\beta_s)\d s\r)\r]-1&=E_x^y\l[\int_0^\infty g(\beta_t)\exp\l(\int_t^\infty g(\beta_s)\d s\r)\d t\r]\\
  			&=E_x^y\l\{\int_0^\infty g(\beta_t)E_{\beta_t}^y\l[\exp\l(\int_0^\infty g(\beta_s)\d s\r)\r]\d t\r\}\\
  			&=\int_{\mR^d}\frac{G(x,z)G(z,y)}{G(x,y)}E_z^y\l[\exp\l(\int_0^\infty g(\beta_s)ds\r)\r] g(z) \d z.
  		\end{split}
  	\end{equation*}
  	Set $\widetilde{G^g}(,x,y)= G(x,y) 	E_x^y\l[\exp\l(\int_0^\infty g(\beta_s)\d s\r)\r]$. Then the above identity implies
  	\begin{align*}
  		\widetilde{G^g} = G + G(g \cdot \widetilde{G^g}).
  	\end{align*}
  	Taking Laplacian on both sides of the above equality, we get
  	(in the sense of distribution)
  	$$
  	  -\left(\Delta+g\right)\widetilde{G^{g}} =\mathrm{Id}.
  	$$
  	By Feynman-Kac formula, $\widetilde{G^g}$ is the Green function of the Feynman-Kac semigroup $P^g_t$, i.e.
  	\begin{align*}
  		G^g(x,y)=\widetilde{G^g}(x,y) =E_x^y\l[\exp\l(\int_0^\infty g(\beta_s)\d s\r)\r] G(x,y).
  	\end{align*}
  	So we complete our proof.
  \end{proof}

  Now we are ready to prove Theorem \ref{thm:1}.

  \begin{proof}[Proof of Theorem \ref{thm:1}]
  	For any  fixed $f\in C_c^+ (\mR^d)$,
  	let $v$ be the unique solution of \eqref{eq:PAM}, and let $u(\lambda)$ be the solution to \eqref{eq:spde-clt} with $u(\lambda;0)=\lambda f$. Then
  	\begin{equation}\label{eq:int-u-lambda}
  		\begin{aligned}
  			u(\lambda; t,x)=& \lambda P_tf(x)-\frac{1}{2}\int_0^tP_{t-s}(u^2(\lambda; s,\cdot))(x) \d s \\
  			&+\int_0^t\!\!\int_{\mathbb{R}^d} p(t-s,x-y)u(\lambda;s,y)W(\d s,y)\d y,
  		\end{aligned}
    \end{equation}
  	Integrating both sides of \eqref{eq:int-u-lambda} with respect to the Lebesgue measure $m$ on $\mR^d$ and using the stochastic Fubini's theorem (see \cite[Theorem 4.33]{DZ2014stochastic}), we get
  	\begin{equation}\label{eq:int-ul}
  	  \langle u(\lambda;t,\cdot), m\rangle=\lambda \langle f, m\rangle -\xi_t(\lambda),
  	\end{equation}
  	where
  	$$
  	  \xi_t(\lambda)=\frac{1}{2}\int_0^t\!\!\int_{\mR^d}u^2(\lambda; s,y)\d y\d s -\int_0^t\!\!\int_{\mathbb{R}^d} u(\lambda;s,y)W(\d s,y)\d y.
  	$$
  	By condition \eqref{cond:g}, Lemma \ref{lem:intEv2} and the fact that $u(\lambda)\leq \lambda v$, one sees that
  	  	$$
  	  \eta(\lambda):=\frac{1}{2}\int_0^ \infty\!\!\int_{\mathbb{R}^d}u^2(\lambda; t,x)\d x\d t<\infty.
  	$$
  	Similarly, Lemma \ref{lem:intEvv} and  It\^o isometry \eqref{eq:Ito} yield
  	$$
  	  m(\lambda):=-\int_0^\infty \!\! \int_{\mathbb{R}^d} u(\lambda; s,x)W(\d s,x)\d x
  	$$
  	is well defined. Put $\xi(\lambda):=\eta(\lambda)+m(\lambda)$.  Letting $t\to \infty$ in \eqref{eq:int-ul}, we obtain
  	\begin{equation}\label{eq:upper-zeta}
  		0\leq \lim_{t\to\infty}\langle u(\lambda;t,\cdot), m\rangle=\lambda \langle f,m\rangle -\xi(\lambda),
  	\end{equation}
  	which implies $\xi(\lambda)\leq \lambda \langle f, m\rangle $. By the dominated convergence theorem, we have
  	\begin{equation}\label{eq:Eexp-lambdaf}
  		\int_{\mathcal{M}(\mathbb{R}^d)}\e^{-\mu(\lambda f)}\pi^m(\d\mu)=\bE \left[\exp(-\lambda \langle f, m \rangle +\xi(\lambda))\right].
  \end{equation}
  	Taking derivation with respect to $\lambda$ on both sides of \eqref{eq:Eexp-lambdaf} at $\lambda=0$, again by the dominated convergence theorem, we have
  	\begin{equation}\label{eq:Epim}
  		\int_{\mathcal{M}(\mathbb{R}^d)}\mu(f)\pi^m(\d \mu)=\langle f, m \rangle -\lim_{\lambda\rightarrow 0}\lambda^{-1}(\bE \e^{\xi(\lambda)}-1).
    \end{equation}
    Thus, it remains to show
    $$
      \lim_{\lambda\rightarrow 0}\lambda^{-1}(\bE \e^{\xi(\lambda)}-1)=0.
    $$
  	Since $\p_\lambda u(\lambda; t, x)\leq v(t,x)$ (see \eqref{eq:u'-less-v}), by Lemmas \ref{lem:intEv2}, \ref{lem:intEvv} and the dominated convergence theorem,  we have
  	\begin{equation}\label{eq:eta'}
  		\p_\lambda \eta(\lambda)=\int_0^\infty\!\!\int_{\mathbb{R}^d} u(\lambda;t,x)\p_\lambda u(\lambda;t,x)\d x\d t\leq\lambda\int_0^\infty\!\!\int_{\mathbb{R}^d}v^2(t,x)\d x\d t
  	\end{equation}
    and
  	\begin{equation*}
  		\p_\lambda m(\lambda)=-\int_0^\infty\!\!\int_{\mathbb{R}^d} \p_\lambda u(\lambda;t,x)W(\d t,x)\d x,
    \end{equation*}
    and each of the right-hand side of the above two inequalities is well-defined real variables. Note that
  	\begin{equation}\label{eq:Dzeta}
  		\lambda^{-1}(\bE \e^{\xi(\lambda)}-1)=\lambda^{-1} \int_0^\lambda \l[\bE (\e^{\xi(\tau)}\p_\lambda \eta(\tau))+\bE (\e^{\xi(\tau)}\p_\lambda m(\tau))\r]\d \tau=:I_1(\lambda)+I_2(\lambda).
   \end{equation}
   Below we are going to prove $I_i(\lambda) \to 0$ as $\lambda\to 0$ $(i=1,2)$. For $I_1$, by \eqref{eq:upper-zeta} and \eqref{eq:eta'},
  	one can see that for any $\lambda\in (0, 1)$,
  	$$
  	  0\leq I_1(\lambda) \leq
  	  \lambda \e^{\lambda \langle f, m \rangle }\int_0^\infty\!\!\int_{\mathbb{R}^d}\bE v^2(t,x)\d t\d x.
  	$$
  	Hence,
  	\begin{equation}\label{eq:Deta}
  		\lim_{\lambda\rightarrow0^+}\lambda^{-1} \int_0^\lambda \bE \e^{\xi(\tau)}\p_\lambda \eta(\tau) \d \tau =0.
  	\end{equation}
   For $I_2(\lambda)$, set
  	$$
  	  V :=\lim_{t\rightarrow\infty}\langle v(t), m\rangle =\langle f, m \rangle +\int_0^\infty\!\! \int_{\mathbb{R}^d}v(s,z)W(\d s,z)\d z,
  	$$
  	which is finite a.s., due to Lemma \ref{lem:intEvv}. By \eqref{eq:int-ul}, $\xi_t(\lambda)\geq \lambda \langle f, m \rangle -\lambda \langle v(t),m\rangle$, so
  	\begin{equation}\label{eq:lower-zeta}
  		\xi(\lambda)\geq \lambda \langle f, m \rangle -\lambda V .
    \end{equation}
  	Since $\bE  \p_\lambda m(\tau)=0$, for any $M>0$, we have
  	\begin{equation}\label{eq:def-J}
  		\begin{aligned}
  			I_2(\lambda)=&\lambda^{-1} \int_0^\lambda \bE (\e^{\xi(\tau)}\p_\lambda m(\tau))\d\tau-\lambda^{-1} \int_0^\lambda \bE \p_\lambda m(\tau)\d\tau\\
  			\leq &\lambda^{-1}\int_0^\lambda \bE \left(|\e^{\xi(\tau)}-1||\p_\lambda m(\tau)|; ~V \leq M\right)\d\tau\\
  			&+\lambda^{-1}\int_0^\lambda \bE \left(|\e^{\xi(\tau)}-1||\p_\lambda m(\tau)|; ~  V > M\right)\d\tau\\
  			=&:I_{21}(\lambda)+I_{22}(\lambda).
  		\end{aligned}
  	\end{equation}
  	For any $\varepsilon>0 $, choose $M>3\langle f, m \rangle $ such that
  	$$
  	  \bP (V >M)\leq \varepsilon.
  	$$
  	Thanks to \eqref{eq:upper-zeta}, for  $\lambda$ sufficiently small, we have  $\e^{\xi(\tau)}-1\leq 2\xi^+(\tau)\leq 2\lambda \langle f, m \rangle $ for  all $\tau\in [0, \lambda]$. On the set $\{V \leq M\}$, using \eqref{eq:lower-zeta}, we have
  	$$
  	  1-\e^{\xi(\tau)}\leq 1-\e^{\tau(\langle f, m \rangle -V )}\le 1-\e^{\tau(\langle f, m \rangle -M)}\le\lambda (M-\langle f, m \rangle )\leq \lambda M,\quad\forall \tau\in[0, \lambda].
  	$$
  	 So on the set $\{V \leq M\}$, $|\e^{\xi(\tau)}-1|\leq \lambda M$ for  all $\tau\in [0, \lambda]$. This implies that
  	\begin{equation}\label{eq:upper-J1}
  		I_{21}(\lambda)\le \lambda M\sup_{\tau>0}\bE |\p_\lambda m(\tau)| \leq \lambda M \sup_{\tau>0} (\bE |\p_\lambda m(\tau)|^2)^{1/2}.
  	\end{equation}
  	Using H\"{o}lder's inequality and \eqref{eq:upper-zeta}, we also have
  	\begin{equation}\label{eq:upper-J2}
  		I_{22}(\lambda)\le  \sqrt{\varepsilon} (1+\e^{\lambda \langle f, m \rangle })\sup_{\tau>0} (\bE |\p_\lambda m(\tau)|^2)^{1/2}.
  	\end{equation}
  	Noting that
  	\begin{align*}
  		\bE  \p_\lambda m(\tau)^2 =&\bE \l[\int_0^\infty\!\! \int_{\mathbb{R}^d}\p_\lambda u(\tau,t,x)W(\d s,x)\d x\r]^2\\
  		\overset{\eqref{eq:Ito}}{=}&\bE \int_0^\infty \d t \int_{\mathbb{R}^{2d}}  \p_\lambda u(\tau,t,x) \p_\lambda u(\tau,t,y)\cC (x,y) \d x \d y\\
  		\leq& \bE \int_0^\infty \d t \int_{\mathbb{R}^{2d}} v(t,x)v(t,y)\cC (x,y) \d x \d y\overset{\eqref{eq:intEvv}}{\leq }C ~ (C \mbox{ does not depend on }\tau)
  	\end{align*}
  	and combining \eqref{eq:def-J}-\eqref{eq:upper-J2}, we get
  	$$
  	  I_2(\lambda) \le  C \l[ \lambda M +\sqrt{\varepsilon} (1+\e^{\lambda \langle f, m \rangle }) \r].
  	$$
  	 Letting $\lambda\rightarrow 0$ and then $\varepsilon\rightarrow 0$, we obtain $\lim_{\lambda\rightarrow0^+}I_{2}(\lambda)=0$. This together with  \eqref{eq:Epim}, \eqref{eq:Dzeta} and \eqref{eq:Deta} yields
  	$$
  	  \int_{\mathcal{M}(\mathbb{R}^d)}\mu(f)\pi^m(d\mu)=\langle f, m \rangle -\lim_{\lambda\rightarrow 0}\lambda^{-1}(\bE \e^{\xi(\lambda)}-1)=\langle f, m \rangle ,\quad\forall f\in C_c^+.
  	$$
  	So we complete the proof for the first conclusion.
  	
  	Next we show that $\pi^m$ is an invariant measure of the $\cM_\rho(\mR^d)$-valued Markov process $X$.
  	The proof for this is in fact standard.  For any $f\in C_c^+$, by the Markov property of $X$,
  	\begin{equation}\label{eq:Markov}
  		\bE_m \e^{-\langle f,X_{t+s}\rangle} = \bE_m \bE_{X_t} \e^{-\langle f,X_s\rangle}=	\int_{\mathcal{M}(\mathbb{R}^d)} \bE_{\mu} \e^{-\langle f,X_s\rangle} \pi^m_t(d\mu).
  	\end{equation}
  	We claim that the function $\mu\mapsto \bE_{\mu} \e^{-\langle f,X_s\rangle}$ is a continuous function from $\cM_\rho(\mR^d)$ to \(\mR\). Assuming this, then using Theorem \ref{thm:1} and letting $t\to\infty$ in \eqref{eq:Markov}, we obtain that
  	\begin{align*}
  		\int_{\mathcal{M}(\mathbb{R}^d)}\e^{-\<f,\mu\>}  \pi^m(d\mu)=&\lim_{t\to\infty} \bE_m \e^{-\langle f,X_{t+s}\rangle}=	\lim_{t\to\infty}\int_{\mathcal{M}(\mathbb{R}^d)} \bE_{\mu} \e^{-\langle f,X_s\rangle} \pi^m_t(d\mu) \\
  		=& \int_{\mathcal{M}(\mathbb{R}^d)} \bE_{\mu} \e^{-\langle f,X_s\rangle} \pi^m(d\mu),
  	\end{align*}
  	i.e $\bE_{\pi^m}\e^{\<f,X_0\>}=\bE_{\pi^m}\e^{\<-f,X_s\>}$, which means $\pi^m$ is an invariant distribution of $X$. Now we prove that the function $\mu\mapsto \bE_{\mu} \e^{-\langle f,X_s\rangle}$ is a continuous function in $\cM_\rho(\mR^d)$. Suppose that $\mu_n\rightarrow \mu$ in $\cM_\rho(\mR^d)$, then
  	$$\l|\bE_{\mu_n} \e^{-\langle f,X_s\rangle}-\bE_{\mu} \e^{-\langle f,X_s\rangle}\r|=\l|\bE \e^{-\<u(s),\mu_n\>}-\bE \e^{-\<u(s),\mu\>}\r|\leq \bE \l|1-\e^{-\<u(s),\mu_n-\mu\>}\r|.$$
  	Thanks to \eqref{eq:v-decay}, there is a positive random variable $K$ such that $0\leq u(s)\leq v(s) \leq K\phi_{\rho}$, for each $\rho>0$. Therefore $\<u(s),\mu_n-\mu\>\rightarrow 0$, $\bP$-a.s. By the dominated convergence theorem, one sees that $\bE \l|1-\e^{-\<u(s),\mu_n-\mu\>}\r|\rightarrow 0$, which yields the continuity of the map $\mu\mapsto \bE_{\mu} \e^{-\langle f,X_s\rangle}$.
  \end{proof}

  \section{Local extinction in ``strong" environments}\label{Sec:extincition}

  In this section, we give the proof for Theorem \ref{thm:2}.

  We use $\zeta$ to denote the Gaussian noise with spatially homogeneous correlation function $\Theta$:
    $$
    \bE [ \zeta (s,x)\zeta(t,y) ] = \Theta(x-y) (t\wedge s),
  $$
  where $\Theta\in C^\beta(\mR^d)$  with $\beta>1$  and  $\Theta(0)=1$.
  In the following, we always assume that $W=\sqrt{a} \zeta$, and under $\bP_m$,
  $X$ is the solution of \eqref{eq:MP}  with $X_0=\mu$. Then
  $$
  \bE_\mu \exp(-X_t(f))=\bE  \exp(-\langle u(t),\mu\rangle),
  $$
  and $u$ is the solution to \eqref{eq:spde-clt}.  Let $v$ and $\widetilde{v}$ be solutions to
  $$
    \p_t v = \frac{1}{2} \Delta v+ v \, \p_tW ~\mbox{ and }~ \p_t \widetilde{v} = \frac{1}{2} \Delta \widetilde{v}+ \widetilde{v} \circ \p_t W
  $$
  with $v(0)=\widetilde{v}(0)=1$, respectively.
  Here $\circ W(\d s,x)$ denotes the Stratonovich differential of $W(s,x)$. By our assumption \eqref{cond:Gamma},
  \begin{equation}\label{id-v}
  	\widetilde{v} (t,x)=v (t,x)\e^{\frac{a t}{2}}.
  \end{equation}
 Let \(E\) and \(P\) denote the expectation and probability with respect to the Brownian motion \(B\), which is independent of \(\zeta\). The following stochastic Feynman-Kac formula (see  \cite{CV1998almost}) will be used frequently: for fixed $t\geq 0$ and $x\in\mR^d$,
  \begin{equation}\label{eq:SFK}
  	\widetilde{v}(t,x)=E\left[f(x+B_t)\exp \l(\int_0^tW(\d s,x+B_{t-s})\r)\right],\quad \forall f\in C_b(\mR^d). \tag{SFK}
  \end{equation}

  Our proof for Theorem \ref{thm:2} is based on the following large deviation result whose proof is  presented in Appendix \ref{Sec:App-LDP}.
  \begin{lemma}\label{lem:LDP}
  	There are constants $N_0(d,\Theta)\geq 1$ such that for any $a\geq N_0$ and sufficient large $t$,
  	$$
  	\bP \left(\sup_{|x|\leq t} v(t,x) > \e^{-\frac{a t}{3}} \right)\leq  \e^{-c t},
  	$$
  	where $c>0$ only depends on $d,a$ and $\Theta$.
  \end{lemma}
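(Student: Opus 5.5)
I will prove the bound at a fixed point first and then upgrade it to the supremum over $|x|\le t$ using a discretization and a Harnack-type control of the oscillation of $v$ within unit cells. By \eqref{id-v}, $v(t,x)=\widetilde v(t,x)\e^{-at/2}$, so the event $\{v(t,x)>\e^{-at/3}\}$ is the same as $\{\log\widetilde v(t,x)>at/6\}$. Here $f\equiv1$, and by \eqref{eq:SFK}
$$
\widetilde v(t,x)=E\Big[\exp\Big(\sqrt a\int_0^t\zeta(\d s,x+B_{t-s})\Big)\Big].
$$
The plan is a Chebyshev bound on a fractional moment. For $q\in(0,1)$, Jensen in the $E$-variable is not directly available, so I will use the standard trick of replicas. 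For $q=1/n$ one has $\bE[\widetilde v^{q}]\le (\bE\widetilde v)^q=\e^{aqt/2}$, which is too weak. Instead I will follow Carmona--Viens \cite{CV1998almost}: the Lyapunov exponent $\lambda(a)=\lim t^{-1}\log\widetilde v(t,x)$ satisfies $\lambda(a)\le C(d,\Theta)\,a/\log a$ for large $a$. This holds because $\Theta\in C^\beta$ with $\beta>1$, so $\Theta(0)-\Theta(x)\lesssim|x|^{\beta\wedge2}$. The upper bound comes from discretizing the Brownian path on a time mesh $\delta\sim (\log a)/a$ and on a spatial mesh, and then counting paths (entropy $\sim t\log(1/\delta)/\delta$) against the Gaussian tail of each discrete path's action (variance $a t$). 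Concentration of the Gaussian functional $\log\widetilde v(t,x)$ is available: it is Lipschitz in the noise with constant $\sqrt{at}$ by Borell--TIS. Together these give $\bP(\log\widetilde v(t,x)>at/6)\le \e^{-c t}$ for every $x$ once $a\ge N_0$, because $C a/\log a<a/12$.

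For the supremum, cover $\{|x|\le t\}$ by $O(t^d)$ unit cubes with centers $x_i$. In each cube I need $\sup_{x\in Q_i}\log\widetilde v(t,x)\le \log\widetilde v(t,x_i)+at/12$ outside an event of probability $\e^{-ct}$. I will obtain this from a Harnack-type inequality via the Markov property at time $t-1$. Write $\widetilde v(t,x)=E[\widetilde v(t-1,\cdot)$-weighted functional of the last unit time$]$. The ratio of the Gaussian kernels for starting points $x$ and $x_i$ at distance $\le 1$ over a unit time interval is bounded by $\e^{C|y|}$-type factors. The noise contribution over $[t-1,t]$ is controlled by the supremum of a Gaussian field over a compact set, using the chaining argument of Lemma \ref{lem:control-v} (Garsia--Rodemich--Rumsey). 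The $C^\beta$-regularity of $\Theta$ gives the needed Hölder modulus, and its tails are Gaussian with variance $O(a)$, so the exceptional probability is $\e^{-c t^2/a}\ll\e^{-ct}$. A union bound over the $O(t^d)$ cubes then costs only a polynomial factor, which is absorbed into $\e^{-ct}$ for large $t$.

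The main obstacle is the single-point exponential bound with the sharp requirement that the rate beat $a/6$ uniformly. The subtlety is that one needs a large-deviation (probability $\e^{-ct}$) statement rather than just the almost-sure Lyapunov exponent. I would first try to combine the almost-sure limit of \cite[Theorem 2]{CV1998almost} with Gaussian concentration around $\bE\log\widetilde v(t,x)$. If it is not clear that the mean satisfies $\bE\log\widetilde v\le (a/12)t$, I would instead redo the Carmona--Viens discretization directly on the probability. That means bounding $\bP\big(\sum_{\text{paths}}\e^{\text{action}}>\e^{at/6}\big)$ by a union over discrete paths with Gaussian tails, choosing the mesh so that the entropy is at most $at/100$.
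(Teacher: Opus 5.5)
Your one-point estimate is sound, and it takes a different route from the paper. The paper rescales $\widetilde v(t,\cdot)\overset{d}{=}w_a(at,\cdot)$ and then simply cites Cranston--Mountford \cite{CM2006lyapunov}. Your route works: $\log\widetilde v(t,x)$ is $\sqrt{at}$-Lipschitz in the Cameron--Martin norm. Your worry about the mean is resolved by superadditivity of $t\mapsto\bE\log\widetilde v(t,x)$, which follows from the Markov property at an intermediate time, Jensen, and independence and spatial stationarity of the noise. Together with concentration this gives $\bE\log\widetilde v(t,x)\le\lambda t$ for all $t$, where $\lambda\le Ca/\log a$ is the almost sure exponent from \cite{CV1998almost}.

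The genuine gap is the passage to $\sup_{|x|\le t}$, and your unit-cube Harnack step does not close. After writing $\widetilde v(t,x)=\int K(x,y)\,\widetilde v(t-1,y)\,\d y$ with the random unit-time kernel $K$, a bound $\widetilde v(t,x)\le\e^{at/12}\widetilde v(t,x_i)$ needs $K(x,y)/K(x_i,y)$ controlled uniformly in $y$. The Gaussian part of that ratio is $\exp\big((|x_i-y|^2-|x-y|^2)/2\big)$, which is unbounded in $y$. Absorbing the factor $\e^{C|y-x_i|}$ requires knowing that $K(x_i,y)\widetilde v(t-1,y)\,\d y$ carries little mass at large $|y-x_i|$. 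That is spatial control of $\widetilde v(t-1,\cdot)$ on a large region, which is essentially the supremum bound you are proving, so the argument is circular. The random part of $K$ is an exponential functional of stochastic integrals along Brownian bridges ending at arbitrary $y\in\mR^d$. It is not a Gaussian field on a compact set, and the log-ratio of these functionals for $x\neq x_i$ is not bounded pathwise, so Garsia--Rodemich--Rumsey does not control it. Incidentally, a Gaussian of variance $O(a)$ exceeds $at/12$ with probability $\e^{-cat^2}$, not $\e^{-ct^2/a}$.

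The missing idea, used in the paper, is to compare nearby points along the same Brownian path, at polynomially small distances. Write $w_a(t,x)=E[\e^{\eta_a(t,x)}]$. Conditionally on $B$, the difference $\eta_a(t,x)-\eta_a(t,y)$ is centered Gaussian with variance $2t(1-\Theta(x-y))\le Ct|x-y|$. Split path space by $A=\{|\eta_a(t,x)-\eta_a(t,y)|>(t|x-y|)^{1/3}\}$. Cauchy--Schwarz gives $\bE E[\e^{\eta_a(t,x)}\1_A]\le\e^{t}\e^{-2c(t|x-y|)^{-1/3}}$. Chebyshev then yields $w_a(t,x)\le w_a(t,y)\e^{(t|x-y|)^{1/3}}+\e^{t/2-c(t|x-y|)^{-1/3}}$ outside an event of probability $\e^{t/2-c(t|x-y|)^{-1/3}}$. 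For $|x-y|\le t^{-10}$ one has $(t|x-y|)^{-1/3}\ge t^{3}$, so the bad contribution beats the $\e^{t}$ size of the second moment. At unit distance the bad path set is in general only exponentially unlikely, with a rate too small to compensate $\e^{t}$; this is why the mesh must shrink polynomially in $t$. Finally, chain along the dyadic grids $2^{-i}t^{-10}\mZ^d\cap[-t,t]^d$, whose cardinalities are polynomial in $t$ times $2^{di}$, and apply your one-point bound on the coarsest grid. This gives $\sup_{|x|\le t}w_a(t,x)\le\e^{(\lambda(a)+1/10)t}$ off an event of probability $\e^{-ct}$, and rescaling back finishes the proof.
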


  \medskip

  \begin{proof}[Proof of Theorem \ref{thm:2}]
  	Take $a\geq N_0\geq 1$.
  	Define
  	$$
	m_n(A)=m(B_n(0)\cap A) ~\mbox{ and }~ m'_n(A)=m(B^c_n(0)\cap A).
	$$

  	Let $u^k$ be the solution of equation \eqref{eq:spde-clt} with $u_0=k$, and  \(w^k\) be the solution to
  	$$
  	 \p_tw^k(t)= \frac{1}{2}\Delta w^k(t)-\frac{1}{2}(w^k(t))^2,\quad w^k(0)=k.
  	$$
	Noting that the map \(f\mapsto \e^{-\<f, \mu\>}\) is concave, by Jensen's inequality and \eqref{eq:clt}, we have
	   \begin{equation}\label{eq:by-Jensen}
  	\bE_{\mu}	\e^{-\<k,X_{n/2}\>}= \bE_{\mu} \left( \bE_{\mu}  ( \e^{-\<k,X_{n/2}\>} | W ) \right) = \bE \e^{-\< u^k_{n/2},\mu\>}\geq \e^{-\< \bE u^k_{n/2},\mu\>}.
        \end{equation}
  Taking expectation on both side of equation \eqref{eq:spde-clt},
   and defining $\alpha := \bE [u^k]$, we have by the Cauchy-Schwarz inequality that
  	$$
  	  \partial_t \alpha = \frac{1}{2}\Delta \alpha - \frac{1}{2}\alpha^2 + \frac{1}{2} (\bE u^k)^2 - \frac{1}{2}\bE [(u^k)^2] \leq \frac{1}{2}\Delta \alpha - \frac{1}{2}\alpha^2,
  	$$
 in the sense of distribution. Thus, by the comparison principle (see Lemma \ref{lem:comparion} and Remark \ref{rmk:bdd-initial}), we conclude that $\bE [u^k(t)] \leq w^k(t)$. Note that $w^k(t)$ is spatially homogeneous, i.e. $w^k(t,x)=w^k(t)$, one sees that
  	$$
  	  w^k(t)=\frac{1}{t/2+1/k}.
  	$$	
  	Hence
  	$$
  	  \bE [ u^k(t) ] \leq w^k(t)=\frac{1}{t/2+1/k}.
  	$$
  	This to together with \eqref{eq:by-Jensen} and the Markov property of $X$ yield that
  	\begin{equation}\label{eq:Prob_Xn}
	\begin{aligned}
		\bP_{m_n} (X_{n}=0)=&\lim_{k\rightarrow \infty} \bE_{m_n}\e^{-\<k, X_{n}\>}=\lim_{k\rightarrow \infty} \bE_{m_n}\bE_{X_{n/2}}\e^{-\<k,X_{n/2}\>}\\
		\geq& \lim_{k\rightarrow \infty} \bE_{m_n}\e^{-\frac{1}{n/4+1/k}\<1,X_{n/2}\>}\geq \lim_{k\rightarrow \infty} \bE \e^{-\frac{1}{n/4+1/k}\l\<v(\frac{n}{2}),m_n\r\>}\\
		=& \bE \exp\left(-\frac{4}{n}\int_{|x|\leq n}v(n/2,x)\d x\right).
	\end{aligned}
	\end{equation}
	Now set
  	$$
  	  A(n)=\l\{\int_{|x|\leq n}v(n,x)\d x\leq \e^{-\frac{a}{4}n} \r\}.
  	$$
	By Lemma \ref{lem:LDP}, for large $n$,
  	\begin{equation}\label{eq:P(A(n)}
  		\bP (A(n))\ge 1-\e^{-c n},
  	\end{equation}
  	where $c=c(d, a, \Theta)$. Therefore, by \eqref{eq:Prob_Xn} and \eqref{eq:P(A(n)}, we have
	\begin{align*}
  		\bP_{m_n}(X_{n}\neq0)=&1-\bP_{m_n}(X_n= 0)\\
		\leq& 1- \bE \left[\exp\left(-\frac{4}{n}\int_{|x|\leq n}v(n/2,x)\d x\right); A(n)\right]\\
		\leq& 1-\bP (A(n))\exp\left(-4\e^{-\frac{a}{8}n}\right)\leq \e^{-c n}.
  	\end{align*}

  	According to \cite[Lemma 4.1]{MX2007local}, there is  probability space $\l(\Omega,\mathcal{F},\bP\r)$ and common noise $W$ such that $(X^n, W)$ and $(\overline{X}^n, W)$ are solutions to \eqref{eq:CMP}  with initial measure $m_n$ and $m'_n$, respectively. Moreover, $X^n$ and $\overline{X}^n$ are independent  under conditional probability $\bP^W$. By the proof for \cite[Theorem 1.1]{MX2007local}, there exists $\delta'>0$ such that
  	$$
  	  \bP\l(\int_0^{n+1}\overline{X}^{n}_s(K)ds\neq 0\r)\leq C \e^{-n^{\delta'}}.
  	$$
  	Then we have
  	\begin{align*}
  		&\bP\l(X_t(K)\neq 0, ~ \exists t\in [n,n+1]\r)\leq \bP \l(X^{n}_{n}\neq0 \,\,\,\mbox{ or } \int_0^{n+1}\overline{X}^{n}_s(K)ds\neq0\r)\\
  		\leq& \bP\l(X^n_{n}\neq0\r)+\bP\l(\int_0^{n+1}\overline{X}^{n}_s(K)ds\neq 0\r)\leq \e^{-c n}+C \e^{-n^{\delta'}},
  	\end{align*}
  	Thanks to Borel-Cantelli's Lemma, for any compact set $K$
  	$$
  	  \bP\l( \l\{X_t(K)\neq 0, ~ \exists t\in [n,n+1] \r\} ~ i.o. \r)=0.
  	$$
  	This implies that there exists a random time $T_{K}$, such that
  	$$
  	  X_t(K)=0 \quad \hbox{for }  t\geq T_K.
  	$$
  \end{proof}

\appendix

\section{Uniqueness of Martingale Problem with infinite initial measure}\label{Sec:App-MP}
\setcounter{equation}{0}
\renewcommand\theequation{A.\arabic{equation}}

In this section, we follow the idea of \cite[Section 4]{Myt1996environments} to present a complete proof for the uniqueness of the martingale problem \eqref{eq:MP} with infinite initial measure \(\mu\in \cM_{\rho}(\mR^d)\). The proofs are similar to those in \cite[Section 4]{Myt1996environments}, but  there are several key points  that need to be addressed and details supplied.

\medskip

The following uniform  moment boundedness result is an analog
of \cite[Lemma 4.11]{Myt1996environments}.
\begin{lemma}\label{lem:moment-bound}
Let $\rho>0$, $k\geq 1$, and let $X$ be any solution to \eqref{eq:MP} with initial measure $\mu \in \cM_\rho(\mathbb{R}^d)$. For any $T>0$, it holds that
$$
\sup_{t\in[0,T]} \bE\left[\<\phi_\rho, X_t\>^k\right] <\infty .
$$
\end{lemma}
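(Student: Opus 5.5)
We plan to work directly with the martingale problem \eqref{eq:MP}, applying It\^o's formula to powers of $\<\phi_\rho, X_t\>$ and closing the estimate with Gronwall's inequality. Since $\phi_\rho\notin C^2_c(\mR^d)$, we first extend \eqref{eq:MP} to $f=\phi_\rho$. Take cutoffs $\chi_R\in C^2_c$ with $\chi_R=1$ on $B_R(0)$ and $|\nabla^j\chi_R|\le C R^{-j}$. Since $|\Delta\phi_\rho|\le C\phi_\rho$ and $|\nabla \phi_\rho|\le C\phi_\rho$, we get $|\Delta(\chi_R\phi_\rho)|\le C\phi_\rho$ uniformly in $R$. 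To avoid integrability issues for an arbitrary solution $X$, we localize with the stopping times $\tau_N:=\inf\{t: \<\phi_\rho,X_t\>\ge N\}$. This requires first checking that $t\mapsto \<\phi_\rho,X_t\>$ is finite and adapted: monotone convergence in $R$ gives $\<\phi_\rho, X_t\>=\lim_R\<\chi_R\phi_\rho,X_t\>$, which is a priori possibly $+\infty$.

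The first step is the case $k=1$. Stop $M^{\chi_R\phi_\rho}$ at $\tau_N$ and take expectations:
\[
\bE\<\chi_R\phi_\rho,X_{t\wedge\tau_N}\>\le \<\phi_\rho,\mu\>+C\int_0^t \bE\<\phi_\rho, X_{s\wedge\tau_N}\>\d s .
\]
Fatou's lemma in $R$ and Gronwall's inequality give $\bE\<\phi_\rho,X_{t\wedge\tau_N}\>\le C(T)\<\phi_\rho,\mu\>$ uniformly in $N$. For this bound to yield anything, we need $\tau_N\to\infty$ a.s. To get it, we will use the maximal inequality for the nonnegative supermartingale-type process $\e^{-Ct}\<\phi_\rho,X_t\>$ (up to the martingale part). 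We take the paths of $X$ to be continuous in the vague topology and use lower semicontinuity of $\mu\mapsto\<\phi_\rho,\mu\>$. Concretely, applying Doob's inequality to $\sup_{t\le T}\<\chi_R\phi_\rho,X_{t\wedge\tau_N}\>$ and letting $R\to\infty$ gives $\bP(\tau_N\le T)\le C(T)/N$.

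For general $k$ we argue by induction. It\^o's formula for $F(x)=x^k$ applied to $Y^R_t:=\<\chi_R\phi_\rho,X_t\>$ gives the drift terms
\[
kY^{k-1}\tfrac12\<\Delta(\chi_R\phi_\rho),X\>\quad\text{and}\quad \tfrac{k(k-1)}{2}Y^{k-2}\Big(\<(\chi_R\phi_\rho)^2,X\>+\<\cC\,(\chi_R\phi_\rho)^{\otimes2},X\otimes X\>\Big).
\]
Using $\phi_\rho^2\le\phi_\rho$ and $\|\cC\|_\infty<\infty$, these are bounded by
\[
C\big(\<\phi_\rho,X\>^{k}+\<\phi_\rho,X\>^{k-1}\big)\le C\big(1+\<\phi_\rho,X\>^k\big).
\]
After stopping at $\tau_N$, the stochastic integral is a true martingale, since its integrand is bounded by a constant depending on $N$. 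Taking expectations, letting $R\to\infty$ by Fatou's lemma and monotone convergence, and applying Gronwall yields
\[
\bE\<\phi_\rho,X_{t\wedge\tau_N}\>^k\le \big(\<\phi_\rho,\mu\>^k+CT\big)\e^{CT}
\]
uniformly in $N$ and $t\le T$. Letting $N\to\infty$ with Fatou's lemma, using $\tau_N\to\infty$, gives the claim.

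The main obstacle is the a priori step: showing that $\<\phi_\rho,X_t\>$ is finite and that $\tau_N\uparrow\infty$ for an arbitrary solution of \eqref{eq:MP}, where only test functions in $C^2_c$ are given. The plan is to obtain it from the $k=1$ estimate for the truncated functionals $\<\chi_R\phi_\rho,X_t\>$, which are genuine martingale-problem functionals, together with Doob's $L^1$ maximal bound. This gives $\bE\sup_{t\le T}\<\chi_R\phi_\rho,X_t\>$ bounded uniformly in $R$, modulo controlling the martingale part through its quadratic variation, again bounded in terms of first and second moments and handled by a preliminary localization in $R$. Monotone convergence then yields $\sup_{t\le T}\<\phi_\rho,X_t\><\infty$ a.s.
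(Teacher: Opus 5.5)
Your core computation is the paper's: the cutoff $f_R=\chi_R\phi_\rho$ with $|\Delta f_R|\le C\phi_\rho$, the stopping times $\tau_N$, It\^o's formula for $x^k$, the bounds $\phi_\rho^2\le\phi_\rho$ and $\|\cC\|_\infty<\infty$, Fatou in $R$, Gronwall uniformly in $N$, then Fatou in $N$. The paper lets $N\to\infty$ without comment. You rightly observe that this needs $\tau_N\uparrow\infty$, but the argument you give for it does not work.

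\textbf{Failure of the stopped Doob bound.} It yields $\bP\big(\sup_{t\le T}\<\phi_\rho,X_{t\wedge\tau_N}\>\ge N\big)\le C(T)/N$. To turn this into $\bP(\tau_N\le T)\le C(T)/N$ you need $\<\phi_\rho,X_{\tau_N}\>\ge N$ on $\{\tau_N\le T\}$. Vague continuity only makes $t\mapsto\<\phi_\rho,X_t\>$ lower semicontinuous, and that gives the opposite inequality $\<\phi_\rho,X_{\tau_N}\>\le N$ (for $N>\<\phi_\rho,\mu\>$). Concretely, take a vaguely continuous path in which mass enters from infinity right after a time $t_0$, so that $\<\phi_\rho,X_{t_0}\><\infty$ but $\<\phi_\rho,X_t\>\to\infty$ as $t\downarrow t_0$. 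Then $\tau_N=t_0$ for every large $N$, while every quantity stopped at $\tau_N$ stays bounded. No estimate on $X_{\cdot\wedge\tau_N}$ can rule this out.

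\textbf{Circularity of the fallback.} You propose to bound $\bE\sup_{t\le T}\<\chi_R\phi_\rho,X_t\>$ uniformly in $R$ without stopping. But the drift $\frac12\<\Delta(\chi_R\phi_\rho),X_s\>$ is controlled only by $\<\phi_\rho,X_s\>$, or by $\<\chi_{R'}\phi_\rho,X_s\>$ with $R'>R$. That is the unstopped quantity you are trying to bound. No localization in $R$ closes this hierarchy: by the strong maximum principle, no nonzero nonnegative compactly supported $\psi$ satisfies $\Delta\psi\le C\psi$.

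\textbf{What closes the argument.} The paper's final step is legitimate under the implicit requirement that $X$ have continuous paths in $\cM_\rho(\mR^d)$, as the solutions built in Proposition~\ref{prop:CMP} do. Then $t\mapsto\<\phi_\rho,X_t\>$ is continuous, so $\<\phi_\rho,X_{\tau_N}\>=N$ on $\{\tau_N\le T\}$ for $N>\<\phi_\rho,\mu\>$. Your own uniform bound at $t=T$ then gives $N^k\bP(\tau_N\le T)\le C(T)$. Hence $\tau_N\uparrow\infty$ a.s.\ and Fatou finishes, with no maximal inequality needed.

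If you want the lemma for solutions that are merely vaguely continuous, you need a genuinely new input giving an unstopped a priori bound on $\bE\<\phi_\rho,X_t\>$. Once that is available, $\e^{-Ct}\<\phi_\rho,X_t\>$ is a nonnegative supermartingale. Lower semicontinuity then lets you apply Doob's inequality over a countable dense set of times to conclude $\sup_{t\le T}\<\phi_\rho,X_t\><\infty$ a.s.
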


\begin{proof}
	The martingale problem is formulated using test functions in $C_c^2(\mathbb{R}^d)$. Since $\phi_\rho$ does not have compact support, we introduce a standard cutoff sequence. Let $\chi_R \in C_c^\infty(\mathbb{R}^d)$ such that $0 \leq \chi_R \leq 1$, $\chi_R(x) = 1$ for $|x| \leq R$, $\chi_R(x) = 0$ for $|x| \geq R+1$, with globally bounded derivatives $|\nabla \chi_R| \leq 2$ and $|\Delta \chi_R| \leq 4$. Define the compactly supported test function $f_R(x) := \phi_\rho(x)\chi_R(x) \in C_c^\infty(\mathbb{R}^d)$. Using the product rule, we have $\Delta f_R(x) \leq C \phi_\rho(x)$,
where $C$ is a positive constant.

Define the stopping time $\tau_N = \inf\{ t\geq 0 : \<\phi_\rho, X_t\> \geq N \}$. Applying It\^o's formula to the continuous semimartingale $\<f_R, X_{t\wedge\tau_N}\>^k$, we get from \eqref{eq:MP} that
\begin{align*}
\<f_R, X_{t\wedge\tau_N}\>^k =& \<f_R, \mu\>^k + \frac{k}{2}\int_0^{t\wedge\tau_N} \<f_R, X_s\>^{k-1} \<\Delta f_R, X_s\> \mathrm{d}s \\
&+ \frac{1}{2}k(k-1)\int_0^{t\wedge\tau_N} \<f_R, X_s\>^{k-2} \mathrm{d}\langle M_t^{f_R} \rangle_s + \text{Martingale} ,
\end{align*}
where $M_t^{f_R} $ is the martingale defined by \eqref {eq:MP} with $f_R$ in place of $f$ there.
Taking expectations, we obtain
\begin{align*}
\bE\left[\<f_R, X_{t\wedge\tau_N}\>^k\right] & \leq \<\phi_\rho, \mu\>^k + C k\int_0^t \bE\left[\<\phi_\rho, X_{s\wedge\tau_N}\>^k\right] \mathrm{d}s \\
& + \frac{1}{2}k(k-1) \int_0^t \bE\left[ \<\phi_\rho, X_{s\wedge\tau_N}\>^{k-2} \left(
\<\phi_\rho^2, X_{s\wedge\tau_N}\>
+ \|\cC\|_\infty \<\phi_\rho, X_{s\wedge\tau_N}\>^2 \right) \right] \mathrm{d}s.
\end{align*}
Noticing that $\phi_\rho^{2}\leq\phi_\rho$, letting $R\to\infty$, we get
$$ \bE\left[X_{t\wedge\tau_N}(\phi_\rho)^k\right] \leq \<\phi_\rho, \mu\>^k +
C(k)T+C(k)
\int_0^t \bE\left[\<\phi_\rho, X_{s\wedge\tau_N}\>^k\right] \mathrm{d}s. $$
By Gronwall's inequality, $\bE[X_{t\wedge\tau_N}(\phi_\rho)^k]$ is bounded uniformly in $N$. Letting $N \to \infty$ and applying Fatou's lemma yields the desired uniform bound.
\end{proof}

 Let $\xi$ be a random fields on $\mR^d$ with correlation function 
$\cC (x, y)$. Define $\xi^{(n)}(x)$ as the truncation of $\xi$ (see \cite[p.1968]{Myt1996environments}):
$$
\xi^{(n)}(x):=\left\{\begin{array}{ll} \sqrt{n},\quad &
\xi(x)>\sqrt{n},\\
-\sqrt{n}, & 
\xi(x)<-\sqrt{n},\\
\xi(x), & \mbox{otherwise}.\end{array}\right.
$$
  Then $\cC_n(x,y):=\bE \left[ \xi^{(n)}(x)\xi^{(n)}(y) \right]$ and  $\bE[\xi^{n)}(x)\xi^{(n)}(y)\xi^{(n)}(z)]$, as function on
  $\mR^{2d}$  and $\mR^{3d}$ respectively,  are uniformly bounded in $n$.
Let
\(0\leq \phi \in \Phi_\rho\) and \(Y^{(n)}_t\) be the same rcll process on $[0, \infty)$ as defined in   \cite[p.1969]{Myt1996environments},
taking values in the space of non-negative Borel measurable bounded functions on $\R^d$ equipped with the uniform norm.
  By the discussion therein, \(Y^{(n)}\) is the unique solution to the following stochastic evolution equation:
\[
Y^{(n)}_t(x)=P_t\phi (x)-\int_0^t P_{t-s} \left(\frac{1}{2}{Y_s^{(n)}}^2 \right)(x) \d s +\int_0^t \int_{\cB_b(\mR^d)} P_{t-s}\left( \frac{h}{\sqrt{n}}Y^{(n)}_{s-}\right)(x) N^{(n)}(\d s, \d h),
\]
where \(N^{(n)}\) is a Poisson random measure on \(\mR_+\times \cB_b(\mR^d)\) with intensity measure \(n\d s \times \bP_{\xi^{(n)}}(\d h)\),
where $\bP_{\xi^{(n)}}(\d h)$ denotes the distribution of $\xi^{(n)}$.

\medskip

The following moment estimates for the dual process is an improvement of Lemma 4.15 in \cite{Myt1996environments}, which is crucial for the uniqueness proof of the martingale problem with infinite initial measure.
\begin{lemma}[Third Moment Estimate for the Dual Process]\label{lem:dual_3moment}
For any $T>0$, there exists $C(T) > 0$
independent of $n$ and $x$ such that
$$
\sup_{n\geq 1}\sup_{t\in[0,T]} \bE \left[(Y_t^{(n)}(x))^3\right] \leq C(T) \phi^3_\rho(x) \quad \hbox{ for every }  x\in \mR^d.
$$
Consequently,
$$
\sup_{n\geq 1}\sup_{t\in[0,T]} \bE \left[ Y_t^{(n)}(x) Y_t^{(n)}(y) \right]\leq C(T) \phi_\rho(x) \phi_\rho(y)
\quad \hbox{ for every }  x,y\in \mR^d.
$$
\end{lemma}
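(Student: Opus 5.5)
We will bound the third moment by a moment recursion based on the mild (Duhamel) form of the evolution equation for $Y^{(n)}$, exploiting the nonnegativity of $Y^{(n)}$ (so that the quadratic drift term can be dropped) and the compensated structure of the Poisson integral. The second assertion will then follow from H\"older's inequality.

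\emph{Step 1: reduction to a linear problem.} Since $Y^{(n)}\ge 0$ and the term $-\int_0^t P_{t-s}(\frac12 (Y^{(n)}_s)^2)\,\d s$ is nonpositive, we have $Y^{(n)}_t\le Z^{(n)}_t$. Here $Z^{(n)}$ solves the same equation without the quadratic term,
\[
Z^{(n)}_t=P_t\phi+\int_0^t\!\int P_{t-s}\Big(\tfrac{h}{\sqrt n}Z^{(n)}_{s-}\Big)N^{(n)}(\d s,\d h).
\]
This comparison can be checked on the particle/jump description: between jumps both processes evolve by a semigroup flow, and each jump multiplies by $(1+h/\sqrt n)\ge 0$. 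So it suffices to bound $\bE[(Z^{(n)}_t(x))^3]$.

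\emph{Step 2: moment formula for $Z^{(n)}$.} Between the jump times of $N^{(n)}$, the process $Z^{(n)}$ evolves by $P_t$. At a jump with mark $h$ it is multiplied pointwise by $1+h/\sqrt n$. Hence $Z^{(n)}_t(x)$ has a representation via three independent Brownian paths started at $x$, and
\[
\bE\big[(Z^{(n)}_t(x))^3\big]=E_{x,x,x}\Big[\prod_{i=1}^3\phi(B^i_t)\,\bE\prod_{\text{jumps }s_j\le t}\prod_{i=1}^3\Big(1+\tfrac{h_j(B^i_{t-s_j})}{\sqrt n}\Big)\Big].
\]
Let $\mathbf C^{(3)}_n$ denote the triple correlation $\bE[\xi^{(n)}(x)\xi^{(n)}(y)\xi^{(n)}(z)]$. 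Using the Poisson structure with rate $n$, together with $\bE h=0$ (up to the truncation error, of order $\bE|\xi|^3/n$, which is fine) and the uniform bounds on $\cC_n$ and $\mathbf C^{(3)}_n$, the inner expectation is
\[
\exp\Big(n t\,\bE\big[\textstyle\prod_i(1+h/\sqrt n)-1\big]\Big)\le \exp\big(t(3\sup\cC_n+\sup|\mathbf C^{(3)}_n|/\sqrt n+O(1))\big)\le C(T),
\]
uniformly in $n$. Alternatively, one can write the cubic It\^o/Duhamel identity for $\bE[Z(x)Z(y)Z(z)]$ and apply Gronwall with these bounds. This gives
\[
\bE\big[(Z^{(n)}_t(x))^3\big]\le C(T)\,E_x[\phi(B_t)]^3=C(T)(P_t\phi(x))^3.
\]

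\emph{Step 3: decay.} Since $0\le\phi\le N\phi_\rho$, \eqref{eq:Ptphi} gives $P_t\phi\le CN\phi_\rho$ for $t\le T$. Hence $\bE[(Y^{(n)}_t(x))^3]\le C(T)\phi_\rho^3(x)$. For the second claim, H\"older's inequality gives
\[
\bE[Y(x)Y(y)]\le(\bE Y(x)^3)^{1/3}(\bE Y(y)^3)^{1/3}\le C\phi_\rho(x)\phi_\rho(y).
\]

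The main obstacle is making Step 2 rigorous. The point is that the interchange producing the exponential bound must hold uniformly in $n$: the compensator terms of order $n\cdot n^{-1/2}$ cancel only because $\xi^{(n)}$ is nearly centered. We will handle this by controlling $|\bE\xi^{(n)}|\le \bE[|\xi|^3]/n$, which, multiplied by the intensity factor $n\cdot n^{-1/2}$, is $O(n^{-1/2})$.
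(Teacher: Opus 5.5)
Your argument is correct, but it takes a different route from the paper's. The paper never compares $Y^{(n)}$ with a linear process. It applies It\^o's formula to $V_r(x)^3$ with $V_r(x)=P_{t-r}Y^{(n)}_r(x)$, discards the nonpositive contribution of the quadratic drift, expands $(V+B)^3-V^3$, removes the linear term by centering, and bounds the $B^2$ and $B^3$ terms by Jensen. It then uses H\"older in the form $V_s\,P_{t-s}((Y^{(n)}_s)^2)\le P_{t-s}((Y^{(n)}_s)^3)$. This closes the integral inequality $u(t,x)\le (P_t\phi(x))^3+C\int_0^t P_{t-s}u(s)(x)\,\d s$ for $u=\bE[(Y^{(n)})^3]$. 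The paper finishes with Gronwall in the weighted norm $\sup_x u(t,x)\phi_\rho^{-3}(x)$ together with \eqref{eq:Ptphi}.

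You instead use the explicit structure of the dual: a deterministic flow between Poisson times, and multiplicative jumps by factors $1+h/\sqrt n\in[0,2]$. This lets you dominate $Y^{(n)}$ pathwise by the linear process $Z^{(n)}$. You then compute $\bE[(Z^{(n)}_t(x))^3]$ exactly through three independent Brownian paths and the exponential formula $\bE\prod_j F(s_j,h_j)=\exp\bigl(\int (F-1)\,n\,\d s\,\bP_{\xi^{(n)}}(\d h)\bigr)$ for nonnegative $F$.

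What your route buys:
\begin{enumerate}[(a)]
\item A sharper, explicit bound $\e^{Ct}(P_t\phi(x))^3$.
\item No Gronwall step, so you never need to know a priori that the weighted supremum is finite, which the paper uses tacitly.
\item A more careful treatment of truncation than the paper's assertion $\bE_{\xi^{(n)}}[\xi^{(n)}]=0$. Your estimate $\sqrt n\,|\bE\xi^{(n)}|\le \sup_x\bE|\xi(x)|^3/\sqrt n$ is exactly what is needed to repair either argument.
\end{enumerate}
The paper's route, on the other hand, works directly with the nonlinear equation using only convexity inequalities. It does not depend on the pathwise comparison or on the product representation.

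When you write out Step 2, replace the shorthand $\exp(nt\,\bE[\prod_i(1+h/\sqrt n)-1])$ by $\exp\bigl(n\int_0^t(\bE[\prod_{i=1}^3(1+\xi^{(n)}(B^i_{t-s})/\sqrt n)]-1)\,\d s\bigr)$, taken conditionally on the Brownian paths, and bound the integrand uniformly. The rest goes through as you describe.
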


\begin{proof}
Define
\begin{align*}
V_r(x) &:= P_{t-r} Y_r^{(n)}(x) \\
&= P_t\phi(x) - \int_0^r P_{t-s}\left(\frac{1}{2}(Y_s^{(n)})^2\right)(x) \mathrm{d}s + \int_0^r \int_{\cB_b(\mR^d)} P_{t-s}\left(\frac{h}{\sqrt{n}}Y_{s-}^{(n)}\right)(x) N^{(n)}(\mathrm{d}s, \mathrm{d}h).
\end{align*}
For any fixed $x \in \mR^d$, $V_r(x)$ is a standard semimartingale on $r \in [0, t]$.

We apply It\^o's formula to the function $F(x) = x^3$
for
the process $V_r(x)$,  evaluate it up to $r=t$, and take the expectation. Notice that $V_t(x) = Y_t^{(n)}(x)$ and $V_0(x) = P_t\phi(x)$. Replacing the Poisson random measure with its compensator $\widehat{N}^{(n)}(\mathrm{d}s, \mathrm{d}h) = n \mathrm{d}s \otimes \mathbf{P}_{\xi^{(n)}}(\mathrm{d}h)$, we obtain:
\begin{align*}
\bE \left[(Y_t^{(n)}(x))^3\right] &= (P_t\phi(x))^3 - 3 \int_0^t \bE \left[ (V_s(x))^2 \cdot P_{t-s}\left(\frac{1}{2}(Y_s^{(n)})^2\right)(x) \right] \mathrm{d}s \\
&\quad + n \int_0^t \bE \left[ \bE _{\xi^{(n)}}\left[ (V_s(x) + B_s(x))^3 - (V_s(x))^3 \right] \right] \mathrm{d}s,
\end{align*}
where $B_s(x) := \frac{1}{\sqrt{n}}P_{t-s}(\xi^{(n)} Y_s^{(n)})(x)$,
$\bE _{\xi^{(n)}}$ denotes expectation with respect to $\bP_{\xi^{(n)}}(\d h)$.
 Since $Y_s^{(n)} \geq 0$ and $P_{t-s}$ preserves positivity,
 the second term of the right hand side of the above equality is non-positive, so we drop it for an upper bound.

Note that
$(V_s + B_s)^3 - V_s^3 = 3V_s^2 B_s + 3V_s B_s^2 + B_s^3$.

\begin{enumerate}[(i)]
    \item \textit{The $3V_s^2 B_s$ term:}
    Since $\bE _{\xi^{(n)}}[\xi^{(n)}] = 0$, we may change the order of $\bE _{\xi^{(n)}}$ and  $P_{t-s}$, and then  we get $0$ for this term.

    \item \textit{The $3V_s B_s^2$ term:}
    We apply Jensen's inequality $(P_{t-s} f)^2 \leq P_{t-s}(f^2)$:
    \begin{align*}
    \bE _{\xi^{(n)}}\left[ B_s(x)^2 \right] &= \frac{1}{n} \bE _{\xi^{(n)}}\left[ (P_{t-s}(\xi^{(n)} Y_s^{(n)})(x))^2 \right] \\
    &\leq \frac{1}{n} \bE _{\xi^{(n)}}\left[ P_{t-s}( (\xi^{(n)})^2 (Y_s^{(n)})^2 )(x) \right] \\
    &\leq \frac{C}{n} P_{t-s}\left( (Y_s^{(n)})^2 \right)(x),
    \end{align*}
where in the last inequality we used the uniform boundedness of  $\cC_n(x,y)$.
    \item \textit{The $B_s^3$ term:} Similarly, using $(P_{t-s} f)^3 \leq P_{t-s}(f^3)$  for $f\geq 0$:
    $$ \bE _{\xi^{(n)}}\left[ B_s(x)^3 \right] \leq \frac{1}{n^{3/2}} P_{t-s}\left( \bE _{\xi^{(n)}}[|\xi^{(n)}|^3] (Y_s^{(n)})^3 \right)(x) \leq \frac{C}{n^{3/2}} P_{t-s}\left( (Y_s^{(n)})^3 \right)(x). $$
\end{enumerate}

Substituting these bounds back into the expectation integral, the factor $n$ cancels out:
\begin{align}\label{int-Y3}
\bE \left[(Y_t^{(n)}(x))^3\right] \leq & (P_t\phi(x))^3 + C \int_0^t \bE \left[ V_s(x) P_{t-s}\left( (Y_s^{(n)})^2 \right)(x) + n^{-\frac{1}{2}} P_{t-s}\left( (Y_s^{(n)})^3 \right)(x) \right] \mathrm{d}s.
\end{align}
By H\"older's inequality,
$$ V_s(x) = P_{t-s}(Y_s^{(n)})(x) \leq \left[ P_{t-s}\left( (Y_s^{(n)})^3 \right)(x) \right]^{1/3}, $$
$$ P_{t-s}\left( (Y_s^{(n)})^2 \right)(x) \leq \left[ P_{t-s}\left( (Y_s^{(n)})^3 \right)(x) \right]^{2/3}.
$$
 Multiplying these together, we get the elegant deterministic bound:
$$ V_s(x) \cdot P_{t-s}\left( (Y_s^{(n)})^2 \right)(x) \leq P_{t-s}\left( (Y_s^{(n)})^3 \right)(x). $$

Let $u(t, x) := \bE [(Y_t^{(n)}(x))^3]$. Fubini's theorem implies $P_{t-s}(\bE [f]) = \bE [P_{t-s}(f)]$. Thus,
the inequality \eqref{int-Y3} simplifies to
\begin{equation}\label{int-u}
u(t, x) \leq (P_t\phi(x))^3 + C \int_0^t P_{t-s}( u(s, \cdot) )(x) \mathrm{d}s.
\end{equation}
Let us define the weighted supremum norm of $u(t, \cdot)$ as:
$$
v(t) := \sup_{x \in \mR^d} u(t, x)\phi_\rho^{-3}(x), \quad t \in [0, T].
$$
By \eqref{int-u} and \eqref{eq:Ptphi}, we have
\begin{align*}
u(t, x) &\leq C \phi_\rho^3(x) + C \int_0^t P_{t-s}\left( v(s) \phi_\rho^3 \right)(x) \mathrm{d}s \\
&\leq
C(T)\phi_\rho^3(x) + C(T)
 \int_0^t v(s) \phi_\rho^3(x) \mathrm{d}s,
\end{align*}
i.e. \(v(t) \leq C(T) + C(T) \int_0^t v(s) \d s\).
Applying Gronwall's inequality, we conclude $u(t, x) \leq C(T)\phi_\rho^3(x)$, which completes the proof.
\end{proof}

The previous lemma ensures that $Y^{(n)}_t \in \Phi_\rho$ for each $t \geq 0$ whenever $\phi \in \Phi_\rho$, a property essential for the well-definedness and integrability of the duality identity in Lemma \ref{lem:dual_identity} below. This identity is obtained by applying
an  analysis result \cite[Lemma 4.17]{Myt1996environments} to the function
 $f(s,t) = \mathbf{E}\left[\exp\left(-X_s(P_{T-t}Y_t^{(n)})\right)\right]$. As the argument is
analogous to that of \cite[Lemma 4.18]{Myt1996environments} by using the estimate in Lemma \ref{lem:dual_3moment},
we omit the proof and proceed directly to the statement of the identity.

\begin{lemma}[Duality Identity]\label{lem:dual_identity}
Let $X$ solve \eqref{eq:MP} with $\mu\in \cM_\rho(\mR^d)$, independent of $Y^{(n)}$. For any $t\geq 0$ and $\phi\in \Phi_\rho$,
\begin{equation}\label{eq:dual_identity}
	\begin{aligned}
		& \bE\left[\exp(-\<\phi, X_t\>)\right] - \bE\left[\exp(-X_0(Y_t^{(n)}))\right] \\
		=& \bE\left[ \int_0^t \exp\left(-X_s(Y_{t-s}^{(n)})\right) \cdot \left( \frac{1}{2}\int_{E\times E} \cC(x,y)Y_{t-s}^{(n)}(x)Y_{t-s}^{(n)}(y) X_s(\mathrm{d}x)X_s(\mathrm{d}y) \right. \right. \\
		& \left. \left. - n\bE_{\xi^{(n)}}\left[ \exp\left(-X_s\left(\frac{\xi^{(n)}}{\sqrt{n}}Y_{t-s}^{(n)}\right)\right) - 1  \right] \right) \mathrm{d}s \right].
	\end{aligned}
\end{equation}
\end{lemma}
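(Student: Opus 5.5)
The plan is to prove the duality identity \eqref{eq:dual_identity} by the same two-parameter argument as in \cite[Lemma 4.18]{Myt1996environments}, with the moment bounds of Lemmas \ref{lem:moment-bound} and \ref{lem:dual_3moment} playing the role of the finite-mass estimates used there. Fix $t>0$, $\phi\in\Phi_\rho$, and work on a product space where $X$ and $Y^{(n)}$ are independent. Set
\[
F(s,r):=\bE\left[\exp\left(-X_s(Y^{(n)}_r)\right)\right],\qquad s,r\geq 0 .
\]
The identity then amounts to computing $F(t,0)-F(0,t)$ by means of \cite[Lemma 4.17]{Myt1996environments}. That lemma expresses $F(t,0)-F(0,t)$ as $\int_0^t[\partial_1F(s,t-s)-\partial_2F(s,t-s)]\,\d s$, provided the partial increments of $F$ are given by integrals of jointly measurable, integrable densities.

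\textbf{Step 1: derivative in $s$ (the $X$-direction).} Condition on $Y^{(n)}_r=\psi\in\Phi_\rho$. This is legitimate by independence, and $\psi\in\Phi_\rho$ a.s. by Lemma \ref{lem:dual_3moment}. I would show that \eqref{eq:MP} extends from $C^2_c$ to test functions in $\Phi_\rho$ with $\Delta\psi$ dominated by $C\phi_\rho$, using the cutoff $f_R=\psi\chi_R$ exactly as in Lemma \ref{lem:moment-bound}; Lemma \ref{lem:moment-bound} supplies uniform integrability of every term as $R\to\infty$. Since $Y^{(n)}_r$ need not be $C^2$, I would first replace it by $P_\epsilon Y^{(n)}_r$ and let $\epsilon\to 0$ at the end. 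Itô's formula for $\e^{-\<\psi,X_s\>}$ then gives
\[
\partial_sF(s,r)=\bE\left[\e^{-X_s(Y_r)}\left(-\tfrac12X_s(\Delta Y_r)+\tfrac12X_s(Y_r^2)+\tfrac12\<\cC\, Y_r\otimes Y_r,X_s\otimes X_s\>\right)\right].
\]

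\textbf{Step 2: derivative in $r$ (the $Y$-direction).} Condition on $X_s=\mu$. Write the generator of $Y^{(n)}$ from its evolution equation: the drift part $\frac12\Delta Y-\frac12Y^2$ and the Poisson jumps $Y\mapsto Y+\frac{h}{\sqrt n}Y$ with rate $n\,\bP_{\xi^{(n)}}(\d h)$. Itô's formula for jump processes applied to $\e^{-\mu(Y_r)}$ yields
\[
\partial_rF=\bE\left[\e^{-X_s(Y_r)}\left(-\tfrac12X_s(\Delta Y_r)+\tfrac12X_s(Y_r^2)\right)\right]+n\,\bE\left[\e^{-X_s(Y_r)}\,\bE_{\xi^{(n)}}\!\left[\e^{-X_s(\xi^{(n)}Y_r/\sqrt n)}-1\right]\right].
\]
The $\Delta$ and $Y^2$ terms cancel in $\partial_1F-\partial_2F$. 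What remains is exactly the integrand in \eqref{eq:dual_identity}, with $r=t-s$.

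\textbf{Main obstacle: justification.} The hard part is verifying the integrability hypotheses of \cite[Lemma 4.17]{Myt1996environments} uniformly on $[0,t]^2$ when $\mu$ is infinite. The quadratic term is bounded by
\[
\|\cC\|_\infty\,\bE\left[X_s(Y_r)^2\right]\leq C\,\bE\left[\<\phi_\rho,X_s\>^2\right],
\]
where I would use the bound $\bE[Y_r(x)Y_r(y)]\leq C\phi_\rho(x)\phi_\rho(y)$ from Lemma \ref{lem:dual_3moment}, together with independence and Lemma \ref{lem:moment-bound}. For the jump term I would use $|\e^{-z}-1+z|\leq z^2$ together with $\bE_{\xi^{(n)}}[\xi^{(n)}]=0$. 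This reduces $n\,\bE_{\xi^{(n)}}[\cdots]$ to a second-order quantity, again bounded by $C\,\bE[X_s(Y_r)^2]$; one has to be careful here because the exponent can be negative, but $|\xi^{(n)}|\leq\sqrt n$ gives $e^{-z}\leq e^{X_s(Y_r)}$, which is compensated by the prefactor $\e^{-X_s(Y_r)}$. The $\Delta$ term is not integrable on its own for rough $Y$; it is only handled after the $P_\epsilon$ smoothing. Since it cancels, I would pass to the limit $\epsilon\to0$ only in the final identity, using continuity of $X$ in $\cM_\rho(\mR^d)$ and dominated convergence with the third-moment bound from Lemma \ref{lem:dual_3moment}.
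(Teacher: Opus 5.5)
Your scheme is the paper's: Mytnik's Lemma 4.17 applied to a two-parameter Laplace functional of the independent pair $(X,Y^{(n)})$. The $s$-derivative comes from \eqref{eq:MP}, the $r$-derivative from the Poisson evolution equation of $Y^{(n)}$, and the common terms cancel. The real difference is how the Laplacian is handled. The paper, following Mytnik's Lemma 4.18, builds the heat semigroup into the functional, $f(s,t)=\bE[\exp(-X_s(P_{T-t}Y^{(n)}_t))]$. This is the mild form of the dual dynamics; compare $V_r=P_{t-r}Y^{(n)}_r$ in the proof of Lemma~\ref{lem:dual_3moment}, whose decomposition has no Laplacian. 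So the Laplacian never acts on $Y^{(n)}$ itself, and no smoothing limit arises. You mollify by a fixed $P_\epsilon$ instead. This lets you use \eqref{eq:MP} only with time-independent test functions and the differential form of the dual equation, and it makes the $\Delta$-terms finite and exactly cancelling. The price is a limit $\epsilon\to0$, which your sketch justifies with the wrong tools.

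Three points need tightening.
\begin{enumerate}
\item \textbf{Quadratic terms.} After mollification they no longer cancel: $\partial_1F_\epsilon-\partial_2F_\epsilon$ keeps $\tfrac12X_s\big((P_\epsilon Y_r)^2-P_\epsilon(Y_r^2)\big)$. This is harmless, but it must be carried along and sent to $0$.
\item \textbf{Jump term.} It becomes $n\bE_{\xi^{(n)}}[\exp(-X_s(P_\epsilon(\xi^{(n)}Y_r))/\sqrt n)-1]$. Since $\xi^{(n)}$ need not be continuous (only boundedness of $\cC$ is assumed) and $X_s$ need not be absolutely continuous, the convergence $X_s(P_\epsilon(\xi^{(n)}Y_r))\to X_s(\xi^{(n)}Y_r)$ does not follow from continuity of $t\mapsto X_t$ in $\cM_\rho(\mR^d)$. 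It does hold, for two reasons:
\begin{itemize}
\item $P_\epsilon g\to g$ off a Lebesgue-null set $A$, by Lebesgue differentiation.
\item For $s>0$, $X_s(A)=0$ a.s. This uses independence of $X$ from $(\xi^{(n)},Y^{(n)})$ and $\bE X_s(A)=\langle P_s\1_A,\mu\rangle=0$, by the first moment formula, which follows directly from \eqref{eq:MP}.
\end{itemize}
\item \textbf{Dominating function.} Dominated convergence needs an almost-sure dominating function, and the pointwise moments of Lemma~\ref{lem:dual_3moment} do not provide one. Use instead
\[
0\le Y^{(n)}_r\le 2^{N_r}P_r\phi\le C2^{N_r}\phi_\rho ,
\]
where $N_r$ is the number of jumps up to time $r$. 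This holds because each jump multiplies by $1+\xi^{(n)}/\sqrt n\in[0,2]$ and $-\tfrac12 Y^2\le 0$.
\end{enumerate}

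With the prefactor $\e^{-X_s(Y_r)}$, the $\cC$-term is bounded by a multiple of $\|\cC\|_\infty$. The jump term is bounded by $n$, since $Y_r(1+\xi^{(n)}/\sqrt n)\ge0$. So for fixed $n$ your Taylor-expansion estimate is unnecessary.
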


\begin{theorem}\label{T:A.4}
For any initial measure $\mu\in \cM_\rho(\mR^d)$, the martingale problem \eqref{eq:MP} has a unique solution.
\end{theorem}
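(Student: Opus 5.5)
We prove uniqueness of one-dimensional marginals by the duality identity of Lemma \ref{lem:dual_identity} together with an $n\to\infty$ limit. We then upgrade to uniqueness of finite-dimensional distributions by the standard argument of \cite[Theorem 4.4.2]{EK} type: a well-posed one-dimensional marginal problem for every initial law implies uniqueness of the whole law. Existence is already provided by Proposition \ref{prop:CMP} and \cite[Lemma 2.3]{MX2007local}, so only uniqueness needs an argument.

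Fix a solution $X$ of \eqref{eq:MP} with $X_0=\mu\in\cM_\rho(\mR^d)$, let $0\le\phi\in\Phi_\rho$, and take $Y^{(n)}$ independent of $X$. We start from \eqref{eq:dual_identity}. The aim is to show that the right-hand side tends to $0$ as $n\to\infty$, uniformly for $t\in[0,T]$.

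We expand $n\bE_{\xi^{(n)}}[\e^{-z}-1]$ with $z=X_s(\xi^{(n)}Y^{(n)}_{t-s})/\sqrt n$ to second order:
\[
\e^{-z}-1=-z+\tfrac12 z^2+O(|z|^3).
\]
The linear term vanishes because $\bE\xi^{(n)}=0$. The quadratic term gives $\frac12\<\cC_n\cdot Y\otimes Y, X_s\otimes X_s\>$, which cancels the $\cC$-term up to the error
\[
\tfrac12\<(\cC-\cC_n)\, Y\otimes Y, X_s\otimes X_s\>.
\]
The cubic remainder is bounded by
\[
Cn^{-1/2}\,\bE_{\xi^{(n)}}|\xi^{(n)}|^3\, X_s(Y^{(n)}_{t-s})^3.
\]
Because $\e^{-X_s(Y)}\le1$ and $X$ and $Y^{(n)}$ are independent, each error is controlled by mixed moments. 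For the cubic term we use Lemma \ref{lem:dual_3moment} together with H\"older's inequality:
\[
\bE\, X_s(Y)^3\le \bE\int\!\!\int\!\!\int \bE[Y(x)Y(y)Y(z)]\,X_s(\d x)X_s(\d y)X_s(\d z)\le C\,\bE\<\phi_\rho,X_s\>^3,
\]
which is finite by Lemma \ref{lem:moment-bound}. Hence the cubic term is $O(n^{-1/2})$. For the covariance error, $\cC_n\to\cC$ pointwise and boundedly by dominated convergence, because $\sup_x\bE|\xi(x)|^3<\infty$ and $\xi^{(n)}=\xi$ outside $\{|\xi|>\sqrt n\}$. Combined with the bound $\bE[Y(x)Y(y)]\le C\phi_\rho(x)\phi_\rho(y)$ and $\bE\<\phi_\rho,X_s\>^2<\infty$, dominated convergence sends this term to $0$.

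It follows that $\bE\e^{-\<\phi,X_t\>}=\lim_n\bE\e^{-\<Y^{(n)}_t,\mu\>}$, and the right-hand side does not depend on which solution $X$ we chose. Hence the law of $\<\phi,X_t\>$ is determined for every $\phi\in\Phi_\rho^+\supset C_c^+$. Laplace functionals on $C_c^+$ determine laws on $\cM(\mR^d)$, so the one-dimensional marginal is unique. The same reasoning applies to any initial law supported on $\cM_\rho$ with the needed moments, in particular to the laws of $X_s$, which satisfy the moment bounds of Lemma \ref{lem:moment-bound}. Via the Markov/regular conditional distribution argument this yields uniqueness of all finite-dimensional distributions. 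Path continuity then identifies the law on path space.

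The main obstacle is the integrability justification for the limit. We must make sure that the cubic bound indeed only requires $\bE[Y(x)Y(y)Y(z)]\le C\phi_\rho(x)\phi_\rho(y)\phi_\rho(z)$; this follows from the third-moment bound in Lemma \ref{lem:dual_3moment} by H\"older's inequality. We must also check uniform integrability in $s\in[0,t]$. A further subtlety is applying the conditioning step to the initial laws $\cL(X_s)$, which are random rather than deterministic measures. I would handle this by first proving the duality identity with random initial state, conditioning on $X_0$ and using Lemma \ref{lem:moment-bound} for $\bE\<\phi_\rho,X_0\>^3<\infty$.
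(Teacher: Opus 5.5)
Your proposal is correct and follows essentially the same route as the paper. Both start from the duality identity of Lemma \ref{lem:dual_identity}, reduce its right-hand side to the covariance error $\frac12\langle(\cC-\cC_n)\,Y^{(n)}_{t-s}\otimes Y^{(n)}_{t-s},X_s\otimes X_s\rangle$ plus an $O(n^{-1/2})$ cubic term, and send both to zero using Lemma \ref{lem:dual_3moment} and Lemma \ref{lem:moment-bound}; the only difference is that you spell out the Ethier--Kurtz passage from one-dimensional marginals to the full law, which the paper leaves to Mytnik.

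One small repair is needed. Since $z=X_s(\xi^{(n)}Y^{(n)}_{t-s})/\sqrt n$ can be negative, the bound $\e^{-X_s(Y^{(n)}_{t-s})}\le 1$ alone does not control the Taylor remainder $\frac{|z|^3}{6}\e^{-\theta z}$. However, $\e^{-X_s(Y^{(n)}_{t-s})-\theta z}=\e^{-X_s((1+\theta\xi^{(n)}/\sqrt n)Y^{(n)}_{t-s})}\le 1$ because $\xi^{(n)}\ge-\sqrt n$, and this yields exactly the cubic bound you state.
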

\begin{proof}
	Let $X$ be any solution to \eqref{eq:MP} with initial measure $\mu\in \cM_\rho(\mR^d)$. Following \cite{Myt1996environments}, one only need to show that for any $\phi\in C_c^+(\mR^d)$,
	$$
	\lim_{n\to\infty} \bE\left[\exp(-X_0(Y_t^{(n)}))\right] = \bE\left[\exp(-\<\phi, X_t\>)\right].
	$$
	By equation (4.18) in \cite{Myt1996environments}, we have
	\begin{align*}
		&\left|\bE\left[\exp(-\<\phi, X_t\>)\right] - \bE\left[\exp(-X_0(Y_t^{(n)}))\right]\right| \\
		\leq& \frac{1}{2}\int_0^t \bE\left[ \int_{\mR^d \times \mR^d}
		|\cC(x,y)-\cC_n(x,y)| Y_{t-s}^{(n)}(x)Y_{t-s}^{(n)}(y) X_s(\mathrm{d}x)X_s(\mathrm{d}y) \right] \mathrm{d}s \\
		&+ \frac{1}{6\sqrt{n}}\int_0^t \left| \bE\left[ \bE_{\xi^{(n)}} \left[X_s\left(\xi^{(n)}Y_{t-s}^{(n)}\right)^3 \right] \right]\right| \mathrm{d}s =: \cE_n^{(1)} + \cE_n^{(2)}.
	\end{align*}
We use $\bE_X$ and $\bE_Y$ to denote the expectation with respect to $X$ and $Y$, respectively.
	Using Lemma \ref{lem:dual_3moment}, the dominated convergence theorem and the fact that $\cC_n$ converges to $\cC$, we get
	\begin{align*}
		\cE_n^{(1)} \leq C \int_0^t \bE_X\left[ \int_{\mR^d \times \mR^d} |\cC(x,y)-\cC_n(x,y)| \phi_\rho(x)\phi_\rho(y) X_s\otimes X_s (\d x \times \d y)\right] \mathrm{d}s \to 0.
	\end{align*}
	For \(\cE_{n}^{(2)}\), using Minkowski's inequality,
we have
	\begin{align*}
		\cE_n^{(2)}\leq& \frac{1}{6\sqrt{n}} \int_0^t \bE \left[ X_s\left( \left\{ \bE_{\xi^{(n)}} \left[ |\xi^{(n)}|^3(Y^{(n)}_{t-s})^3\right] \right\}^{\frac{1}{3}} \right) \right]^{3} \d s \\
		\leq & \frac{1}{6\sqrt{n}} \int_0^t \bE [X_s(Y_{t-s}^{(n)})]^3 \d s\leq \frac{1}{6\sqrt{n}} \int_0^t \bE_X \left[ X_s \left( \left[ \bE_Y  (Y^{(n)}_{t-s})^3 \right]^{\frac{1}{3}} \right) \right]^3 \d s\\
		\leq & Cn^{-\frac{1}{2}} \int_0^t \bE \left[ X_s(\phi_\rho)\right]^3 \d s \to 0, \quad \mbox{as } n\to\infty,
	\end{align*}
where in the second inequality we used Lemma \ref{lem:moment-bound}, and in the last inequality we used Lemma \ref{lem:dual_3moment}.
	So we conclude that $\lim_{n\to\infty} \bE\left[\exp(-X_0(Y_t^{(n)}))\right] = \bE\left[\exp(-\<\phi, X_t\>)\right]$, which completes the proof of uniqueness.
\end{proof}

\section{Positivity and comparison principle for semilinear SPDEs}\label{Sec:App-SPDE}
\setcounter{equation}{0}
\renewcommand\theequation{B.\arabic{equation}}

In this section, we present the positivity and comparison principle for semilinear SPDEs driven by space-time Gaussian noise. The results are standard, but we give
some key proofs for completeness.

  Let \( (\Omega, \cF, \{\cF_t\}_{t\geq 0}, \bP) \) be a fixed filtered probability space satisfying the usual conditions,
 and let $W$ be
  a Gaussian field with zero mean that is white in time and colored in space with
  $$
    \bE   \left[ W(s,x)W(t,y) \right]=\cC (x,y)(s\wedge t) \mbox{ and } \|\cC\|_\infty<\infty.
  $$

  The stochastic integration $\int^t_0\int_{\mR^d} f(s, x)W(\d s, x)X_s(\d x)$ is well defined for any predictable measurable random field $f$ and measure $X$ satisfying
  $$
  \bE \int_0^t\d s \int_{\mR^d\times \mR^d} f(s,x)  \cC(x,y) f(s, y) X_s(\d x) X_s(\d y)<\infty,
  $$
  and
  \begin{eqnarray}\label{eq:Ito}
  	\begin{aligned}
  		&\bE  \left[ \left(\int^t_0\!\!\int_{\mR^d} f(s, x)W(\d s, x) X_s(\d x) \right)\, \left(\int^t_0 \int_{\mR^d}  g(s, x)W(\d s, x) X_s(\d x) \right) \right]  \\
  		=& \bE \int_0^t\d s \int_{\mR^d\times \mR^d} f(s,x)  \cC(x,y) g(s,y)X_s(\d x) X_s(\d y),
  	\end{aligned}
  \end{eqnarray}
  due to It\^o  isometry.

  Let \(T>0\). Consider the following stochastic partial differential equation on $[0,T] \times \mR^d$:
  \begin{equation}\label{eq:spde1}
  	\begin{cases}
  		\displaystyle
  		\partial_t u(t,x)=\frac{1}{2}\Delta u(t,x)+b(t,x,u(t,x))+ k(t,x)+\sigma(t,x, u(t,x)) \p_t W(t,x)\\
  		u(0,x)= f(x)\in L^2.
  	\end{cases}
  \end{equation}
An \(\cF_t\)-progressively measurable random field \(u\) whose trajectories belong to \(C([0,T]; L^2) \cap L^2([0,T]; H^1)\) \(\bP\)-a.s. is called a probabilistically strong, analytically weak solution to \eqref{eq:spde1} if it satisfies the following conditions:
\begin{enumerate}
	\item For every test function \(\phi \in C_c^\infty(\mR^d)\) and \(t \in [0,T]\),
    \[
        \int_0^t \int_{\mR^d} \left| b(s, x, u(s,x)) + k(s,x) \right| |\phi(x)| \, \d x \, \d s < \infty, \quad \bP\text{-a.s.}
    \]
    and
    \[
        \int_0^t \int_{\mR^d} \left| \sigma(s, x, u(s,x)) \phi(x) \right|^2 \, \d x \, \d s < \infty, \quad \bP\text{-a.s.}
    \]
	\item For every test function \(\varphi \in C_c^\infty(\mR^d)\), the following equality holds \(\bP\)-almost surely for all \(t \in [0,T]\)
\begin{equation*}
    (u(t),\varphi)-(f,\varphi) = \frac{1}{2}\int_0^t (u(s),  \Delta \varphi) \d s +\int_0^t (b(u(s)), \varphi) \d s+\int_0^t ( \sigma(u(s)) W(\d s), \varphi).
\end{equation*}
\end{enumerate}

\begin{assumption}\label{aspt:1}
  	Let \(T>0\). Suppose  $b,\sigma: [0,T]\times \mR^d\times \mR\times \Omega\to \mR$ and $k:  [0,T] \times \mR^d\times \Omega\to \mR$ are predictable satisfying  the following conditions:
  	\begin{enumerate}
  		\item  for any $t\in [0,T]$, $x\in\mR^d$ and $r_1, r_2\in \mR$,
  		\begin{align*}
  			(b(t,x,r_1)-b(t,x,r_2))(r_1-r_2)+|\sigma(t,x,r_1)-\sigma(t,x,r_2)|^2 \leq \lambda |r_1-r_2|^2 ,
  		\end{align*}
  		
  		\item for any fixed $t\in [0,T]$ and $x\in\mR^d$,   $r\rightarrow b(t,x,r)$ is a continuous function on $\mR$, and
  		\begin{align*}
  			rb(t,x,r)+|\sigma (t,x,r)|^2\leq \lambda(1+r^2),
  		\end{align*}
  		
  		\item $\bE\int_0^T \|k(t)\|_2^2 \d t <\infty$.
  	\end{enumerate}
  \end{assumption}

  \begin{lemma}\label{lem:spde}
	Let \(T>0\). Under {\bf Assumption} \ref{aspt:1},
	the following hold true.
	\begin{enumerate}[(i)]
		\item  Equation \eqref{eq:spde1} has a unique solution $u \in   L^2(\Omega; C([0,T]; L^2))\bigcap L^2([0,T]\times \Omega; H^1)$, and
		\begin{equation*}
			\bE  \Big[  \sup_{0\leq t\leq T}\|u(t)\|_2^2 \Big] +\bE  \int_0^T\|u(t)\|_{H^1}^2dt\leq C \left(1+\|f\|_2^2+\bE  \int_0^T \|k(t)\|_2^2 \d t\right),
		\end{equation*}
		where $C$ only depends on $\lambda$ and $T$.
	
		\item Assume that $b$ and $\sigma$ in additionally satisfy
		\begin{align}\label{eq:Lgrowth}
			rb(t,x,r)+|\sigma (t,x,r)|^2\leq \lambda r^2,
		\end{align}
		and that $f\in L^p$ and $\bE\int_0^T \|k(t)\|_p^p \d t <\infty$ for some $p\geq 2$. Then
		\begin{equation}\label{eq:u-Lp}
			\bE  \Big[  \sup_{0\leq t\leq T}\|u(t)\|_p^p \Big] +\bE  \int_0^T\|\nabla u(t)\|_{p}^p \d t\leq C \left(\|f\|_p^p+\bE  \int_0^T \|k(t)\|_p^p \d t\right).
		\end{equation}
		
		\item Assume in additionally that $b(t,x,0)\geq 0$ and $\sigma(t,x,0) =0$, and that $f\geq 0$ and $k\geq 0$. Then
		\begin{equation}\label{positive-eta}
			\bP \l( \hbox{for a.e. } x\in \mR^d,  ~ u(t,x)\geq 0 \hbox{ for every  } t\in [0,T] \r)  =1.
		\end{equation}
	\end{enumerate}
  \end{lemma}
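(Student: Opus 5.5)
We will prove the three parts of Lemma \ref{lem:spde} in order by the standard variational (monotone operator) approach of Krylov--Rozovskii, working on the Gelfand triple $H^1\subset L^2\subset H^{-1}$.

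\emph{Part (i).} We first treat the case where $b$ and $\sigma$ are globally Lipschitz in $r$. There we construct the solution by Galerkin approximation, or equivalently by a Picard iteration in the mild formulation. Itô's formula for $\|u(t)\|_2^2$ then gives
\begin{equation*}
\d\|u\|_2^2=\Big(-\|\nabla u\|_2^2+2(b(u)+k,u)+\int_{\mR^{2d}}\sigma(u)(x)\cC(x,y)\sigma(u)(y)\,\d x\,\d y\Big)\d t+\d M_t .
\end{equation*}
The only point to check is the noise term. We bound it by $\|\cC\|_\infty\big(\int_{\mR^d}|\sigma(u)|\,\d x\big)^2$, which is not controlled by $L^2$ norms. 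We therefore read the coercivity hypotheses in Assumption \ref{aspt:1} as bounds on the full noise energy. In the cases used in the paper ($\sigma(u)=u$ or a localized version of it), this is how the integrals are controlled. The a priori estimate then follows from (2) together with the BDG and Gronwall inequalities.

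Uniqueness follows from the monotonicity condition (1): apply Itô's formula to $\|u_1-u_2\|_2^2$ and then Gronwall's inequality. Continuity of $b$ in $r$ (hemicontinuity), combined with the monotonicity and coercivity conditions, allows us to invoke the Krylov--Rozovskii theorem directly for general $b$, which yields existence.

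\emph{Part (ii).} We apply Itô's formula to $\|u\|_p^p=\int|u|^p\,\d x$, first on smooth approximations. The Laplacian produces the term
\begin{equation*}
-\frac{p(p-1)}{2}\int|u|^{p-2}|\nabla u|^2\,\d x .
\end{equation*}
The drift and the Itô correction together give $p\int|u|^{p-2}\big(ub(u)+\tfrac{p-1}{2}\sigma(u)^2\big)$. By \eqref{eq:Lgrowth} this is at most $C\|u\|_p^p$, since the absence of a constant term is exactly what \eqref{eq:Lgrowth} provides. Young's inequality handles $|u|^{p-1}k\le |u|^p+|k|^p$. The supremum in time is then controlled by BDG and Gronwall. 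The gradient term as stated, $\|\nabla u\|_p^p$, is stronger than what this computation gives, which is $\int|u|^{p-2}|\nabla u|^2$. We expect to recover it through maximal $L^p$-regularity of the heat semigroup (Krylov's $L^p$ theory) applied to the mild form, using the already obtained $L^p$ bound on $u$. This regularity step is the main technical obstacle.

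\emph{Part (iii).} We apply Itô's formula to $\|u^-\|_2^2$, using a smooth convex approximation $\psi_\eps(r)$ of $(r^-)^2$ and letting $\eps\to0$. The gradient term is nonpositive. The remaining terms are handled as follows:
\begin{itemize}
\item Since $b(t,x,0)\ge0$, condition (1) gives $b(u)u^-\ge -\lambda (u^-)^2$ on $\{u<0\}$.
\item Since $\sigma(\cdot,0)=0$, condition (1) gives $|\sigma(u)|^2\le\lambda|u|^2$ wherever $u<0$.
\item Since $k\ge0$, the forcing term contributes $-2(k,u^-)\le0$.
\end{itemize}
Hence $\bE\|u^-(t)\|_2^2\le C\int_0^t\bE\|u^-(s)\|_2^2\,\d s$, with $u^-(0)=f^-=0$. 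Gronwall's inequality gives $u^-\equiv0$. Path continuity in $L^2$ then upgrades this to the statement holding a.s. simultaneously for all $t$, which yields \eqref{positive-eta}.
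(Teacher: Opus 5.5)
Your overall route is the paper's: variational theory as in Pardoux for (i), It\^o's formula for $\|u\|_p^p$ for (ii), and It\^o's formula for a convex regularization of $(u^-)^2$ for (iii). However, the step you defer in (ii) cannot be carried out, because the gradient part of \eqref{eq:u-Lp} is false for $p>2$. Take $b=\sigma=k=0$, so all hypotheses hold and $u(t)=P_tf$. Fix $0\neq f\in C_c^\infty(\mR^d)$ and put $f_\eps:=f(\cdot/\eps)$. Since $P_tf_\eps(x)=(P_{t/\eps^2}f)(x/\eps)$, for $\eps\in(0,1]$,
\[
\int_0^T\|\nabla P_tf_\eps\|_p^p\,\d t=\eps^{d+2-p}\int_0^{T/\eps^2}\|\nabla P_sf\|_p^p\,\d s\ \geq\ c_f\,\eps^{2-p}\,\|f_\eps\|_p^p,
\qquad c_f:=\|f\|_p^{-p}\int_0^T\|\nabla P_sf\|_p^p\,\d s>0,
\]
so no constant $C$ can work. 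This is also why maximal $L^p$-regularity will not rescue it: $\int_0^T\|\nabla P_tf\|_p^p\,\d t<\infty$ essentially characterizes $f\in B^{1-2/p}_{p,p}$, i.e.\ $f$ having $1-2/p$ derivatives in $L^p$, not $f\in L^p$. What It\^o's formula actually gives on the left-hand side is $\bE\int_0^T\!\int|u|^{p-2}|\nabla u|^2\,\d x\,\d t$, as you noticed. The paper's proof reads $\|\nabla u\|_p^p$ off that same identity, so the paper has the same error. The correct fix is to weaken the conclusion to this weighted gradient term; for $p>2$ only the bound on $\|u(t)\|_p^p$ is used later, in Lemma \ref{lem:Evp}. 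Note also that \eqref{eq:Lgrowth} controls $rb+\sigma^2$ only jointly, whereas the It\^o term is $ub(u)+\frac{p-1}{2}\cC(x,x)\sigma(u)^2$. Bounding it by $C|u|^2$ needs $\sigma(r)^2\leq Cr^2$ separately, which holds for $\sigma(r)=r$, the case the paper uses.

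In (i) your It\^o formula is wrong, and the ``difficulty'' you then resolve by reinterpreting Assumption \ref{aspt:1} does not exist; reinterpreting it would change the lemma. The double integral $\int\!\!\int\sigma(u)(x)\cC(x,y)\sigma(u)(y)\,\d x\,\d y$ is the quadratic-variation rate of the single scalar martingale $\int_0^t(\sigma(u(s))W(\d s),1)$. The correction in $\d\|u\|_2^2$ is instead a trace: the sum, over an orthonormal basis $\{\varphi_j\}$ of $L^2$, of the quadratic-variation rates of $(\sigma(u)W(\d s),\varphi_j)$. This equals $\int\cC(x,x)\sigma(u(x))^2\,\d x\leq\|\cC\|_\infty\|\sigma(u)\|_2^2$. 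This diagonal term is what the paper writes in (ii) and (iii), and what you use yourself there. With it, the energy estimate and the uniqueness argument go through as you describe. Strictly, Assumption \ref{aspt:1} should carry the weight $\|\cC\|_\infty$ in front of $|\sigma|^2$, a normalization the paper makes tacitly. Invoking Krylov--Rozovskii ``directly for general $b$'' also needs a growth bound on $u\mapsto\frac12\Delta u+b(u)$ from $V$ to $V^*$, which Assumption \ref{aspt:1} does not contain. For superlinear $b$ one needs $V=H^1\cap L^q$ together with polynomial growth, or an approximation of $b$; the paper hides the same point behind its citation.

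In (iii) your direct Gronwall argument is fine in spirit, but your second bullet does not follow from (1). With $r_2=0$, condition (1) only gives $(b(u)-b(0))u+\sigma(u)^2\leq\lambda u^2$, so the drift and the It\^o correction must be estimated together. The paper avoids this by solving the auxiliary equation with coefficients $b(\bar u^+)$ and $\sigma(\bar u^+)$. On $\{\bar u<0\}$ these equal $b(0)\geq0$ and $0$, so $\bE\,\Phi_\eps(\bar u(t))\leq0$ follows at once, without Gronwall, and uniqueness from (i) then gives $\bar u=u$.
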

  \begin{proof}
  	(i). The proof for (i) is standard, we refer the reader to \cite[Theorem 2.13 and Theorem 2.24]{Par2021stochastic}.
  	
  	\smallskip
  	
  	(ii). Suppose $u\in L^p(\Omega; C([0,T]; L^p))\bigcap L^p([0,T]\times \Omega; W^{1,p})$ and $u$ is a solution to \eqref{eq:spde1}, then using integration by parts and It\^o's formula,
  	\begin{equation*}
  		\begin{aligned}
  		  \d_t \int_{\mR^d} |u(t,x)|^p \d x = &- p(p-1)\left( \int_{\mR^d} |u(t,x)|^{p-2} |\nabla u(t,x)|^2 \d x\right) \d t\\
		  & + p\left(\int_{\mR^d} |u(t,x)|^{p-2} u(t,x)[ b(t,x,u(t))+k(t,x)] \d x \right)  \d t\\
  		  &+ \frac{p(p-1)}{2} \left(\int_{\mR^d} |u(t,x)|^{p-2} \sigma^2(t,x,u(t,x)) \cC(x,x) \d x\right) \d t\\
  		  &+ p \int_{\mR^d} |u(t,x)|^{p-2} u(t,x) \sigma(t,x,u(t,x)) W(\d t,\d x ).
  	\end{aligned}
  	\end{equation*}
   This together with \eqref{eq:Lgrowth}, H\"older inequality and Gronwall's inequality yields
  	\begin{equation*}
  		\sup_{t\in [0,T]} \bE \|u(t))\|_{p}^p + \bE \int_0^T \|\nabla u(t)\|_{p}^p \d t \leq C \|f\|_p^p+ C \bE\int_0^T \|k(t)\|_p^p \d t.
  	\end{equation*}
  	This can be refined to
	 satisfy
	 \eqref{eq:u-Lp}, due to  Burkholder-Davis-Gundy inequality (see the proof of \cite[Lemma 2.17]{Par2021stochastic}).
  	
   	(iii). The proof for (iii) is almost the same as that for  \cite[Theorem 2.24]{Par2021stochastic}. We give a proof here for
   	the sake of completeness.
   	
   	Consider the following  SPDE:
   	$$
   	  \partial_t \bar{u}=\frac{1}{2} \Delta \bar{u}+b(\bar{u}^+)+k+\sigma(\bar{u}^+) \p_t W
   	$$
   	with the initial data $\bar u(0)=f\geq 0$. We claim that
   	\begin{equation}\label{eq:positive}
   		\bP \l(  \hbox{for a.e. } x\in \mR^d ,  \,  \bar u(t,x)\geq 0 \hbox{ for every  } t>0 \r)=1.
   	\end{equation}
   	Assuming this, then by the uniqueness result (i), we have  $u=\bar{u}$, which gives
   	\eqref{positive-eta}. Thus  we are left to show \eqref{eq:positive}.

   	Let $\phi_\eps\in C^2(\mR)$ be a
	convex regularization of $x^2\1_{\{x<0\}}$  defined by
   	\[
   	\phi_\eps(x)=
   	\begin{cases}
   		0 & \mbox{ if }x\geq 0,\\
   		x^2-\eps^2/6 &\mbox{ if } x\leq -\eps,\\
   		-\eps^{-1}x^3 \left( (2\eps)^{-1}x+4/3 \right)  &\mbox{ if } -\eps< x< 0.
   	\end{cases}
   	\]
   	Put
   	$$
   	  \Phi_\eps(u)=\int_{\mR^d}\phi_\eps(u(x))\d x.
   	$$
   	By the generalized Ito's formula (see \cite[Lemma 2.15]{Par2021stochastic}), we have
   	\begin{equation}\label{4}
   		\begin{split}
   			\Phi_\eps(\bar{u}(t))=&-\frac{1}{2}\int_0^t\l( \nabla\bar{u}(s), \phi''_\eps(\bar u(s))\nabla \bar u(s)\r) \d s +  \int_0^t\left(b(\bar{u}^+(s))+k(s),\phi_\eps'(\bar{u}(s))\right) \d s\\
   			 &+\frac{1}{2}\int_0^t\int_{\mathbb{R}^d}\phi_\eps''(\bar{u}(s,x))\sigma(\bar{u}^+(s,x))^2\cC (x,x)\d x\d s\\
   			 &+\int_0^t\int_{\mR^d}\phi_\eps'(\bar{u}(s,x))\sigma(\bar{u}^+(s,x))W(\d s, x)\d x.
   		\end{split}
   	\end{equation}
   	Since $\phi_\eps'\leq 0$, $\phi_\eps''\geq 0$ and $k, f\geq 0$, we have
   	$$
   	  -\l( \nabla \bar{u}(s), \phi_\eps''(\bar{u}(s))\nabla \bar{u}(s) \r)\leq 0, \quad
   	  (k(s),\phi_\eps'(\bar{u}(s)))\leq0, \quad  (b(\bar{u}^+(s)),\phi_\eps'(\bar{u}(s)))\leq 0,
   	$$
   	and
   	$\phi_\eps'(\bar u(s))\sigma(\bar{u}^+(s))=0$ and  $\phi_\eps''(u(s))\sigma(\bar{u}^+(s))=0$. Taking expectation on both sides of (\ref{4}), we obtain that $\bE  \Phi_\eps(\bar{u}_t) \leq 0$. So
   	$$
   	  \bP (\Phi_\eps(\bar{u}_t)=0)=1, ~\mbox{ for each }t>0.
   	$$
   	Since $\Phi_\eps(\bar{u}_t)$ is continuous in $t$, in fact, we have
   	$$
   	  \bP (\Phi_\eps(\bar{u}_t)=0 \hbox{ for every }  t>0)=1.
   	$$
    This implies \eqref{eq:positive}.
   \end{proof}

  Now suppose $v$ satisfies a slightly stochastic partial different equation
  $$
    \partial_t v=\frac{1}{2}\Delta v +\widetilde{b}(v)+ \widetilde{k}+{\sigma}(v) \p_t W, \quad v(0)=\widetilde{f}.
  $$
 Adapting the proof of \cite[Theorem 2.26]{Par2021stochastic} and using arguments similar to Lemma \ref{lem:spde} (iii), we establish the following comparison theorem.

  \begin{lemma}\label{lem:comparion}
  	Assume that $f\leq \widetilde{f}$, $b\leq \widetilde{b}$ and $k\leq \widetilde{k}$, and that one of the pairs $(b, \sigma, k)$ and $(\widetilde{b}, \sigma, \widetilde{k})$ satisfies {\bf Assumption} \ref{aspt:1}. Then $u(t,x)\leq v(t,x)$ $x$ a.e., $\bP$-a.s., for all $t\geq 0$.
  \end{lemma}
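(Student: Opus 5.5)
The plan is to compare $u$ and $v$ by studying their difference, in the same way Lemma \ref{lem:spde}(iii) proves positivity. Suppose first that the triple $(b,\sigma,k)$ satisfies Assumption \ref{aspt:1}. Put $w:=u-v$. Then $w$ solves
$$
\partial_t w=\tfrac12\Delta w+\big[b(u)-\widetilde b(v)\big]+\big[k-\widetilde k\big]+\big[\sigma(u)-\sigma(v)\big]\p_t W,\qquad w(0)=f-\widetilde f\le 0 .
$$
The drift splits as $b(u)-\widetilde b(v)=[b(u)-b(v)]+[b(v)-\widetilde b(v)]$. The second bracket is $\le 0$, and so is $k-\widetilde k$. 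We want to show that $w^+\equiv 0$, and we will do this with Itô's formula applied to the functional $\Phi_\eps(-w)$.

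Concretely, let $\phi_\eps$ be the convex regularization used in Lemma \ref{lem:spde}(iii), and set $\psi_\eps(r):=\phi_\eps(-r)$. This is a $C^2$ approximation of $(r^+)^2$ with $\psi_\eps'\ge0$, $\psi_\eps''\ge0$, and $\psi_\eps'=\psi''_\eps=0$ on $(-\infty,0]$. Applying the generalized Itô formula \cite[Lemma 2.15]{Par2021stochastic} to $\int\psi_\eps(w(t,x))\d x$ gives four terms:
\begin{itemize}
\item a gradient term $-\frac12\int_0^t(\nabla w,\psi_\eps''(w)\nabla w)\d s\le0$;
\item a drift term $\int_0^t(\psi_\eps'(w),b(u)-b(v))\d s$, plus the terms $(\psi'_\eps(w),b(v)-\widetilde b(v)+k-\widetilde k)\le0$, which we drop;
\item an Itô correction $\frac12\int_0^t\!\int\psi_\eps''(w)|\sigma(u)-\sigma(v)|^2\cC(x,x)\d x\d s$;
\item a stochastic integral, which has zero mean after localization by stopping times.
\end{itemize}

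The key step is to bound the sum of the drift and correction terms by $C\int_0^t\int (w^+)^2\d x\d s$, using the monotonicity condition (1) of Assumption \ref{aspt:1}. On $\{w>0\}$ we have $\psi_\eps'(w)\le 2w$ and $\psi_\eps''(w)\le C$. Since $\psi'_\eps(w)/w\in[0,2]$, we can write $\psi_\eps'(w)(b(u)-b(v))=(\psi'_\eps(w)/w)\,(b(u)-b(v))(u-v)$, which is at most $(\psi'_\eps(w)/w)\,(\lambda|w|^2-|\sigma(u)-\sigma(v)|^2)$. The Itô correction is bounded by $C\|\cC\|_\infty|\sigma(u)-\sigma(v)|^2\1_{\{w>0\}}$. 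To avoid having to match constants between these two estimates, I would instead pass to the limit $\eps\to0$ first. In the limit the Itô formula holds for $\|w^+\|_2^2$ itself, with $\psi'=2w^+$ and $\psi''=2\1_{\{w>0\}}$. Then the drift and correction terms combine into $2\,(b(u)-b(v))w^+ +\1_{\{w>0\}}|\sigma(u)-\sigma(v)|^2\,\cC(x,x)$. After rescaling the noise so that $\cC(x,x)\le1$ (absorbing $\|\cC\|_\infty$ into $\sigma$ and $\lambda$), this is $\le C|w^+|^2$ by condition (1). Taking expectations gives $\bE\|w^+(t)\|_2^2\le C\int_0^t\bE\|w^+(s)\|_2^2\d s$, and Gronwall's inequality yields $w^+(t)=0$ a.e. for each $t$. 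Continuity of $t\mapsto w(t)$ in $L^2$ then upgrades this to all $t$ simultaneously, $\bP$-a.s.

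If instead the triple $(\widetilde b,\sigma,\widetilde k)$ satisfies Assumption \ref{aspt:1}, we use the other splitting $b(u)-\widetilde b(v)=[b(u)-\widetilde b(u)]+[\widetilde b(u)-\widetilde b(v)]$. The first bracket is $\le0$, and the monotonicity of $\widetilde b$ is then used in the same way.

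The main obstacle is justifying the Itô formula and taking expectations. The functions $u,v$ live only in $C([0,T];L^2)\cap L^2([0,T];H^1)$, and $b$ may be non-Lipschitz (for example $-u^2/2$). I would first localize with stopping times $\tau_N=\inf\{t:\|u(t)\|_2+\|v(t)\|_2\ge N\}$ so that the stochastic integral is a true martingale. Second, I would check that $\psi'_\eps(w)(b(u)-b(v))$ is integrable, which follows from condition (1) of the definition of solution. Finally, I would pass $\eps\to0$ by monotone/dominated convergence before applying Gronwall.
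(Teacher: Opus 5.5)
Your proof is the classical direct one: It\^o's formula for $\|(u-v)^+\|_2^2$, then condition (1) of Assumption~\ref{aspt:1} to absorb drift and It\^o correction, then Gronwall. It is sound in outline, but it is not the route the paper indicates.

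The paper only points to Pardoux's comparison theorem and to the positivity proof of Lemma~\ref{lem:spde}(iii), which avoids Gronwall entirely. There the coefficients are truncated at the positive part, $b(\bar u^+)$ and $\sigma(\bar u^+)$. Wherever the penalization $\phi_\eps$ is active, the noise coefficient is $\sigma(0)=0$ and the drift has a sign, so every term in It\^o's formula for $\Phi_\eps$ is nonpositive. Uniqueness from Lemma~\ref{lem:spde}(i) then identifies the truncated solution with the original one.

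The natural adaptation to comparison writes the equation for $z=v-u$ with coefficients $\widetilde b(u+z)-\widetilde b(u)$ and $\sigma(u+z)-\sigma(u)$. These vanish at $z=0$ and inherit (1), and the forcing $\widetilde b(u)-b(u)+\widetilde k-k$ is nonnegative. One then applies positivity. In the other case one uses $b(v)-b(v-z)$, $\sigma(v)-\sigma(v-z)$ and forcing $\widetilde b(v)-b(v)+\widetilde k-k$ instead.

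What each route buys:
\begin{itemize}
\item The paper's route recycles a proved lemma and uses (1) only through well-posedness.
\item Yours is self-contained and yields uniqueness as a by-product (take identical data). But it uses (1) quantitatively to absorb the It\^o correction, and that is exactly where your one faulty step sits.
\end{itemize}

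\textbf{The faulty step.} You cannot rescale the noise to get $\cC(x,x)\le 1$ ``absorbing $\|\cC\|_\infty$ into $\sigma$ and $\lambda$''. Rescaling replaces $\sigma$ by $\|\cC\|_\infty^{1/2}\sigma$, and (1) is not stable under $\sigma\mapsto c\sigma$ with $c>1$: the $\sigma$-term is controlled only through cancellation against the drift.

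For example, $b(r)=-2r^3$, $\sigma(r)=r^2$ satisfies (1) with $\lambda=0$. Yet $(b(r_1)-b(r_2))(r_1-r_2)+2|\sigma(r_1)-\sigma(r_2)|^2=2r_1r_2(r_1-r_2)^2$. So for $\cC\equiv 4$, your integrand $2(b(u)-b(v))w^+ +\1_{\{w>0\}}|\sigma(u)-\sigma(v)|^2\cC(x,x)$ equals $4uv\,|w^+|^2$ and is not bounded by $C|w^+|^2$.

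What your computation needs is one of the following:
\begin{itemize}
\item Condition (1) with the covariance built in, $(b(r_1)-b(r_2))(r_1-r_2)+\frac12\cC(x,x)|\sigma(r_1)-\sigma(r_2)|^2\le\lambda|r_1-r_2|^2$, as in Pardoux's trace-norm formulation. Literal (1) already suffices when $\|\cC\|_\infty\le 2$.
\item A Lipschitz $\sigma$, which covers every use in the paper ($\sigma(r)=r$).
\end{itemize}
The paper's Lemma~\ref{lem:spde}(ii) and the cited uniqueness rest on the same implicit normalization. So the fix is to drop the rescaling claim and state the hypothesis this way, not to change your method.

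\textbf{A smaller point.} The integrability requirement in the definition of solution only tests $b(u)$ against compactly supported functions. It therefore does not make $\psi_\eps'(w)(b(u)-b(v))$ integrable, contrary to what you assert. You only need an upper bound for that term. However, the validity of It\^o's formula for non-Lipschitz $b$ is an input that you, like the paper, take for granted.
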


  \begin{remark}\label{rmk:bdd-initial} \rm
      In the main body of this paper, for equation \eqref{eq:PAM} we also consider initial date $f\in C_b(\mR^d)$. The solution $v$ to \eqref{eq:PAM} in this context should be interpreted as follows: Suppose that  $C_c^+(\mR^d) \ni f_n^+ \uparrow f^+$ and $C_c^+(\mR^d) \ni f_n^- \uparrow f^-$. Let $v_n$ be the solutions to \eqref{eq:PAM} with $v_n(0)=f_n$.
      Using the linearity of \eqref{eq:PAM}, Lemma \ref{lem:comparion} and the estimates provided in Lemma \ref{lem:control-v}, one can see that $v_n$ convergence to a continuous random field $v$, which satisfies equation \eqref{eq:PAM} in  analytically weak and probabilistically strong sense. Moreover, the comparison principle also holds in this case.
  \end{remark}

  \section{Proof for Lemma \ref{lem:LDP}}\label{Sec:App-LDP}
  \setcounter{equation}{0}
  \renewcommand\theequation{C.\arabic{equation}}

  Recall that $W=\sqrt{a} \xi$.  We denote the expectation and probability with respect to the noise \(W\) (and \(\xi\)) by \(\bE\) and \(\bP\), respectively, whereas \(E\) and \(P\) are reserved for the Brownian motion \(B\), which is independent of \(W\) (and \(\xi\)). Put
  $$
    \eta_a(t,x)=\int_0^t \xi (\d s, x+\sqrt{1/a} B_{s}), \quad w_a (t,x) = E \l[ \exp (\eta_a(t,x) )\r].
  $$

  By the independence and stationarity of the increments of $\xi$ in time and reversing time in the\ stochastic integral in \eqref{eq:SFK}, one can see that
  \begin{equation}\label{eq:v=w}
      \begin{aligned}
    \widetilde{v}(t,\cdot)=& E\l[\exp \l( \int_0^t\sqrt{a}\xi(\d s,\cdot+B_{t-s})\r)\r]  \overset{d}{=} E\l[\exp\int_0^t \sqrt{a}\xi (\d s,\cdot+B_{s}) \r] \\
    \overset{d}{=}& E \l[\exp\int_0^t \xi({a}\d s,\cdot+B_{s})\r]
    \overset{d}{=} E\l[ \exp\int_0^{a t} \xi(\d u, \cdot+\sqrt{1/a}B_{u})\r]\\
    =&w_a(a t, \cdot) .
  \end{aligned}
  \end{equation}
  Thanks to \cite[Theorem 2]{CV1998almost}, there is a constant $N_0(d, \Theta)\geq 1$ such that for each $a>N_0$ and $x\in \mR^d$,
  \begin{equation}\label{eq:wa-Lyap}
      \lambda (a)=\lim_{t\to\infty} \frac{\log w_a (t,x)}{t}\leq \frac{C}{\log a}<\frac{1}{100}.
  \end{equation}

  \smallskip

  The following large deviation type result is proved by Cranston-Mountford \cite[Theorem 3.5]{CM2006lyapunov}.

  \const{\cE1}
  \begin{lemma}\label{lem:prob-wa}
    For $t$ sufficiently large, there is a constant $c_{\cE1}(d, a, \Theta)>0$ such that for each $x\in \mR^d$,
    \begin{equation}\label{eq:LDP1}
        \bP (w_a(t,x)>\e^{(\lambda(a)+\frac{1}{100})t}) \leq \e^{-c_{\cE1} t}.
    \end{equation}
  \end{lemma}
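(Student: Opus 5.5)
The plan is to derive \eqref{eq:LDP1} from Gaussian concentration of the free energy $\log w_a(t,x)$ around its mean, together with the identification of $\lambda(a)$ in \eqref{eq:wa-Lyap} as $\lim_{t\to\infty} t^{-1}\bE\log w_a(t,x)$. Since $\xi$ is spatially homogeneous, the law of $w_a(t,x)$ does not depend on $x$. So it suffices to treat $x=0$, and the constant obtained is automatically uniform in $x$.

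\emph{Step 1 (concentration).} I view $F:=\log w_a(t,0)=\log E[\exp(\eta_a(t,0))]$ as a functional of the Gaussian noise $\xi$, whose Cameron--Martin space carries the inner product induced by the covariance $\Theta(x-y)\,\delta(s-r)$. Perturbing $\xi$ in a Cameron--Martin direction $h$ changes $F$ by
\[
\langle \mathbf{E}^{\rm Gibbs}_{t}[\Xi_B],h\rangle,
\]
where $\Xi_B$ is the Cameron--Martin element representing the linear functional $\xi\mapsto\int_0^t\xi(\d s,\sqrt{1/a}B_s)$ for a fixed path $B$, and $\mathbf{E}^{\rm Gibbs}_t$ is the polymer (Gibbs) average over paths. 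Each $\Xi_B$ has norm squared equal to $\int_0^t\Theta(0)\,\d s=t$. By convexity of the norm, $F$ is therefore $\sqrt t$-Lipschitz in the Cameron--Martin sense. The Tsirelson--Ibragimov--Sudakov (Borell) inequality then gives
\[
\bP\bigl(|F-\bE F|>r\bigr)\le 2\e^{-r^2/(2t)}.
\]
To make this rigorous, I would first carry out the argument for a finite-dimensional approximation of $\xi$ (a time-discretized, spatially mollified noise). I would then pass to the limit using $L^2$ convergence of the stochastic integrals and of $w_a$; the moment bounds of Lemma \ref{lem:Evp} type guarantee $\bE|F|<\infty$.

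\emph{Step 2 (identifying the mean).} By the Markov property of $B$ and the independence of the time increments of $\xi$, the sequence $\bE\log w_a(t,0)$ is superadditive in $t$ up to a bounded error coming from the spatial shift, which is handled by stationarity. Hence $\bar\lambda:=\lim_t t^{-1}\bE\log w_a(t,0)$ exists. Step 1 with $r=\eps t$ and Borel--Cantelli along integer times (plus an interpolation between integers) shows that $t^{-1}\log w_a(t,0)\to\bar\lambda$ almost surely. Therefore $\bar\lambda=\lambda(a)$ as given in \eqref{eq:wa-Lyap}. In particular, $t^{-1}\bE F\le\lambda(a)+\tfrac1{200}$ for all large $t$.

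\emph{Step 3 (conclusion).} For such $t$,
\[
\bP\bigl(w_a(t,x)>\e^{(\lambda(a)+\frac1{100})t}\bigr)\le\bP\bigl(F-\bE F>\tfrac{t}{200}\bigr)\le 2\e^{-t/(2\cdot200^2)}.
\]
This yields \eqref{eq:LDP1} with any $c_{\cE1}<(2\cdot 200^2)^{-1}$ once $t$ is large.

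The main obstacle I expect is Step 1 in the infinite-dimensional setting: justifying the Lipschitz bound and the passage to the limit from the regularized noise. The integrand $\xi(\d s,\cdot)$ evaluated along a random path is only defined as a stochastic integral, so I would work with the smoothed noise, where $F$ is a genuine smooth function of finitely many Gaussians. There the gradient bound $|\nabla F|\le\sqrt{t\,\Theta(0)}$ is explicit, and I would rely on $L^1$ convergence of $F$ to transfer the concentration inequality. The regularity assumption $\Theta\in C^\beta$ with $\beta>1$ ensures that the approximating covariances converge uniformly.
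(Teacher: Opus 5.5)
Your proof is correct, but it takes a different route from the paper. The paper gives no argument for this lemma; it simply imports it from Cranston--Mountford \cite[Theorem 3.5]{CM2006lyapunov}.

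You instead prove it directly by Gaussian concentration of the free energy $F=\log w_a(t,0)$. The Cameron--Martin gradient of $F$ is a Gibbs average of elements of norm $\sqrt{t\,\Theta(0)}=\sqrt t$, so $\bP(F-\bE F>r)\le \e^{-r^2/(2t)}$. Translation invariance of $\Theta$ then makes the law of $w_a(t,x)$ independent of $x$.

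Your route gives a self-contained proof with an explicit rate (any $c<1/80000$) that does not depend on $a$, $d$ or $\Theta$. Only the threshold for ``$t$ sufficiently large'' depends on $a$, through the speed at which $t^{-1}\bE F\to\lambda(a)$. It also needs nothing from \eqref{eq:wa-Lyap} beyond the existence of $\lambda(a)$ as a limit in probability. The citation is shorter, but it hides the mechanism and requires matching Cranston--Mountford's normalization to $w_a$.

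Two small remarks.

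First, your Step 2 does more than is needed. Once $t^{-1}\log w_a(t,0)\to\lambda(a)$ in probability, Step 1 alone forces the deterministic quantity $t^{-1}\bE F$ to converge to $\lambda(a)$. So superadditivity and Borel--Cantelli can be dropped. If you keep superadditivity, it is exact, with no error term, by independence of the time increments and spatial stationarity.

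Second, integrability of $F^-$ does not follow from Lemma~\ref{lem:Evp}-type bounds, which only control positive moments. Use Jensen's inequality $F\ge E[\eta_a(t,0)]$ instead; the right-hand side is a centered Gaussian with variance at most $t$.

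The regularization step you flag as the main obstacle is standard and goes through as you describe.
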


  To enhance the estimation of $w_a(t,x)$ in \eqref{eq:LDP1} to an estimation of $\sup_{|x|\leq t} w_a(t,x)$, we also need a Harnack-type inequality for $w_a$.

  \const{\cE2}
  \begin{lemma}\label{lem:harnack}
  There is a constant $c_{\cE2}$ only depends on $d$ and $\Theta$ such that
    \begin{equation*}
        \bP \left(w_a(t, x) > w_a(t, y)\e^{(t|x-y|)^{1 / 3}}+\e^{t/2-c_{\cE2}(t|x-y|)^{-1 / 3}}\right) \leqslant \e^{t/2-c_{\cE2}(t|x-y|)^{-1 / 3}} .
    \end{equation*}
  \end{lemma}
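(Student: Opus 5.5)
The plan is to compare $w_a(t,x)$ and $w_a(t,y)$ path by path, using the same Brownian motion $B$ in both Feynman--Kac representations. Set
$$
D:=\eta_a(t,x)-\eta_a(t,y)=\int_0^t \Big[\xi(\d s, x+\sqrt{1/a}B_s)-\xi(\d s,y+\sqrt{1/a}B_s)\Big].
$$
Put $\lambda:=(t|x-y|)^{1/3}$ and split the event $\{D\le \lambda\}$ from its complement. This gives the pathwise bound
$$
w_a(t,x)\le \e^{\lambda}\,w_a(t,y)+Z,\qquad Z:=E\big[\exp(\eta_a(t,x))\1_{\{D>\lambda\}}\big].
$$
The lemma then reduces to showing that, for a suitable constant $c=c_{\cE2}(d,\Theta)$, one has $\bE Z\le \e^{t-2c(t|x-y|)^{-1/3}}$. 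Granting this, Markov's inequality gives
$$
\bP\big(Z>\e^{t/2-c(t|x-y|)^{-1/3}}\big)\le \e^{t/2-c(t|x-y|)^{-1/3}},
$$
and on the complementary event the inequality inside the probability of the lemma fails, which is the claim.

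\textbf{Step 1 (variance of $D$).} For a fixed Brownian path, $D$ is centered Gaussian under $\bP$. Its variance is
$$
\sigma^2=\int_0^t 2\big(1-\Theta(x-y)\big)\,\d s=2t\big(1-\Theta(x-y)\big),
$$
since $\Theta(0)=1$ and the spatial shift $\sqrt{1/a}B_s$ is common to both terms. I will use that $\Theta$ is a covariance function. Hence it is even with maximum $1$ at $0$, so $\nabla\Theta(0)=0$. Together with $\Theta\in C^\beta$ for $\beta>1$, this gives $1-\Theta(h)\le C|h|^{\beta}\le C|h|$ for $|h|\le1$. For $|h|>1$ one has trivially $1-\Theta(h)\le 2\le 2|h|$. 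Therefore $\sigma^2\le C\,t|x-y|$, uniformly in the path and in $a$.

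\textbf{Step 2 (bounding $\bE Z$).} By Fubini and Cauchy--Schwarz in $\bP$ for each fixed path,
$$
\bE Z\le E\Big[\big(\bE\,\e^{2\eta_a(t,x)}\big)^{1/2}\,\bP(D>\lambda)^{1/2}\Big].
$$
Given $B$, $\eta_a(t,x)$ is Gaussian with variance $t\Theta(0)=t$, so $\bE\,\e^{2\eta_a(t,x)}=\e^{2t}$. The Gaussian tail bound gives $\bP(D>\lambda)\le \e^{-\lambda^2/(2\sigma^2)}$. Combining these,
$$
\bE Z\le \e^{t-\lambda^2/(4\sigma^2)}\le \e^{\,t-(t|x-y|)^{2/3}/(4Ct|x-y|)}=\e^{\,t-2c(t|x-y|)^{-1/3}},
$$
with $c=1/(8C)$. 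This is exactly the exponent required above, so Markov's inequality closes the argument.

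The main obstacle is Step 1, specifically the requirement that $1-\Theta(h)=O(|h|)$ uniformly on all of $\mR^d$. This is where $\beta>1$ and the positive-definiteness of $\Theta$ (forcing a critical point at $0$) enter. A secondary point is that $D$ and $\eta_a$ are Gaussian only conditionally on $B$. The order of integration (first $\bP$, then $E$) must therefore be kept as above, and the measurability of the joint stochastic integrals is handled by the same construction used for \eqref{eq:SFK}. When $t|x-y|$ is small, the right-hand side of the lemma is at least $1$, so the statement is trivial there and no separate case analysis is needed.
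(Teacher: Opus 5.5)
Your proof is correct and is essentially the paper's argument: you split $w_a(t,x)$ according to whether $\eta_a(t,x)-\eta_a(t,y)$ exceeds $(t|x-y|)^{1/3}$, use the conditional Gaussian tail with variance $2t(1-\Theta(x-y))\le Ct|x-y|$, apply Cauchy--Schwarz with $\bE\,\e^{2\eta_a(t,x)}=\e^{2t}$, and finish with Markov's inequality; your one-sided event and your variance bound valid for all $x,y$ (the paper only treats $|x-y|\ll 1$) are harmless refinements. One slip: your closing remark has the direction reversed, since the right-hand side is $\ge 1$ when $t|x-y|$ is \emph{large} and tiny when it is small, but this does not matter because your argument is already uniform in $x,y$.
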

  \begin{proof}
    Let
      $$
        A:= \l\{ \l|\eta_a(t,x)-\eta_a(t,y)\r| >(t|x-y|)^{\frac{1}{3
        }} \r\}.
      $$
      Recall that $\Theta(0)=1$. Given
	  \(\{ B_s, s\geq 0\}\), $\eta_a(t,x)-\eta_a(t,y)$ is a Gaussian random variable with mean zero and variance
      $$
        \bE |\eta_a(t,x)-\eta_a(t,y)|^2= 2 t(1-\Theta(x-y)) \leq C t |x-y|, \quad \forall |x-y|\ll 1.
      $$
      Thus, there exists $c_{\cE2}=c_2(d, \Theta)>0$ such that
      $$
        \bP \l( A\r) \leq \e^{-4c_{\cE2}(t|x-y|)^{-1/3}},  \quad \forall |x-y|\ll 1.
      $$
      This yields
      \begin{equation}\label{eq:PP}
          \begin{aligned}
           \bE E \left[\e^{\eta_a(t,x)} \1_A\right] \leq& E \l[ \l(\bE  \e^{2\eta_a(t,x)} \r)^{\frac{1}{2}} \bP^{\frac{1}{2}} (A)\r]
          \leq \l(E \bE  \e^{2\eta_a(t,x)} \r)^{\frac{1}{2}} \e^{-2c_{\cE2}(t|x-y|)^{-1/3}}.
      \end{aligned}
      \end{equation}
      Given $\{ B_s, s\geq 0\}$, $2\eta_a(t,x)$ is a Gaussian random variable with mean zero and variance $\bE  |2\eta_a(t,x)|^2 = 4t$, so $\bE E \left[\e^{2\eta_a(t,x)}\right] = E \bE \left[\e^{2\eta_a(t,x)}\right]\leq \e^{2t}$. Combining this with Chebyshev's inequality and \eqref{eq:PP}, we obtain
      \begin{equation*}
          \begin{aligned}
            \bP \l( E \left[\e^{\eta(t,x)} \1_A\right] > \e^{t/2-c_{\cE2}(t|x-y|)^{-1/3}} \r)
			&\leq \bE E \left[\e^{\eta_a(t,x)} \1_A\right] \e^{-\frac{t}{2}+c_{\cE2}(t|x-y|)^{-1/3}}\\
            &\overset{\eqref{eq:PP}}{\leq} \e^{t/2-c_{\cE2}(t|x-y|)^{-1/3}}.
          \end{aligned}
      \end{equation*}
      Therefore, with probability at least $1- \e^{t/2-c_{\cE2}(t|x-y|)^{-1/3}}$,
      \begin{equation*}
          \begin{aligned}
            w_a(t, x) & =E \left[\e^{\eta_a(t,x)} \1_A\right]+E\left[\e^{\eta_a(t, x)} \1_{A^c}\right]
            \leq \e^{t/2-c_{\cE2}(t|x-y|)^{-1/3}} +E\left[ \e^{\eta_a(t,y)} \right]\e^{(t|x-y|)^{\frac{1}{3}}} \\
            & \leq \e^{t/2-c_{\cE2}(t|x-y|)^{-1/3}} +
           w_a(t,y)\e^{(t|x-y|)^{\frac{1}{3}}}.
          \end{aligned}
      \end{equation*}
  So we complete the proof.
  \end{proof}

  A chaining argument and Lemma \ref{lem:prob-wa} give us Lemma \ref{lem:LDP}.

  \const{\cEE}
  \begin{proof}[Proof for Lemma \ref{lem:LDP}]
  Set $D_i:= 2^{-i}t^{-10} \mZ^d \cap [-t,t]^d$, $i\in \mN$. Let
  $$
    A_0:= \l\{w_a(t,x)\leq \e^{(\lambda(a)+\frac{1}{100})t}, ~ x\in D_0\r\},
  $$
  and
  \begin{equation*}
    \begin{aligned}
    A_i:= & \Big\{\forall(x, y) \in\left(D_i \backslash D_{i-1}\right) \times D_{i-1},|x-y|=t^{-10} 2^{-i}, \\
    & w_a(t, x) \leqslant w_a(t, y) \e^{(t|x-y|)^{1 / 3}}+\e^{t/2-c_{\cE2}
    (t|x-y|)^{-1 / 3}}\Big\} .
    \end{aligned}
  \end{equation*}
  By Lemma \ref{lem:prob-wa}, we get
  \begin{equation*}
      \bP (A_0^c) \leq |D_0| \e^{-c_1 t}\leq t^{100 d}\e^{-c_1 t}.
  \end{equation*}
  We also have
  \begin{equation*}
      \bP (A_i^c) \leq 2^{i+1} |D_0| \exp \l( \frac{t}{2}-c_2t^{3}2^{i/3}\r)~ (i=1,2,\cdots),
  \end{equation*}
  due to Lemma \ref{lem:harnack}.
  Thus, there exists a constant \(c_{\cEE}>0\), depending only on \(d\) and \(\Theta\), such that
  \begin{equation}\label{eq:probA}
      \bP \l(\bigcup_{i=0}^\infty A_i^c\r)\leq C t^{100d} \l(\e^{-c_1 t}+ \e^{t/2} \sum_{i=1}^\infty 2^i \exp\l(- c_2 t^3 2^{i/3}\r)\r) \leq \e^{-c_{\cEE}t}.
  \end{equation}
  On $\bigcap_{i=0}^\infty A_i$, for each $|x|\leq t$, there is a sequence $x_i\in D_i$ such that $x_i\to x$ and $|x_i-x_{i-1}|\leq \sqrt{d}2^{-i} t^{-10}$, and
  \begin{equation}\label{eq:iterate}
      w_a(t,x_0)\leq \e^{(\lambda(a)+\frac{1}{100})t}~ \mbox{ and }~  w_a(t,x_i) \leq w_a(t,x_{i-1})\e^{t^{-3} 2^{-i/3}}+\e^{t/2- c_2 t^3 2^{i/3}}.
  \end{equation}
  Iterating \eqref{eq:iterate}, on $\bigcap_{i=0}^\infty A_i$, for sufficiently large $t$ and each $|x|\leq t$, it holds that
  \begin{equation*}
    \begin{aligned}
    w_a(t,x) &=\lim_{x_i\to x} w_a(t,x) \leq \lim_{n \to \infty} \left( w_a(t, x_0) \prod_{j=1}^n \e^{t^{-3} 2^{-j/3}} + \sum_{j=1}^n \e^{\frac{t}{2} - c_2 t^3 2^{j/3}} \prod_{k=j+1}^n \e^{t^{-3} 2^{-k/3}} \right) \\
    &\leqslant \e^{(\lambda(a)+\frac{1}{100})t +t^{-3} \sum_{j=1}^\infty 2^{-j / 3}}+\e^{t/2} \sum_{j=1}^\infty \e^{-c_2 t^{3} 2^{j / 3}+t^{-3} \sum_{k=j+1}^\infty 2^{-k / 3}} \\
    & \leq \e^{(\lambda(a)+\frac{1}{50})t} + \e^{t / 2} \sum_{j=1}^\infty \e^{-c_2 t^3 2^{j / 3}+C t^{-3}}
     \leq \e^{(\lambda(a)+\frac{1}{10})t}.
    \end{aligned}
  \end{equation*}
Therefore,
  \begin{equation}\label{eq:LDP-wa}
      \bP \l(\sup_{|x|\leq t} w_a(t,x) > \e^{(\lambda(a)+\frac{1}{10})t} \r) \leq \bP \l(\bigcup_{i=0}^\infty A_i^c\r) \overset{\eqref{eq:probA}}{\leq} \e^{-c_{\cEE} t}.
  \end{equation}
  Noting that $a\geq N_0\geq 1$, and utilizing \eqref{eq:v=w}, \eqref{eq:wa-Lyap} and \eqref{eq:LDP-wa}, we get
  \begin{align*}
      \bP \l(\sup_{|x|\leq t} v(t,x)> \e^{-\frac{a t}{3}}\r) =& \bP \l(\sup_{|x|\leq t} \widetilde{v}(t,x)> \e^{\frac{a t}{6}}\r) \overset{\eqref{eq:v=w}}{=} \bP \l(\sup_{|x|\leq t} w_a(at,x)> \e^{\frac{a t}{6}}\r) \\
      \overset{\eqref{eq:wa-Lyap}}{\leq} &\bP \l(\sup_{|x|\leq at} w_a(at,x)> \e^{(\lambda(a)+\frac{1}{10}) at}\r) \overset{\eqref{eq:LDP-wa}}{\leq} \e^{-c a t}.
  \end{align*}
So we complete the proof.
      \end{proof}

\end{document}